\numberwithin{equation}{section}
\newtheorem{corollary}{Corollary}[section]
\newtheorem{definition}[corollary]{Definition}
\newtheorem{lemma}[corollary]{Lemma}
\newtheorem{remark}[corollary]{Remark}
\newtheorem{theorem}[corollary]{Theorem}
\newfont{\sBlackboard}{msbm10 scaled 900}
\newcommand{\mylabel}[1]{\label{#1}
            \ifx\undefined\stillediting
            \else \fbox{$#1$}\fi }
\newcommand{\BE}{\begin{equation}}
\newcommand{\EEQ}{\end{equation}}
\newcommand{\rfb}[1]{\mbox{\rm
   (\ref{#1})}\ifx\undefined\stillediting\else:\fbox{$#1$}\fi}
\newfont{\Blackboard}{msbm10 scaled 1200}
\newfont{\roma}{cmr10 scaled 1200}
\def\CC{\rm \hbox{C\kern-.56em\raise.4ex
         \hbox{$\scriptscriptstyle |$}\kern+0.5 em }}
\def\n{|\kern -.05cm{|}\kern -.05cm{|}}
\def\R{{ \hbox{\sc I\hskip -2pt R}}} %rï¿½els
\def\Z{{ Z}} %entiers relatifs
\def \noame{\noalign{\medskip}}
\newcommand{\mm}    {{\hbox{\hskip 0.5pt}}}
\newcommand{\bluff} {{\hbox{\raise 15pt \hbox{\mm}}}}
\newcommand{\ep}   {\varepsilon}
\newcommand{\eps}   {\ep}
\def\section{\@startsection {section}{1}{\z@}{-3.5ex plus -1ex minus
    -.2ex}{2.3ex plus .2ex}{\bf \large}}
\def\be{\begin{equation}}
\def\ee{\end{equation}}
\date{ }
\begin{document}
\thispagestyle{empty}
\title{\Large    Modeling Carreau fluid flows through a very thin porous medium}\maketitle
\vspace{-2cm}
\begin{center}
Mar\'ia ANGUIANO\footnote{Departamento de An\'alisis Matem\'atico. Facultad de Matem\'aticas. Universidad de Sevilla. 41012-Sevilla (Spain) anguiano@us.es}, Matthieu BONNIVARD\footnote{Centrale Lyon, CNRS, INSA Lyon, Universite Claude Bernard Lyon 1, Université Jean Monnet, ICJ UMR5208, 69130 Ecully,
	France.} and Francisco Javier SU\'AREZ-GRAU\footnote{Departamento de Ecuaciones Diferenciales y An\'alisis Num\'erico. Facultad de Matem\'aticas. Universidad de Sevilla. 41012-Sevilla (Spain) fjsgrau@us.es}
 \end{center}

 \renewcommand{\abstractname} { Abstract}
\begin{abstract}
  This study investigates three-dimensional, steady-state, and non-Newtonian flows within a very thin porous medium (VTPM). The medium is modeled as a domain confined between two parallel plates and perforated by solid cylinders that connect the plates and are distributed periodically in perpendicular directions. We denote the order of magnitude of the thickness of the domain by $\eps$ and  define the period and order of magnitude of the cylinders' diameter by $\eps^\ell$, where $0<\ell<1$ is fixed. In other words, we consider the regime $\eps\ll\ep^\ell$. We assume that the viscosity of the non-Newtonian fluid follows Carreau's law and is scaled by a factor of $\eps^\gamma$, where $\gamma$ is a real number.  Using asymptotic techniques with respect to the thickness of the domain, we perform a new, complete study of the asymptotic behaviour of the fluid as $\eps$ tends to zero. Our mathematical analysis is based on deriving sharp a priori estimates through pressure decomposition, and on compactness results for the rescaled velocity and pressure, obtained using the unfolding method. Depending on $\gamma$ and the flow index $r$,  we rigorously derive different linear and nonlinear reduced limit systems. These systems allow us to obtain explicit expressions for the filtration velocity and simpler Darcy's laws for limit pressure.
  
  \end{abstract}
\bigskip\noindent

\noindent {\small  AMS classification numbers: 35B37, 76M50.}  \\

\noindent {\small  Keywords: Non-Newtonian fluid, Carreau's law, very thin porous medium, homogenization.}
\ \\
\ \\
\section {Introduction}\label{S1}

This paper discusses the asymptotic behaviour of the flow of a non-Newtonian fluid, whose viscosity follows Carreau's law (see Definition~\eqref{Carreaulaw}), through a very thin  porous medium. This terminology was proposed by Fabricius et al.\!~\cite{Fabricius} in the context of an incompressible Newtonian fluid flow through a thin porous medium consisting of a domain confined between two parallel plates and perforated by solid cylinders that connect the plates and are periodically distributed in perpendicular directions. If we denote by $\eps$ the order of magnitude of the thickness of the domain, and by $\ep^\ell$ (where $\ell>0$ is fixed) the period and the order of magnitude of the cylinders' diameter, three different regimes were identified in~\cite{Fabricius}, depending on the relation between the height of the domain and the size of the rigid inclusions.

\begin{itemize}
	
	\item {\bf Homogeneously thin porous media (HTPM)} correspond to the case where the cylinder height is much larger than the interspatial distance, i.e., $\ep^\ell \ll \ep$, which is equivalent to $\ell>1$.

	\item {\bf Very thin porous media (VTPM)}  correspond to the opposite case where the cylinder height is much smaller than the interspatial distance, i.e., $\ep^\ell \gg \ep$, or equivalently, $0<\ell<1$.

	\item {\bf Proportionally thin porous media (PTPM)} correspond to the critical case where the cylinder height is proportional to the interspatial distance, i.e., $\ep^\ell= \ep$ and $\ell=1$.

\end{itemize}

In the case of Newtonian fluid flow described by Stokes or stationary Navier-Stokes equations, all regimes were analyzed in~\cite{Fabricius}, where the authors proved that the flow is governed by a two-dimensional Darcy equation, different for each regime, using the asymptotic expansion method. A more rigorous approach was later proposed by Anguiano and Su\'arez-Grau~\cite{Anguiano_SG_Transition}, based on the unfolding method introduced by Cioranescu et al.\!~\cite{Ciora2,Cioran-book}. The studies mentioned above are in line with a large number of contributions concerning the derivation of Darcy's law from hydrodynamic equations for Newtonian flows through periodic porous media. We refer to~\cite{Allaire, Lipton, SanchezPalencia, Tartar} for derivations using homogenization, and to the book by Hornung~\cite{HornungBook} for various physical aspects and mathematical results around this topic.

 In the case of non-Newtonian fluids, the derivation of Darcy's laws in limit models is even more challenging because viscosity is now a nonlinear function of the symmetrized gradient of the velocity field. In the case where the viscosity follows a power law, we refer to Anguiano and Su\'arez-Grau \cite{Anguiano_SG}, where the same critical regime is identified and the three regimes described above are analyzed, leading to nonlinear 2D Darcy models in the limit. In the case of a quasi-Newtonian fluid whose viscosity is given by the Carreau law, Bourgeat and Mikeli\'c~\cite{Bourgeat1} (see also~\cite{Bourgeat2} and~\cite{Kalousek}) used the two-scale convergence method to derive the averaged law describing the flow in a periodic porous medium of fixed height. In the case of a thin porous medium, a complete study of the regime PTPM ($\ell=1$) has been recently conducted by the authors in~\cite{Anguiano_Bonn_SG_Carreau1, Anguiano_Bonn_SG_Carreau2}, where the viscosity is scaled by a factor of $\ep^\gamma$, $\gamma\in \mathbb{R}$.

Now, let us define the VTPM model and the associated notation precisely before stating the equations of motion for the fluid.

\subsection{Definition of the very thin porous medium}\label{sec:definition}

Let $\eps$ be a sequence of positive numbers that converges to zero. The geometry of a very thin porous medium is characterized by the presence of two different microscales:
\begin{itemize}
	\item the scale $\ep$ related to the film thickness,
	\item the scale $\ep^\ell$ related to the diameter of the cylindrical obstacles, and their periodic distribution in the horizontal directions.
\end{itemize}
In the case of a VTPM, we assume that $\ep^\ell$ is of order greater than $\ep$, i.e., $0<\ell<1$, which implies that
\begin{equation}\label{RelationAlpha}
\lim_{\ep\to 0}\ep^{1-\ell}=0.
\end{equation}

We denote by $\Omega_\eps$ the porous medium, that takes the form
\begin{equation}\label{Def:Omegaeps}
\Omega_\ep := \omega_\ep\times (0,\eps)
\end{equation}
where $\omega_\ep$ is a bounded, connected open subset of $\R^2$, associated to a microstructure at scale $\ep^\ell$.

The microstructure is described by a periodic cell $Z^{\prime}=(-1/2,1/2)^2$, which is made of two complementary parts: the fluid part $Z^{\prime}_{f}$, and the solid part $T^{\prime}$ ($Z^{\prime}_f  \cup \overline{T^{\prime}}=Z^\prime$ and $Z^{\prime}_f  \cap T^{\prime}=\emptyset$), see Figure \ref{fig:cell}. We assume that $T^{\prime}$ is an open connected subset of $Z^\prime$, such that $\overline T^\prime$ is strictly included  in $Z^\prime$. We also assume that $T^\prime$ is a Lipschitz domain, with uniform $C^2$ regularity in the sense of Definition~\ref{Def:C2domain}.

We introduce a bounded, connected open set $\omega\subset\R^2$, that we also assume to be uniformly $C^2$. The exterior normal to $\partial\omega$ is denoted by $n$.
 The domain $\omega$ is covered by a regular mesh of squares of size ${\varepsilon^\ell}$: for $k^{\prime}\in \mathbb{Z}^2$, each cell $Z^{\prime}_{k^{\prime},{\varepsilon^\ell}}={\varepsilon^\ell}k^{\prime}+{\varepsilon^\ell}Z^{\prime}$ is divided into a fluid part $Z^{\prime}_{f_{k^{\prime}},{\varepsilon^\ell}}$ and a solid part $T^{\prime}_{k^{\prime},{\varepsilon^\ell}}$, i.e., is similar to the unit cell $Z^{\prime}$ rescaled to size ${\varepsilon}^\ell$.

 We set $Z=Z^{\prime}\times (0,1)\subset \mathbb{R}^3$, which is divided into a fluid part $Z_{f}=Z'_f\times (0,1)$ and solid part $T=T'\times(0,1)$. Consequently $Z_{k^{\prime},{\varepsilon^\ell}}=Z^{\prime}_{k^{\prime},{\varepsilon^\ell}}\times (0,1)\subset \mathbb{R}^3$  is also divided into a fluid part $Z_{f_{k^{\prime}},{\varepsilon^\ell}}$ and a solid part $T_{{k^{\prime}},{\varepsilon^\ell}}$, see Figures \ref{fig:cell} and \ref{fig:cellep}.

We denote by $\tau(\overline T'_{k',\varepsilon^\ell})$ the set of all translated images of $\overline T'_{k',\varepsilon^\ell}$, for $k'\in \Z^2$. The set $\tau(\overline T'_{k',\varepsilon^\ell})$ represents the obstacles projected in $\mathbb{R}^2$.

Based on Definition~\eqref{Def:Omegaeps}, the bottom $\omega_{\ep}$ of the porous medium is defined as
\[
\omega_{\ep}=\omega\backslash\bigcup_{k^{\prime}\in \mathcal{K}_{\varepsilon}} \overline T^{\prime}_{{k^{\prime}},{\varepsilon^\ell}},
\] 
where $\mathcal{K}_{\varepsilon}=\{k^{\prime}\in \mathbb{Z}^2: Z^{\prime}_{k^{\prime}, {\varepsilon^\ell}} \cap \omega \neq \emptyset \}$. The associated domain $\Omega_\eps$ is shown in Fig.~\ref{fig:omep}.

We assume that for every $k'\in \mathcal K_\eps$, the obstacle $\overline T'_{k',\varepsilon^\ell}$ is at a distance of order $O(\ep^\ell)$ from the boundary $\partial\omega$. More specifically, we assume the following:
\begin{equation}\label{Assumption:distanceBoundary}
\forall \eps>0,	\forall k'\in \mathcal K_\ep\quad \mathrm{dist}(\overline T'_{k',\varepsilon^\ell},\partial\omega) \geq \ep^\ell k_0.
\end{equation}
In the above statement, constant $k_0>0$ is introduced in the proof of Lemma~\ref{Lemma:extensionforpressure}. This is a technical assumption that guarantees the existence of the extension operator $E_\eps$ introduced in the previously mentioned lemma.

We also introduce the rescaled porous domain $\widetilde{\Omega}_\ep$, limit domain $\Omega$ and thin layer $Q_\eps$ defined by
\begin{equation}\label{OmegaTilde}
\widetilde{\Omega}_{\varepsilon}=\omega_{\varepsilon}\times (0,1), \quad \Omega=\omega\times (0,1), \quad Q_\varepsilon=\omega\times (0,\varepsilon).
\end{equation}
We observe that $\widetilde{\Omega}_{\varepsilon}=\Omega\backslash \bigcup_{k^{\prime}\in \mathcal{K}_{\varepsilon}} \overline T_{{k^{\prime}}, {\varepsilon^\ell}},$ and we define $T_\varepsilon=\bigcup_{k^{\prime}\in \mathcal{K}_{\varepsilon}} T_{k^\prime, \varepsilon^\ell}$ as the set of the solid cylinders contained in $\widetilde \Omega_\varepsilon$. 

We finally consider the top and bottom boundaries
\begin{equation}\label{Def:top_bottom_boundaries}
\Gamma_0=\omega\times \{0\}, \quad \Gamma_1=\omega\times \{1\},\quad \widehat \Gamma_0=  Z'\times \{0\},\quad \widehat \Gamma_1=  Z'\times \{1\}.
\end{equation}

\begin{figure}[h!]
\begin{center}
\includegraphics[width=4cm]{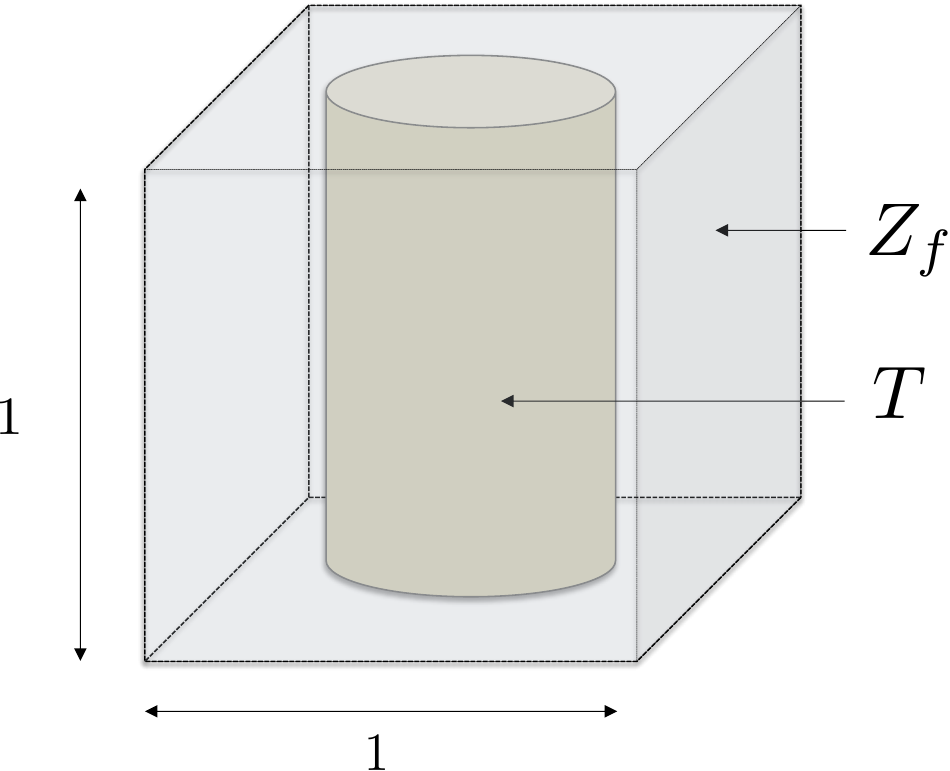}
\hspace{2.5cm}
\raisebox{.1\height}{\includegraphics[width=3cm]{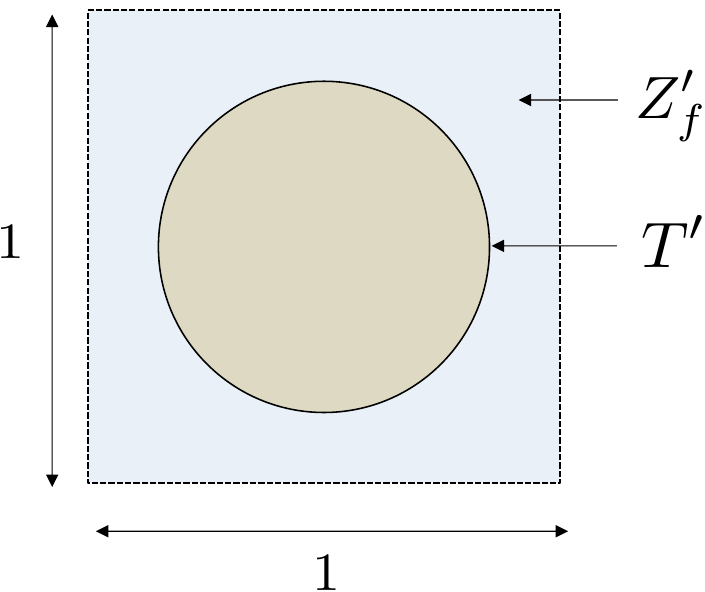}}
\end{center}
\vspace{-0.4cm}
\caption{View of the 3D reference cell  $Z$ (left) and the 2D reference cell $Z'$ (right).}
\label{fig:cell}
\end{figure}

\begin{figure}[h!]
\begin{center}
\includegraphics[width=5cm]{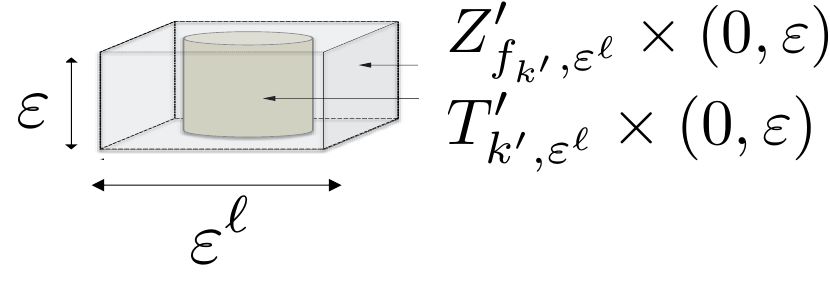}
\hspace{2.5cm}
\raisebox{.1\height}{\includegraphics[width=3cm]{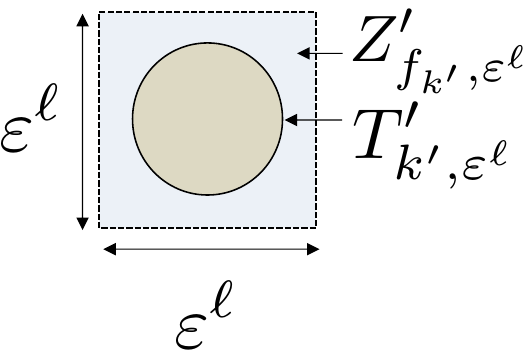}}
\end{center}
\vspace{-0.4cm}
\caption{View of the 3D reference cell  $Z'_{k',\varepsilon^\ell}\times (0,\varepsilon)$ (left) and the 2D reference cell $Z'_{k',\varepsilon^\ell}$ (right).}
\label{fig:cellep}
\end{figure}

\begin{figure}[h!]
\begin{center}
\includegraphics[width=12cm]{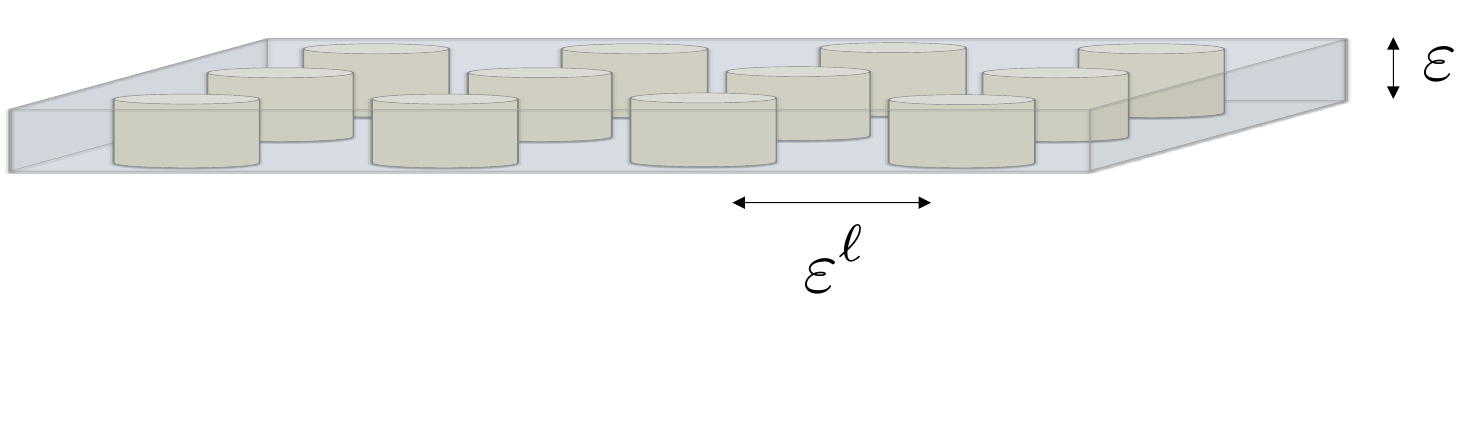}
\end{center}
\vspace{-1.4cm}
\caption{View of the thin porous media $\Omega_\varepsilon$.}
\label{fig:omep}
\end{figure}

\subsection{Additional notations}

In this section, we present some additional notations that will be useful throughout this paper.

The points $x\in\mathbb{R}^3$ are decomposed as $x=(x^{\prime},x_3)$ with $x^{\prime}=(x_1,x_2)\in \mathbb{R}^2$, $x_3\in \mathbb{R}$. We also use the notation $x^{\prime}$ to denote a generic vector of $\mathbb{R}^2$.

We note $B'(x',r)$ the open ball of center $x'$ and radius $r>0$ in $\R^2$.

The symbol $:$ denotes the full contraction of two matrices, that is, for $A=(a_{ij})_{1\leq i,j\leq 3}$ and $B=(a_{ij})_{1\leq i,j\leq 3}$, we have $A:B=\sum_{i,j=1}^3a_{ij}b_{ij}$.

\paragraph{Differential operators}

Let us consider a vectorial function ${  v} =( {  v} ', v_{3})$ with ${  v}'=(v_1, v_2)$ and a scalar function $\varphi$ , both defined in $\Omega_\varepsilon$. Then, we introduce operators $\Delta$ and $\nabla$ defined by
$$\begin{array}{c}
 \displaystyle \Delta{  v}=\Delta_{x'}{  v} +\partial_{x_3}^2 {  v},\quad {\rm div}({  v} )={\rm div}_{x'}({  v}')+\partial_{x_3}v_3,\quad \nabla \varphi=(\nabla_{x'} \varphi,  \partial_{x_3}\varphi)^t,\\
 \end{array}$$
and
$\mathbb{D}:\mathbb{R}^3\to \mathbb{R}^3_{\rm sym}$   the symmetric part of the velocity gradient, that is
$$\mathbb{D}[{  v}]={1\over 2}(D{  v}+(D{  v})^t)=\left(\begin{array}{ccc}
\partial_{x_1}v_1 &   {1\over 2}(\partial_{x_1}v_2 + \partial_{x_2}v_1) &   {1\over 2}(\partial_{x_3}v_1 + \partial_{x_1}v_3)\\
\noame
 {1\over 2}(\partial_{x_1}v_2 + \partial_{x_2}v_1) & \partial_{x_2}v_2 &   {1\over 2}(\partial_{x_3}v_2 + \partial_{x_2}v_3)\\
 \noame
 {1\over 2}(\partial_{x_3}v_1 + \partial_{x_1}v_3)&   {1\over 2}(\partial_{x_3}v_2 + \partial_{x_2}v_3)& \partial_{x_3}v_3
\end{array}\right).$$

\noindent Moreover, for a vectorial function $\widetilde {   v} =(\widetilde {  v}', \widetilde  v_{3})$ and a scalar function $\widetilde \varphi$, both defined in $\widetilde\Omega_\ep$, obtained from ${ v}$ and $\varphi$ after a dilatation in the vertical variable (i.e., $z_3=x_3/\ep$), we will use the following operators:
 $$\begin{array}{c}
 \displaystyle \Delta_{\varepsilon}  \widetilde  {  v} =\Delta_{x'} \widetilde  {  v} + \varepsilon^{-2}\partial_{z_3}^2 \widetilde  {  v} ,\quad   \displaystyle\Delta_{ \varepsilon}\widetilde \varphi=\Delta_{x'}\widetilde \varphi+ \ep^{-2}\partial^2_{z_3}\widetilde \varphi,\\
  \noame
  (D_{\ep}\widetilde {  v})_{ij}=\partial_{x_j}\widetilde {v}_i\ \hbox{ for }\ i=1,2,3,\ j=1,2,\quad   (D_{\ep}\widetilde {  v})_{i3}=\ep^{-1}\partial_{z_3}\widetilde {v}_i \ \hbox{ for }\ i=1,2,3,\\
  \noame
  \nabla_{\ep}\widetilde\varphi=(\nabla_{x'}\widetilde \varphi, \ep^{-1}\partial_{z_3}\widetilde\varphi)^t,\quad {\rm div}_{\varepsilon}(\widetilde  {    v})={\rm div}_{x'}(\widetilde {  v}')+\ep^{-1}{\partial_{z_3}}\widetilde  v_{3},
  \end{array}$$
 Moreover, we define $\mathbb{D}_{\ep}[\widetilde  { v}]$ as follows
 $$\mathbb{D}_{\ep}[\widetilde  { v}]=\mathbb{D}_{x'}[\widetilde  { v}]+\ep^{-1}\partial_{z_3}[\widetilde  { v}]=\left(\begin{array}{ccc}
\partial_{x_1}\widetilde  v_1 &   {1\over 2}(\partial_{x_1}\widetilde  v_2 + \partial_{x_2}\widetilde  v_1) &   {1\over 2} (\partial_{x_1}\widetilde  v_3+_\ep^{-1}\partial_{z_3}\widetilde  v_1)\\
\noame
 {1\over 2}(\partial_{x_1}\widetilde  v_2 + \partial_{x_2}\widetilde  v_1) & \partial_{x_2}\widetilde  v_2 &   {1\over 2} (\partial_{x_2}\widetilde  v_3+_\ep^{-1}\partial_{z_3}\widetilde  v_2)\\
 \noame
 {1\over 2} (\partial_{x_1}\widetilde  v_3+\ep^{-1}\partial_{z_3}\widetilde  v_1)&   {1\over 2} (\partial_{x_2}\widetilde  v_3+\ep^{-1}\partial_{z_3}\widetilde  v_2)& \ep^{-1}\partial_{z_3}\widetilde  v_3
\end{array}\right),$$
 where $\mathbb{D}_{x'}[\widetilde  { v}]$ and $\partial_{z_3}[\widetilde  { v}]$ are defined by
\begin{equation}\label{def_der_sym_1}
  \mathbb{D}_{x'}[{  v}]=\left(\begin{array}{ccc}
\partial_{x_1}v_1 &   {1\over 2}(\partial_{x_1}v_2 + \partial_{x_2}v_1) &   {1\over 2} \partial_{x_1}v_3\\
\noame
 {1\over 2}(\partial_{x_1}v_2 + \partial_{x_2}v_1) & \partial_{x_2}v_2 &   {1\over 2} \partial_{x_2}v_3\\
 \noame
 {1\over 2} \partial_{x_1}v_3&   {1\over 2} \partial_{x_2}v_3& 0
\end{array}\right),
 \  \partial_{z_3}[{ v}]=\left(\begin{array}{ccc}
0 &   0&   {1\over 2} \partial_{z_3}v_1\\
\noame
 0& 0 &   {1\over 2} \partial_{z_3}v_2\\
 \noame
 {1\over 2} \partial_{z_3}v_1&   {1\over 2} \partial_{z_3}v_2& \partial_{z_3}v_3
\end{array}\right).
\end{equation}

\noindent We also define the following operators applied to ${ v}'$:
\begin{equation}\label{def_der_sym_2}
  \mathbb{D}_{x'}[{  v}']=\left(\begin{array}{ccc}
\partial_{x_1}v_1 &   {1\over 2}(\partial_{x_1}v_2 + \partial_{x_2}v_1) &   0\\
\noame
 {1\over 2}(\partial_{x_1}v_2 + \partial_{x_2}v_1) & \partial_{x_2}v_2 &  0\\
 \noame
0&   0& 0
\end{array}\right),
 \quad \partial_{z_3}[{  v}']=\left(\begin{array}{ccc}
0 &   0&   {1\over 2} \partial_{z_3}v_1\\
\noame
 0& 0 &   {1\over 2} \partial_{z_3}v_2\\
 \noame
 {1\over 2} \partial_{z_3}v_1&   {1\over 2} \partial_{z_3}v_2& 0
\end{array}\right).
\end{equation}
We note that for vectorial functions $v$ and $w$, according to previous definitions, we have
\begin{equation}\label{productAB}
\mathbb{D}_{x'}[v]:\mathbb{D}_{x'}[w']=\mathbb{D}_{x'}[v']:\mathbb{D}_{x'}[w'],\quad \partial_{z_3}[v]:\partial_{z_3}[w']=\partial_{z_3}[v']:\partial_{z_3}[w'].
\end{equation}
Finally, we denote by $O_\ep$ a generic real sequence that tends to zero with $\ep$ and can change from line to line, and by $C$ a generic positive constant that can also change from line to line.

\paragraph{Functional spaces}

We define the following sets for $1<q<+\infty$. Let $C^\infty_{\#}(Z)$ be the space of infinitely differentiable functions in $\mathbb{R}^3$ that are $Z'$-periodic. By $L^q_{\#}(Z_f)$,  we denote its completion in the norm $L^q(Z_f)$ and by $L^q_{0,\#}(Z_f)$  the space of functions in $L^q_{\#}(Z_f)$ with mean value zero. Moreover, we introduce
\begin{equation}\label{Vz3}
	\begin{array}{c}
		\displaystyle
		V_{z_3}^q(\Omega)=\{\varphi\in L^q(\Omega)\ :\ \partial_{z_3}\varphi\in L^q(\Omega)\},\quad V_{z_3,\#}^q(Z_f)=\{\varphi\in L^q_\#(Z_f)\ :\ \partial_{z_3}\varphi\in L^q(Z_f)\},\\
		\noame
		\displaystyle V_{z_3,\#}^q(\omega\times Z_f)=\{\varphi\in L^q(\omega;L^q_\#(Z_f))\ :\ \partial_{z_3}\varphi\in L^q(\omega\times Z_f)\}.
	\end{array}
\end{equation}

\subsection{Model problem}\label{sec:model} With homogeneous Dirichlet boundary conditions, the flow of velocity ${ u}_\ep=({ u}'_\ep(x), u_{3,\ep}(x))$ and pressure $p_\ep=p_\ep(x),$ at a point $x\in \Omega_\ep$, is supposed to be ruled by the Stokes system
\begin{equation}\label{system_1}
\left\{\begin{array}{rl}
\displaystyle -\varepsilon^\gamma{\rm div}(\eta_r(\mathbb{D}[{  u}_\ep])\mathbb{D}[{  u}_\ep])+\nabla p_\ep={  f} & \hbox{in}\ \Omega_\varepsilon,\\
\noame
{\rm div}( {  u}_\varepsilon)=0& \hbox{in}\ \Omega_\varepsilon\\
\noame
\displaystyle {  u}_\varepsilon=0 & \hbox{on }\partial Q_\varepsilon\cup \partial S_\ep.
\end{array}\right.
\end{equation}
Here, the viscosity $\eta_r$ follows the Carreau law defined by
\begin{equation}\label{Carreaulaw}
\eta_r(\mathbb{D}[u])=(\eta_0-\eta_\infty)(1+\lambda|\mathbb{D}[u]|^2)^{{r\over 2}-1} + \eta_\infty,\quad 1<r<+\infty,\ r\neq 2,\quad \eta_0>\eta_\infty>0,\quad \lambda>0,
\end{equation}
which is scaled by a factor of $\eps^\gamma$, where $\gamma\in\mathbb{R}$ in equation~\eqref{system_1}. If $1<r<2$, the fluid is pseudoplastic; if $r>2$, it is dilatant. 

As is usual in the study of viscous fluid flows in thin domains, we assume the source term ${  f}$  to be of the form
\begin{equation}\label{fassump}
{  f} (x)=({ f}'(x'),0)\quad \hbox{with}\quad {  f}'\in L^\infty(\omega)^2.
\end{equation}
 
\begin{remark} Under previous assumptions, for every fixed positive $\varepsilon$, the classical theory (see for instance \cite{Tapiero2,Bourgeat1,Lions2}), ensures the existence of a unique weak  solution $(u_\varepsilon, p_\varepsilon)\in H^1_0(\Omega_\varepsilon)^3\times L^{2}_0(\Omega_\varepsilon)$, for $1<r< 2$, and $(u_\varepsilon, p_\varepsilon)\in W^{1,r}_0(\Omega_\varepsilon)^3\times L^{r'}_0(\Omega_\varepsilon)$ with $1/r+1/r'=1$, for $r> 2$, where  $L^{2}_0$ (respectively $L^{r'}_0$) is the space of functions of $L^2$ (respectively $L^{r'}$) with mean value zero.
\end{remark}
To study the asymptotic behaviour of the solutions ${  u}_\ep$ and $p_\ep$ when $\ep$  tends to zero,  we use the rescaling  
\begin{equation}\label{dilatacion}
z_3={x_3\over \ep}\,,
\end{equation}
 to manipulate functions defined in $\widetilde\Omega_\ep$, which is defined in (\ref{OmegaTilde}) and has a constant height equal to one. Using the change of variables (\ref{dilatacion}) in the model problem, we obtain the rescaled Stokes system:
\begin{equation}\label{system_1_dil}
\left\{\begin{array}{rl}
\displaystyle -\varepsilon^\gamma {\rm div}_\varepsilon(\eta_r(\mathbb{D}_\varepsilon[ \widetilde {  u}_\ep])\mathbb{D}_\varepsilon[\widetilde { u}_\ep])+\nabla_\varepsilon \widetilde p_\ep={ f} & \hbox{in}\ \widetilde \Omega_\varepsilon,\\
\noame
{\rm div}_\varepsilon( \widetilde { u}_\varepsilon)=0& \hbox{in}\ \widetilde \Omega_\varepsilon\\
\noame
\displaystyle \widetilde { u}_\varepsilon=0 & \hbox{on }\partial \Omega \cup \partial T_\ep,
\end{array}\right.
\end{equation}
 where the unknown functions in the above system are given by ${  \widetilde u}_\varepsilon(x',z_3)={  u}_\varepsilon(x',\varepsilon z_3)$, $ \widetilde p_\varepsilon(x',z_3)=p_\varepsilon(x',\varepsilon z_3)$  for almost every $(x',z_3)$ in $\widetilde\Omega_\varepsilon$, and the operators ${\rm div}_\ep$, $\mathbb{D}_\ep$ and $\nabla_\ep$ are defined in Section \ref{sec:definition}.

Our goal is to describe the asymptotic behaviour of this new sequence $(\widetilde{u}_{\varepsilon}, \widetilde{p}_{\varepsilon})$ when $\eps$ tends to zero, depending on the value of $\gamma$ and the flow index $r$. Since this study is rather lengthy and technical, for the sake of clarity, we focus in the next subsection on the statement of our main results.

\subsection{Main results}

Our main results consist of identifying the limit problem satisfied by the filtration velocity $\widetilde V$, defined on $\omega$ by
\begin{equation}\label{Def:filtrationVelocity}
	\widetilde V(x')=\int_0^{1}\widetilde { u}(x',z_3)\,dz_3,
\end{equation}
where $\widetilde { u}$ is the limit introduced in Lemma~\ref{lem_conv_vel}. In Theorem~\ref{expresiones_finales}, we present the results concerning pseudoplastic fluids ($1<r<2$) and in Theorem~\ref{expresiones_finalesrsup2} those concerning dilatant fluids ($r>2$).

\begin{theorem}[Case $1<r<2$]\label{expresiones_finales}
	Depending on the value of $\gamma$, filtration velocity $\widetilde V$ can be expressed as follows:
	\begin{itemize}
		\item If $\gamma=1$,  $\widetilde V$ is given by
		\begin{equation}\label{FiltrationVelcovcity}
			\widetilde V'(x')=-2\int_{Z'}\left(\int_{-{1\over 2}}^{1\over 2}{\left({1\over 2}+\xi\right)\xi\over \psi(2|\nabla_{x'}\widetilde p(x')-{ f}'(x')+\nabla_{z'}\widehat q(z')||\xi|)}d\xi\right)\left(\nabla_{x'}\widetilde p(x')-{f}'(x')+\nabla_{z'}\widehat q(z')\right)dz',
		\end{equation}
		and $\widetilde V_3\equiv 0$ in $\omega$, where $\widehat q(z')\in L^{2}_{0,\#}(Z')\cap H^1(Z')$ is given by $\widehat q_{\delta'}$ with $\delta'=\nabla_{x'}\widetilde p(x')-{ f}'(x')$, which is the unique solution of the local problem 
		\begin{equation}\label{expressionq}
			\left\{\begin{array}{l}
				\displaystyle
				{\rm div}_{z'}\left(\left(\int_{-{1\over 2}}^{1\over 2}{\left({1\over 2}+\xi\right)\xi\over \psi(2|\delta'+\nabla_{z'}\widehat q_{\delta'}(z')||\xi|)}d\xi\right)\left(\delta'+\nabla_{z'}\widehat q_{\delta'}(z')\right)
				\right)=0\quad\hbox{in }Z'_f\\
				\noame
				\displaystyle
				\left(\left(\int_{-{1\over 2}}^{1\over 2}{\left({1\over 2}+\xi\right)\xi\over \psi(2|\delta'+\nabla_{z'}\widehat q_{\delta'}(z')||\xi|)}d\xi\right)\left(\delta'+\nabla_{z'}\widehat q_{\delta'}(z')\right)
				\right)\cdot n=0\quad\hbox{on }\partial T'.
			\end{array}\right.
		\end{equation}
		Here, $\psi$ is the inverse function of 
		\begin{equation}\label{taupsi}\tau=\zeta\sqrt{{2\over \lambda}\left\{{\zeta-\eta_\infty\over\eta_0-\eta_\infty}\right\}^{2\over r-2}-1},
		\end{equation}
		which has a unique solution denoted by $\zeta=\psi(\tau)$ for $\tau\in\mathbb{R}^+$.

		Moreover, the pressure $\widetilde p\in L^{2}_0(\omega)\cap H^1(\omega)$ is the unique solution to the Darcy problem
		\begin{equation}\label{Reynolds_equation}{\rm div}_{x'}\widetilde V'(x')=0\quad\hbox{in }\omega,\quad \widetilde V'(x')\cdot n=0\quad\hbox{on }\partial\omega.
		\end{equation}

		\item  If $\gamma\neq 1$, the filtration velocity is given by
		\begin{equation}\label{FiltrationVelcovcity0}
			\widetilde V'(x')=-{1\over 6\eta}A(f'(x')-\nabla_{x'}\widetilde p(z')),\quad \widetilde V_3\equiv 0\quad\hbox{in }\omega,
		\end{equation}
		where $\eta$ is equal to $\eta_0$ if $\gamma<1$ or $\eta_\infty$ if $\gamma>1$, and 
		the symmetric and definite positive tensor $A\in\mathbb{R}^{2\times 2}$ is defined by its entries
		\begin{equation}\label{permeabilityA0}
			A_{ij}=\int_{Z'_f}(e^i+\nabla_{z'}\widehat q^i)e_j\,dz',\quad i,j=1,2.
		\end{equation}
		For $i=1,2$,  $\widehat q^i(z')$ denotes the unique solutions in $H^1_\#(Z')$ of the local Hele-Shaw problems in 2D given by  
		\begin{equation}\label{local_problems_0}
			\left\{\begin{array}{rl}
				\displaystyle
				\Delta_{z'}\widehat q^i=0 &\hbox{in }  Z'_f,\\
				\noame
				\displaystyle
				(\nabla_{z'}\widehat q^i+e_i)\cdot n=0&\hbox{on }  \partial T'.
			\end{array}\right.
		\end{equation}

		Moreover, the pressure $\widetilde p\in L^{2}_0(\omega)\cap H^1(\omega)$ is the unique solution of the Darcy problem \begin{equation}\label{Reynolds_equation0}{\rm div}_{x'}\widetilde V'(x')=0\quad\hbox{in }\omega,\quad \widetilde V'(x')\cdot n=0\quad\hbox{on }\partial\omega.
		\end{equation}
	\end{itemize}

\end{theorem}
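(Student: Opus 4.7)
The strategy is the standard unfolding homogenization for very thin porous media: combine the a priori estimates obtained through pressure decomposition with the two-scale compactness results of Lemma~\ref{lem_conv_vel} to extract weak limits of $(\widetilde u_\eps,\widetilde p_\eps)$, and then pass to the limit in the weak formulation of \eqref{system_1_dil} using suitable oscillating test functions. After the $\gamma$-dependent rescaling, $\widetilde u_\eps/\eps^{\alpha(\gamma)}$ should converge in a two-scale sense to some $\widetilde u(x',z',z_3)$ with $\widetilde u_3\equiv 0$, $\widetilde u'$ vanishing on $\partial T'$ and at $z_3\in\{0,1\}$, while the extended pressure (produced by the operator $E_\eps$ of Lemma~\ref{Lemma:extensionforpressure}) converges strongly to $\widetilde p(x')\in L^2_0(\omega)\cap H^1(\omega)$, independent of both $z'$ and $z_3$. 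The macroscopic Darcy equations \eqref{Reynolds_equation}--\eqref{Reynolds_equation0} and the no-flux condition $\widetilde V'\cdot n=0$ on $\partial\omega$ will then follow by passing to the limit in ${\rm div}_\eps \widetilde u_\eps=0$ and in the homogeneous Dirichlet condition, after averaging over $z_3$ and over $Z'_f$.

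In the case $\gamma\neq 1$, the argument of $\eta_r$ either vanishes ($\gamma<1$) or blows up ($\gamma>1$), so the Carreau law degenerates in the limit to the constant Newtonian viscosity $\eta_0$ or $\eta_\infty$, respectively. The limit momentum equation reduces, pointwise in $(x',z')$, to the linear Poiseuille ODE $-\eta\,\partial_{z_3}^2 \widetilde u' = f'(x')-\nabla_{x'}\widetilde p(x')-\nabla_{z'}\widehat q(x',z')$ on $(0,1)$ with Dirichlet endpoints. Solving this ODE explicitly yields a parabolic profile in $z_3$; integrating in $z_3$ and testing against horizontally oscillating functions $\varphi(x')\Phi(x'/\eps^\ell)$ produces the prefactor of \eqref{FiltrationVelcovcity0} together with the 2D Hele-Shaw cell problem \eqref{local_problems_0} and the permeability tensor $A$, through the cell-level incompressibility of $\widetilde V'$ in $Z'_f$ and the no-flux condition on $\partial T'$.

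For the delicate case $\gamma=1$, the nonlinearity persists in the limit. I would pass to the limit in the weak form of \eqref{system_1_dil} using test functions of the form $\varphi(x')\theta(z_3)+\eps^\ell\Phi(x',x'/\eps^\ell)\theta(z_3)$, identifying the limit of $\eta_r(\mathbb{D}_\eps\widetilde u_\eps)\mathbb{D}_\eps\widetilde u_\eps$ by a Minty-type monotonicity argument exploiting the strict monotonicity of the Carreau operator for $1<r<2$. At leading order only $\partial_{z_3}\widetilde u'$ survives in $\mathbb{D}_\eps\widetilde u_\eps$, so the limit system decouples into a pointwise-in-$x'$ scalar ODE
\begin{equation*}
-\partial_{z_3}\bigl(\eta_r(\partial_{z_3}\widetilde u')\,\partial_{z_3}\widetilde u'\bigr) = f'(x')-\nabla_{x'}\widetilde p(x')-\nabla_{z'}\widehat q(x',z')
\end{equation*}
on $(0,1)$, parametrized by $z'\in Z'_f$. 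Integrating once in $z_3$ and inverting the resulting scalar Carreau relation through the function $\psi$ of \eqref{taupsi}, then integrating once more using $\widetilde u'|_{z_3=0,1}=0$, produces an explicit expression for $\widetilde u'$ whose $z_3$-average is exactly the weighted integral \eqref{FiltrationVelcovcity}; equation \eqref{expressionq} is then the cell-compatibility condition for the filtration velocity in $Z'_f$.

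The main obstacle is this identification step for $\gamma=1$: the Minty argument requires either strong convergence of the velocity gradient or a careful choice of test functions to pass to the limit in the nonlinear Carreau term, and the sharp pressure-decomposition estimates are what enable this. A secondary difficulty is establishing the factorized structure of the two-scale limit $\widetilde u(x',z',z_3)$, so that $\widehat q(x',\cdot)$ depends only on the local variable $z'$; this is achieved by testing against gradients of $Z'$-periodic functions and exploiting the vertical structure of the momentum equation. Uniqueness of $\widetilde p$ and $\widehat q$ follows from the strict monotonicity of the effective nonlinear Darcy operator when $\gamma=1$ and from standard linear elliptic theory when $\gamma\neq 1$.
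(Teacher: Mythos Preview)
Your plan is correct and follows essentially the same route as the paper: unfolding compactness for $(\widetilde u_\eps,\widetilde p_\eps)$, degeneration of the Carreau law to Newtonian viscosity $\eta_0$ or $\eta_\infty$ when $\gamma\neq 1$, a Minty monotonicity argument to identify the nonlinear limit when $\gamma=1$, explicit integration of the resulting one-dimensional ODE in $z_3$ via the inverse function $\psi$, and derivation of the cell problems \eqref{expressionq}, \eqref{local_problems_0} from the incompressibility constraint in $Z'_f$. The only cosmetic difference is that the paper carries out the Minty step directly in the unfolded domain $\omega\times Z_f$, testing \eqref{form_var_hat} with $v_\ep=\ep^{1-\gamma}(\varphi-\ep^{\gamma-2}\widehat u_\ep)$ for an arbitrary divergence-constrained $\varphi\in\mathcal{D}(\omega;C^\infty_\#(Z_f)^3)$, rather than using the oscillating corrector ansatz $\varphi(x')\theta(z_3)+\ep^\ell\Phi(x',x'/\ep^\ell)\theta(z_3)$ you propose; the unfolded formulation makes the monotonicity inequality and the passage to the limit slightly cleaner, but the substance is the same.
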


  \begin{theorem}[Case $r>2$]\label{expresiones_finalesrsup2}
	Depending on the value of $\gamma$, filtration velocity $\widetilde V$ can be expressed as follows:
	\begin{itemize}
		\item If $\gamma<1$, the filtration velocity is given by (\ref{FiltrationVelcovcity0}), with $\eta=\eta_0$ and permeability tensor $A$ given by (\ref{permeabilityA0}) with local problems (\ref{local_problems_0}) and the Darcy problem (\ref{Reynolds_equation0}).
		\item If $\gamma>1$, the filtration velocity is given by
		\begin{equation}\label{FiltrationVelcovcity0r}
			\widetilde V'(x')=-{1\over \lambda^2 (\eta_\infty-\eta_0)^{r'-1}2^{r'\over 2}( r'+1)}\mathcal{U}(f'(x')-\nabla_{x'}\widetilde p(z')),\quad \widetilde V_3\equiv 0\quad\hbox{in }\omega,
		\end{equation}
		where  the permeability function $\mathcal{U}:\mathbb{R}^2\to \mathbb{R}^2$ is monotone and coercive, defined by
		\begin{equation}\label{permeabilityA0r}
			\mathcal{U}(\delta')=\int_{Z'_f}\left|\delta'+\nabla_{z'}\widehat q_{\delta'}\right|^{r'-2}\left(\delta'+\nabla_{z'}\widehat q_{\delta'}\right)\,dz',\quad\forall\delta'\in\mathbb{R}^2.
		\end{equation}
		Here, $\widehat q_{\delta'}(z')$, for every $\delta'\in\mathbb{R}^2$, denotes the unique solution in $ L^{r'}_{0,\#}(Z'_f)\cap W^{1,r'}(Z'_f)$ of the local Hele-Shaw problem 
		\begin{equation}\label{Reynolds_power}
			\left\{\begin{array}{rl}\displaystyle
				{\rm div}_{z'}\left(\left|\delta'+\nabla_{z'}\widehat q_{\delta'}\right|^{r'-2}\left(\delta'+\nabla_{z'}\widehat q_{\delta'}\right)
				\right)=0&\hbox{in }Z_f',\\
				\noame
				\displaystyle \left(\left|\delta'+\nabla_{z'}\widehat q_{\delta'}\right|^{r'-2}\left(\delta'+\nabla_{z'}\widehat q_{\delta'}\right)
				\right)\cdot n=0&\hbox{ on }\partial T'.
			\end{array}\right.
		\end{equation}
		Moreover, the pressure $\widetilde p\in L^{r'}_0(\omega)\cap W^{1,r'}(\omega)$ is the unique solution of the Darcy problem 
		\begin{equation}\label{Reynolds_equation0r2}{\rm div}_{x'}\widetilde V'(x')=0\quad\hbox{in }\omega,\quad \widetilde V'(x')\cdot n=0\quad\hbox{on }\partial\omega.
		\end{equation}
		\item If $\gamma=1$, the filtration velocity is given by (\ref{FiltrationVelcovcity}), with local problems (\ref{expressionq}) and the Darcy problem (\ref{Reynolds_equation}).
		
	\end{itemize}

	\begin{remark}
		Theorems~\ref{expresiones_finales} and~\ref{expresiones_finalesrsup2} can be compared with~\cite[Theorems 2.1 and 2.3]{Anguiano_Bonn_SG_Carreau2} where the case of proportionally thin porous media (PTPM) is considered. It should be noted that the local Stokes problems involving velocity and pressure that appear for the PTPM  are replaced by local Hele-Shaw problems involving only the pressure for the VTPM.
	\end{remark}

\end{theorem}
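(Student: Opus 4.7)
The plan is to pass to the limit in the rescaled variational formulation of~\eqref{system_1_dil} using the velocity compactness from Lemma~\ref{lem_conv_vel} and the pressure decomposition afforded by the extension operator of Lemma~\ref{Lemma:extensionforpressure}, following the unfolding strategy. The three subcases $\gamma<1$, $\gamma>1$, $\gamma=1$ correspond to three asymptotic regimes of the Carreau law~\eqref{Carreaulaw} for $r>2$: the small-shear Newtonian limit $\eta_r\to\eta_0$, the large-shear power-law limit $\eta_r(\xi)\sim(\eta_0-\eta_\infty)\lambda^{r/2-1}|\xi|^{r-2}$, and the critical regime where the full nonlinear functional $\eta_r$ survives. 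In each case the two-scale limit $\widehat U(x',z',z_3)$ will satisfy $\widehat U_3\equiv 0$, $\widehat U'|_{z_3\in\{0,1\}}=0$, together with the local and macroscopic incompressibility conditions that produce the Darcy equation~\eqref{Reynolds_equation0r2} (or~\eqref{Reynolds_equation} in the critical case) from ${\rm div}_{x'}\widetilde V'=0$ and $\widetilde V'\cdot n=0$ on $\partial\omega$.

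For the non-critical regimes the two-scale limit problem separates into an explicitly solvable one-dimensional ODE in $z_3$. When $\gamma<1$, the rescaled shear rate vanishes in the limit so the Carreau law linearizes to viscosity $\eta_0$, producing the Poiseuille profile $\widehat U'={1\over 2\eta_0}z_3(1-z_3)(f'-\nabla_{x'}\widetilde p-\nabla_{z'}\widehat q)$; integrating in $z_3$ and decomposing $\widehat q=\sum_i(f'-\nabla_{x'}\widetilde p)_i\widehat q^i$ by linearity yields the local problems~\eqref{local_problems_0}, the permeability tensor~\eqref{permeabilityA0}, and formula~\eqref{FiltrationVelcovcity0}. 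When $\gamma>1$, the dominant stress is of power-law type, so the vertical problem reduces to a one-dimensional $r'$-Laplacian that can be inverted using a $|\cdot|^{r'-2}(\cdot)$ ansatz; integration in $z_3$ produces the normalizing constant appearing in~\eqref{FiltrationVelcovcity0r}, while existence and uniqueness for the horizontal cell problem~\eqref{Reynolds_power} follow from Browder--Minty applied to the strictly monotone coercive $r'$-Laplace operator on $W^{1,r'}_\#(Z'_f)$, giving the monotone coercive filtration map $\mathcal{U}$ of~\eqref{permeabilityA0r}.

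In the critical case $\gamma=1$, the full Carreau constitutive law survives and the vertical ODE must be inverted pointwise in $z_3$ via the scalar relation~\eqref{taupsi}; the map $\zeta\mapsto\tau$ is strictly increasing on $\mathbb{R}^+$ for $r>2$ by direct differentiation, so its inverse $\psi$ provides the explicit integral representations of~\eqref{FiltrationVelcovcity} and the cell problem~\eqref{expressionq}, which is monotone and coercive by construction. The main obstacle throughout will be the rigorous passage to the limit in the nonlinear viscous term for $\gamma\geq 1$: I would handle this by the Minty monotonicity trick as in the PTPM analysis~\cite{Anguiano_Bonn_SG_Carreau2}, exploiting the inequality $(\eta_r(A)A-\eta_r(B)B):(A-B)\geq 0$ together with an energy identity obtained by testing~\eqref{system_1_dil} against $\widetilde u_\ep$ itself and a suitable unfolded reconstruction of the limit stress as test function.
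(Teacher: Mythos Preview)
Your plan is correct and follows essentially the same route as the paper: the proof there first establishes the two-pressure limit system (Theorem~\ref{thm_dilatant}) by the Minty monotonicity argument applied to the unfolded variational formulation~\eqref{form_var_hat}, and then derives the filtration-velocity expressions by explicitly integrating the resulting one-dimensional $z_3$-ODE in each regime, exactly as you outline. Two small points worth sharpening: (i) the pressure decomposition you need is the CLS splitting $p_\ep=p_\ep^0+p_\ep^1$ of Lemma~\ref{lemma_est_P}, not the extension operator of Lemma~\ref{Lemma:extensionforpressure} (the latter is only used afterwards to extend $p_\ep^0$ to $\omega$); and (ii) for $\gamma>1$ the paper uses the specific test-function scaling $v_\ep=\ep^{(1-\gamma)/(r-1)}\bigl(\varphi-\ep^{(\gamma-r)/(r-1)}\widehat u_\ep\bigr)$, which is what singles out the power-law term after dividing through---your sketch should make that scaling explicit.
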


\subsection{Comments on the proofs and outline of the paper}

The approach developed in this study relies on an adaptation of the unfolding method to the present context, in which three different scales may be identified in the geometric description of the physical domain $\Omega_\eps$:
\begin{itemize}
	\item a macroscale corresponding to the size of the two-dimensional domain $\omega$;
	\item a microscale $\eps$ describing the thickness of the thin fluid layer;
	\item an intermediate or mesoscale $\eps^\ell$ modeling the periodicity of the medium in the horizontal directions.
\end{itemize}

As previously stated, the goal is to determine the limit of the sequence of solutions $(\widetilde u_\eps, \widetilde p_\eps)$ for the rescaled Stokes system~\eqref{system_1_dil}. Because these functions are initially defined over dilated sets $\widetilde \Omega_\eps$ that vary with $\eps$, the first step in the proof is to extend this function to the common limit domain $\Omega$, in such a way that passing to the limit in the variational formulation obtained after applying the unfolding method will be possible. This implies, in particular, the preservation of the a priori estimates satisfied by $(\widetilde u_\eps, \widetilde p_\eps)$.

Because homogeneous Dirichlet boundary conditions are imposed on the velocity, the natural extension by zero preserves the $L^q$ type estimates on $\widetilde u_\eps$ and its derivatives (see Remark~\ref{Rem:extensionVelocity}). However, the treatment of the pressure is more delicate, and constitutes a major novelty of this study. Indeed, following the original idea of Tartar~\cite{Tartar}, a classical way of extending the pressure in problems involving incompressible flows in periodic porous media is to argue by duality. Using this method, the extended pressure is obtained by applying De Rham's theorem to a bounded operator that combines the variational formulation of the problem and a restriction operator from $W^{1,q}(\Omega)^3$ to $W^{1,q}(\widetilde \Omega_\eps)^3$ satisfying adequate estimates.

In the case of the VTPM developed here, this well-known strategy does not seem to be conclusive in the sense that using a restriction operator adapted to this particular geometry, we were not able to derive the optimal estimates of the extended pressure required to pass to the limit in the variational formulation. For this reason, we developed a new method, based on a recent decomposition result for $L^p$ functions defined over a thin domain in~\cite{CLS}, and on an extension theorem for Sobolev spaces in the context of periodic porous domains~\cite{Acerbi}.

The remainder of this paper is organized as follows. Because the geometry of $\Omega_\eps$ is rather unusual, we studied in detail the validity of the hypotheses that are required to apply~\cite[Theorem 3.2 and Corollary 3.4]{CLS} and gathered all the relevant properties of domains $\omega_\eps$ in Section~\ref{sec:geometry}. Section~\ref{Sect:conv} details all results regarding the convergence of the rescaled and extended functions $\widetilde u_\eps,\widetilde p_\eps$ and their unfolded counterparts $\widehat u_\eps, \widehat p_\eps$. Section~\ref{Sec:obtentionLimitModels} concludes the proof of Theorems~\ref{expresiones_finales} and~\ref{expresiones_finalesrsup2}, after establishing the two-pressure limit systems satisfied by the limits $\widehat u,\widetilde p$ of the (properly normalized) unfolded functions $\widehat u_\eps, \widehat p_\eps$ (see Theorems~\ref{thm_pseudoplastic} and~\ref{thm_dilatant}).

\section{Geometric properties satisfied by the domains $\omega_\eps$}\label{sec:geometry}

In this section, we gather some geometric properties satisfied by the sequence of domains $\omega_\eps$, as well as uniform functional inequalities that will play a key role in proving the a priori estimates detailed in Section~\ref{Sect:conv}. More specifically, these properties will allow us to apply to the pressure $p_\eps$ a decompositon result proven in~\cite{CLS}, to prove Lemma~\ref{lemma_est_P}.

We first recall the classical definition of a Lipschitz domain in $\R^2$ (see, for example,~\cite[Definition 2.4.5]{HenrotPierre}).

\begin{definition}\label{Def:Lipschitzdomain}
	Let $U\subset\R^2$ be an open set with compact boundary. We say that $U$ is a \emph{Lipschitz domain} if there exist positive constants $a,r,L$ such that for any $y'\in \partial U$, there exists an orthonormal local coordinate system with origin at $y'=0$ and a function $\eta_{y'}:(-r,r)\to (-a,a)$, of class $C^2$, such that $\eta_{y'}(0)=0$, and setting $K=(-r,r)\times (-a,a)$, $\partial U \cap K$ and $U \cap K$ are respectively described in the new coordinate system by
	\begin{align}
		\partial U \cap K & = \left\lbrace  (X_1,\eta_{y'}(X_1)),\ -a<X_1<a \right\rbrace, \label{RepBdy}\\
		U \cap K & = \left\lbrace  (X_1,X_2)\in K,\ X_2 > \eta_{y'}(X_1) \right\rbrace. \label{RepSet}
	\end{align}
\end{definition}

Next, we clarify the notion of uniform $C^2$ regularity, a property that we assume to be satisfied by the sets $\omega$ and $T'$, and that will be used in the proof of Lemma~\ref{Property3.5}.
\begin{definition}\label{Def:C2domain}
	Let $U\subset\R^2$ be an open set with compact boundary. We say that $U$ is \emph{uniformly $C^2$} if $U$ is a Lipschitz domain and if for every $y'\in \partial U$, the function $\eta=\eta_{y'}$ introduced in Definition~\ref{Def:Lipschitzdomain} is of class $C^2$, and there exists a constant $C>0$ (independent of $y'$) such that
	\begin{equation}\label{RegAssumption}
		\|\eta'\|_{\infty}+\|\eta''\|_{\infty} \leq C.
	\end{equation}
\end{definition}

Lemmas~\ref{Prop:uniformPoincareWirtinger}, \ref{Prop:uniformConeCondition} and~\ref{Property3.5} allow us to apply the decomposition theorem~\cite[Theorem 3.2]{CLS} and its corollary~\cite[Corollary 3.4]{CLS} to derive the decomposition of the pressure detailed in Lemma~\ref{lemma_est_P}.

The first is the well-known Poincar\'e-Wirtinger inequality in $\omega_{\ep}$, with a uniform constant. This result is proven in~\cite[Theorem 2.14]{Ciora3}.
\begin{lemma}\label{Prop:uniformPoincareWirtinger}
	Let $q\in (1,+\infty)$. There exists a constant $C>0$ such that for all $\eps>0$ and $g_\eps\in W^{1,q}(\omega_\eps)$,
	\begin{equation}\label{Ineq:PoincareWirtinger}
		\left\| g_\eps -\fint_{\omega_\eps}g_\eps(y')dy'  \right\|_{L^q(\omega_\ep)} \leq C\|\nabla g_\eps\|_{L^q(\omega_\ep)^q}.
	\end{equation}
\end{lemma}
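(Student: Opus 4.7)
My plan is to reduce the inequality on the $\eps$-dependent perforated set $\omega_\eps$ to the classical Poincar\'e--Wirtinger inequality on the fixed smooth domain $\omega$, via a bounded extension operator with uniform norm.

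First, I would invoke (the 2D version of) the extension theorem of Acerbi--Chiad\`o Piat--Dal Maso--Percivale~\cite{Acerbi}, adapted to the present periodic porous geometry. Since the microstructure is periodic at scale $\eps^\ell$ with a Lipschitz hole $T'$ strictly included in $Z'$, there is a family of linear operators $P_\eps:W^{1,q}(\omega_\eps)\to W^{1,q}(\omega)$ built cell by cell: on the reference cell $Z'_f$ one extends $W^{1,q}(Z'_f)$ functions to the whole $Z'$ with control of the $L^q$ and gradient norms by a constant depending only on the geometry of $Z'_f$; then this reference extension is rescaled by dilation of factor $\eps^\ell$ on each cell $Z'_{k',\eps^\ell}$ and the local extensions are glued together (the scaling is neutral for $L^q\to L^q$ and for the gradient bound). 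Assumption~\eqref{Assumption:distanceBoundary}, together with the uniform $C^2$ regularity of $\omega$ and $T'$, guarantees that no obstacle touches $\partial\omega$ and hence no cellwise extension has to cross $\partial\omega$, yielding the uniform bounds
\[
\|P_\eps g_\eps\|_{L^q(\omega)}\leq C\|g_\eps\|_{L^q(\omega_\eps)},\qquad \|\nabla P_\eps g_\eps\|_{L^q(\omega)^2}\leq C\|\nabla g_\eps\|_{L^q(\omega_\eps)^2},
\]
with $C=C(q,\omega,T',k_0)$ independent of $\eps$.

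Next, applying the classical Poincar\'e--Wirtinger inequality on the fixed Lipschitz domain $\omega$ to $P_\eps g_\eps$, I obtain
\[
\Bigl\|P_\eps g_\eps-\fint_\omega P_\eps g_\eps\Bigr\|_{L^q(\omega)}\leq C_\omega\|\nabla P_\eps g_\eps\|_{L^q(\omega)^2}.
\]
To replace $\fint_\omega P_\eps g_\eps$ by $\fint_{\omega_\eps}g_\eps$ in the left-hand side restricted to $\omega_\eps$, I use the elementary fact that for any constant $c$, Jensen's inequality yields
\[
\Bigl\|g_\eps-\fint_{\omega_\eps}g_\eps\Bigr\|_{L^q(\omega_\eps)}\leq 2\|g_\eps-c\|_{L^q(\omega_\eps)}.
\]
Choosing $c=\fint_\omega P_\eps g_\eps$, restricting the Poincar\'e--Wirtinger estimate on $\omega$ to the subset $\omega_\eps\subset\omega$, and combining with the uniform gradient bound on $P_\eps g_\eps$, I conclude with a constant independent of $\eps$.

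The main obstacle is the $\eps$-uniform construction of $P_\eps$: the cells $Z'_{k',\eps^\ell}$ that meet $\partial\omega$ must be handled separately, since they are only partially contained in $\omega$ and the standard reference-cell extension procedure does not apply directly there. Assumption~\eqref{Assumption:distanceBoundary} is precisely designed to cope with this: it ensures that near $\partial\omega$ the set $\omega_\eps$ coincides with $\omega$ itself (in a tubular neighborhood of width of order $\eps^\ell$), so that no extension across a hole is needed near the boundary, and the uniform $C^2$ regularity of $\partial\omega$ prevents any degradation of the extension constant as $\eps\to 0$.
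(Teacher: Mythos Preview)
Your argument is correct. The paper itself does not prove this lemma: it simply cites \cite[Theorem~2.14]{Ciora3}. Your route via a uniform extension operator and the Poincar\'e--Wirtinger inequality on the fixed domain $\omega$ is a standard and perfectly valid alternative, and it is nicely self-contained given the tools already present in the paper (the same \cite{Acerbi} extension is invoked in Lemma~\ref{Lemma:extensionforpressure}). One point worth making explicit: you need the \emph{separate} gradient bound $\|\nabla P_\eps g_\eps\|_{L^q(\omega)}\le C\|\nabla g_\eps\|_{L^q(\omega_\eps)}$, which is indeed proved in \cite{Acerbi} (their cellwise construction maps constants to constants, so the gradient is controlled by the gradient alone), even though the paper's Lemma~\ref{Lemma:extensionforpressure} only records the combined $W^{1,q}$ estimate. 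With that detail clarified, your proof goes through and is arguably more transparent than a bare citation.
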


The next result indicates a uniform cone condition (at scale $\eps$).
Given $h>0$, $\theta\in (0,\pi/2)$ and a unitary vector $\xi'\in \R^2$, we denote by $\mathcal C(\xi',h,\theta)$ the open cone of $\R^2$ with vertex at the origin, angle $2\theta$, height $h$ and directed by $\xi'$, i.e.,
\begin{equation*}
	\mathcal C(\xi',h,\theta) = \{ y'\in \R^2,\ |y'|\cos\theta < x'\cdot \xi' < h \}.
\end{equation*}

\begin{lemma}\label{Prop:uniformConeCondition}
	There exists $\theta\in (0,\pi/2)$ such that for every $\eps>0$ and every $x'\in \overline{\omega_\eps}$, there exists a unit vector $\xi'_{x'}\in \R^2$ such that the following condition holds:
	\begin{equation}\label{Inclusion:cone}
		\forall y'\in B'(x',\eps)\cap\overline{\omega_\eps}\quad y'+\mathcal C(\xi'_{x'},\eps,\theta)\subset \omega_\eps.
	\end{equation}
\end{lemma}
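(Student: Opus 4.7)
The plan is to exploit the scale separation $\eps\ll\eps^\ell$ guaranteed by~\eqref{RelationAlpha} and to argue by cases on the location of $x'$ relative to $\partial\omega_\eps$. As a preliminary, I would extract from the uniform $C^2$ regularity of $\omega$ and $T'$ two ``strong'' cone properties of the following form: there exist $\theta_0\in(0,\pi/2)$, $h_0>0$ and maps $a'\mapsto \xi'_\omega(a'),\,\xi'_T(a')\in \mathbb{S}^1$ such that for every $a'\in\overline\omega$ and every $b'\in B'(a',h_0)\cap\overline\omega$ the cone $b'+\mathcal C(\xi'_\omega(a'),h_0,\theta_0)$ lies inside $\omega$, and analogously with $\xi'_T$ for the complement of $\overline{T'}$. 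Setting $d_T:=\mathrm{dist}(\overline{T'},\partial Z')>0$ (positive since $\overline{T'}\subsetneq Z'$), the $\eps^\ell$-periodicity gives a separation $\geq 2d_T\eps^\ell$ between any two distinct obstacles, while~\eqref{Assumption:distanceBoundary} gives a separation $\geq k_0\eps^\ell$ between obstacles and $\partial\omega$; both quantities are much larger than $\eps$ by~\eqref{RelationAlpha}.

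Set $C_0:=1+1/\cos\theta_0$, so that the cone $y'+\mathcal C(\xi',\eps,\theta_0)$ based at any $y'\in B'(x',\eps)$ is contained in $B'(x',C_0\eps)$. For $\eps$ small enough, the separations above imply that exactly one of the following holds: (A) $B'(x',C_0\eps)\subset\omega_\eps$; (B) $B'(x',C_0\eps)$ meets $\partial\omega$ but no obstacle; (C) $B'(x',C_0\eps)$ meets a single obstacle $\overline{T'_{k'_0,\eps^\ell}}$ and does not meet $\partial\omega$. In Case~A, any direction $\xi'_{x'}$ works. In Case~B, I would take $\xi'_{x'}=\xi'_\omega(x')$: the strong cone property of $\omega$, applied at the scale $\eps\leq h_0$, gives $y'+\mathcal C(\xi'_{x'},\eps,\theta_0)\subset\omega$ for every $y'\in B'(x',\eps)\cap\overline{\omega_\eps}$, and since the cone stays within distance $C_0\eps\ll k_0\eps^\ell$ of $\partial\omega$, it cannot reach any obstacle. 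In Case~C, I would take $\xi'_{x'}=\xi'_T((x'-\eps^\ell k'_0)/\eps^\ell)$: the rescaled exterior cone property (valid at the scale $\eps\ll\eps^\ell h_0$) yields $y'+\mathcal C(\xi'_{x'},\eps,\theta_0)\subset\R^2\setminus\overline{T'_{k'_0,\eps^\ell}}$, and the $C_0\eps$ bound on the distance from the cone to $x'$, compared with the $O(\eps^\ell)$ separation from other obstacles and from $\partial\omega$, shows that the cone lies in $\omega_\eps$. Taking $\theta:=\theta_0$ then produces the desired uniform angle.

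The main technical point is the justification of the ``strong'' cone property, namely that a single direction $\xi'(a')$ fits the cone at every nearby $b'$, not only at $a'$. This is precisely where uniform $C^2$ regularity in the sense of Definition~\ref{Def:C2domain} enters: the unit normal to $\partial\omega$ (respectively $\partial T'$) is continuous with a uniform modulus controlled by~\eqref{RegAssumption}, so a slight tilt of the normal at $a'$ satisfies the cone inclusion at every $b'\in B'(a',h_0)\cap\overline\omega$, with parameters $(h_0,\theta_0)$ that can be chosen uniformly by a compactness argument on $\partial\omega$ and $\partial T'$. Once this uniform cone lemma is in hand, the remainder of the proof is a careful bookkeeping of scales based on~\eqref{RelationAlpha} and~\eqref{Assumption:distanceBoundary}.
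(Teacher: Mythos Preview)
Your proof is correct and relies on the same ingredients as the paper's: the uniform $C^2$ regularity of $\omega$ and $T'$, the separation assumption~\eqref{Assumption:distanceBoundary}, and the scale relation~\eqref{RelationAlpha}. The organization, however, is different. The paper does not split into cases (A)--(C); instead it dilates the whole domain by $\eps^{-\ell}$, observes in one stroke that $\eps^{-\ell}\omega_\eps$ satisfies a uniform cone property with parameters $(\theta,h,r)$ independent of $\eps$ (this is where the $C^2$ regularity of $\omega$, $T'$ and~\eqref{Assumption:distanceBoundary} are invoked), and then scales back to get cones of height $\eps^\ell h$ and radius $\eps^\ell r$ in $\omega_\eps$; since $\eps\ll\eps^\ell$, the smaller cones of height $\eps$ and balls of radius $\eps$ fit inside. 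Your case analysis is essentially what one would write out to justify the paper's one-sentence ``we observe'': it is more explicit and self-contained, at the cost of being longer, while the paper's rescaling packaging is more concise but leaves the uniformity of the cone property for $\eps^{-\ell}\omega_\eps$ to the reader.
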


\begin{proof}
	From~\eqref{Assumption:distanceBoundary} and the assumptions that $\omega$ and $T'$ are uniformly $C^2$ in the sense of Definition~\ref{Def:C2domain}, we observe that the dilated domain $\eps^{-\ell}\omega_\eps$ satisfies the following uniform cone property: there exists $\theta\in (0,\pi/2)$, $h>0$ and $r>0$ such that for all $z_0'\in \eps^{-\ell}\omega_\eps$, there exists a unit vector $\xi'\in \R^2$ such that
	\begin{equation*}
		\forall z'\in B'(z_0',r)\cap \eps^{-\ell}\overline{\omega_\eps}\quad z'+\mathcal C(\xi',\theta,h)\subset \eps^{-\ell}\overline{\omega_\eps}.
	\end{equation*}
	Setting $x'=\eps^\ell z_0'$ and $y'=\eps^\ell z'$, this is equivalent to:
	\begin{equation}\label{ProofConeProp}
	\forall y'\in B'(x_0',\eps^\ell r)\cap \overline{\omega_\eps}\quad y'+\eps^\ell \mathcal C(\xi',\theta,h)\subset \overline{\omega_\eps}.
\end{equation}
	Since $\ell<1$, we can assume that $\eps<\eps^\ell \max(r,h)$. Using that  $\eps^\ell \mathcal C(\xi',\theta,h) =  \mathcal C(\xi',\theta,\eps^\ell h)$, we deduce the inclusions
	\[
	B'(x_0',\eps)\subset B'(x_0',\eps^\ell r)\quad \textrm{and}\quad 
   \mathcal C(\xi',\theta,\eps)\subset \eps^\ell \mathcal C(\xi',\theta,h),
		\]
which, combined with~\eqref{ProofConeProp}, yields the desired property~\eqref{Inclusion:cone}.
\end{proof}

The last property that we need is another uniform Poincar\'e-Wirtinger inequality that occurs locally in $\Omega_\eps$. Following the notation from~\cite{CLS}, we define for every $x'\in \overline{\omega_\eps}$ the set $\widehat B(x',\eps)$ as
\begin{equation}\label{Def:Bhat}
	\widehat B(x',\eps)=\left\lbrace y\in \Omega_\eps,\ y'\in B'(x',\eps)  \right \rbrace.
\end{equation}

\begin{lemma}\label{Property3.5}
	Let $q\in (1,+\infty)$ be fixed. There exists a constant $\Lambda>0$ such that for every $\eps>0$ and every $x'\in \overline{\omega_\eps}$,
	\begin{equation}\label{PoincareWirtingerinBhat}
			\forall p_\eps\in L^q(\widehat B(x',\eps))\quad 	\left\| p_\eps -\fint_{\widehat B(x',\eps)}p_\eps(y)dy  \right\|_{L^q(\widehat B(x',\eps))} \leq C\|\nabla p_\eps\|_{L^q(\widehat B(x',\eps))^q}.
	\end{equation}
\end{lemma}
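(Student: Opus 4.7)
The plan is to exploit the product structure $\widehat B(x',\eps) = D_{x',\eps}\times (0,\eps)$ with $D_{x',\eps}:=B'(x',\eps)\cap\omega_\eps$, to rescale horizontally by a factor $\eps^{-1}$, and finally to invoke a classical uniform Poincar\'e--Wirtinger inequality for the rescaled family of open sets, which satisfies a common cone condition.

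First, I would write $p_\eps - \fint_{\widehat B}p_\eps = (p_\eps - \overline p_\eps) + (\overline p_\eps - \fint_{\widehat B}p_\eps)$, where $\overline p_\eps(y'):=\fint_0^\eps p_\eps(y',t)\,dt$, and combine Fubini with the one-dimensional Poincar\'e--Wirtinger inequality on $(0,\eps)$ (whose constant is of order $\eps$) to reduce the problem to the two-dimensional estimate
\begin{equation*}
\left\| g - \fint_{D_{x',\eps}} g \right\|_{L^q(D_{x',\eps})} \leq C\eps\, \|\nabla_{y'} g\|_{L^q(D_{x',\eps})^2}
\end{equation*}
for every $g\in W^{1,q}(D_{x',\eps})$, with $C$ independent of $\eps$ and $x'$. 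The change of variables $\tilde y' = \eps^{-1}(y'-x')$ maps $D_{x',\eps}$ onto the set $\hat D_{x',\eps}:=\eps^{-1}(D_{x',\eps}-x')\subset B'(0,1)$, and by homogeneity of Lebesgue measure and of the gradient, the above estimate is equivalent to a Poincar\'e--Wirtinger inequality on $\hat D_{x',\eps}$ with a constant independent of $\eps$ and $x'$.

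To obtain such a uniform inequality, I would use the fact that the rescaled family $\{\hat D_{x',\eps}\}$ inherits from Lemma~\ref{Prop:uniformConeCondition} a common cone condition: there exists an angle $\theta\in (0,\pi/2)$ such that for every pair $(x',\eps)$ one can find a unit vector $\xi'$ for which $(\tilde y' + \mathcal C(\xi',1,\theta))\cap B'(0,1)\subset \hat D_{x',\eps}$ holds for every $\tilde y'\in\overline{\hat D_{x',\eps}}$. Open subsets of the fixed ball $B'(0,1)$ satisfying such a uniform cone condition form a compact family for the Hausdorff distance and are uniformly Lipschitz, so they admit a Poincar\'e--Wirtinger inequality with a constant depending only on $\theta$ and $q$ (see, e.g., Adams--Fournier or Maz'ya). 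Combined with the reductions above, this yields the lemma with $\Lambda$ independent of $\eps$ and $x'$.

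The main obstacle lies in the last step, namely establishing uniformity of the Poincar\'e constant across the entire family $\{\hat D_{x',\eps}\}$, whose elements may have rather different shapes (near-full discs when $x'$ is deep in $\omega_\eps$, near-half-discs when $x'$ lies close to $\partial\omega$, and crescent-like domains when $x'$ is close to an obstacle $\partial T'_{k',\eps^\ell}$). If no direct reference covers this family, I would argue by a compactness-contradiction argument: assuming that the Poincar\'e constants blow up along some sequence $(x'_n,\eps_n)$, I would extract a Hausdorff-convergent subsequence whose limit $\hat D_\infty$ still satisfies the cone condition, hence is Lipschitz with controlled character, on which the Poincar\'e--Wirtinger inequality holds in the usual sense, contradicting the blow-up assumption.
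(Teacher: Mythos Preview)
Your approach is genuinely different from the paper's, and the difference is worth spelling out.

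\textbf{What the paper does.} The paper actually proves the Ne\v{c}as inequality
\[
\Big\| p_\eps -\fint_{\widehat B}p_\eps \Big\|_{L^q(\widehat B)} \leq \Lambda\,\|\nabla p_\eps\|_{W^{-1,q}(\widehat B)}\,,
\]
valid for all $p_\eps\in L^q(\widehat B)$; the printed right-hand side of the lemma is a typo. The proof rescales $\widehat B(x',\eps)$ isotropically in all three variables by $\eps^{-1}$, checks that both sides of the Ne\v{c}as inequality scale identically, and reduces to showing that the rescaled 3D domain $\eps^{-1}\widehat B$ is uniformly Lipschitz. This is done case by case: a fixed cylinder when $x'$ is far from $\partial\omega_\eps$, and two subcases when $x'$ is near $\partial\omega$ or near an obstacle $\partial T'_{k',\eps^\ell}$, each handled by a direct geometric argument exploiting the assumed $C^2$ regularity of $\omega$ and $T'$ to show that the relevant boundary piece is, after rescaling, arbitrarily close to a straight segment meeting the unit circle at a controlled angle.

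\textbf{What you do, and what it buys.} You prove the Poincar\'e--Wirtinger inequality with $\|\nabla p_\eps\|_{L^q}$ on the right (the literal statement as printed), via a vertical/horizontal splitting followed by a 2D rescaling. The attractive feature is that you bypass the paper's Subcase~1/Subcase~2 analysis entirely by invoking the already-established cone property of Lemma~\ref{Prop:uniformConeCondition}. This is more economical, and if the lemma were really about $\|\nabla p_\eps\|_{L^q}$ it would be the natural route.

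\textbf{Two points to address.} First, the application downstream (the decomposition of \cite{CLS}) requires the Ne\v{c}as form with the $W^{-1,q}$ norm, which makes sense for $p_\eps$ that are merely $L^q$; your inequality presupposes $p_\eps\in W^{1,q}$. Your strategy adapts, though: once you know $\hat D_{x',\eps}$ is uniformly Lipschitz, Ne\v{c}as holds with a uniform constant there as well, and the 3D inequality follows by the paper's own scaling computation. Second, the cone property you extract from Lemma~\ref{Prop:uniformConeCondition}, namely $(\tilde y'+\mathcal C(\xi',1,\theta))\cap B'(0,1)\subset \hat D_{x',\eps}$, is \emph{not} the standard interior cone condition: the cone is only required to lie in $\hat D_{x',\eps}$ after intersection with the ball, and its direction is the same at every point. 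You need an extra sentence to convert this into uniform Lipschitz regularity of $\hat D_{x',\eps}$ (the single direction forces $\partial\hat D_{x',\eps}\setminus\partial B'(0,1)$ to be a Lipschitz graph of slope at most $\cot\theta$ in the $\xi'$ direction, meeting $\partial B'(0,1)$ transversally), or alternatively to justify the uniform $W^{1,q}$ extension operators your compactness--contradiction argument implicitly uses. Neither Adams--Fournier nor Maz'ya covers this hybrid condition directly.
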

\begin{proof}
	We fix $x'\in \overline{\omega_\eps}$ and assume that $\mathrm{dist}(x',\partial \omega_\eps)\geq \eps$. In this  case, the set $\widehat B(x',\eps)$ defined by~\eqref{Def:Bhat} is simply:
	\[
	\widehat B(x',\eps)=B'(x',\eps)\times (0,\eps),
	\]
	and it can be transformed into the fixed cylinder 
	\[
	Q:=B'(0,1)\times (0,1)
	\]
	by applying the dilatation
	\begin{equation}\label{Dilatation}
		z'=(y'-x')/\eps, \quad z_3=y_3/\eps.
	\end{equation}

	Now, take $p_\eps\in L^q(\widehat B(x',\eps))$ and define $r_\eps\in L^q(Q)$ by
	\begin{equation}\label{Def:reps}
		r_\eps(z)=p_\eps(x'+\eps z',\eps z_3),\quad z\in Q\, .
	\end{equation}
	Writing $\widehat B=\widehat B(x',\eps)$, there hold the relations
	\begin{align}
		\left \| r_\eps - \fint_{Q} r_\eps \right \|_{L^q(Q)}  & = \eps^{-3/q} \left\| p_\eps - \fint_{\widehat B} p_\eps \right\|_{L^q(\widehat B)}, \label{LqnormQ}\\
		\|\nabla r_\eps\|_{W^{-1,q}(Q)} & = \eps^{-3/q} \|\nabla r_\eps\|_{W^{-1,q}(\widehat B)}. \label{WminusnormQ}
	\end{align}
	Equality~\eqref{LqnormQ} is derived from direct computation and the change of variables~\eqref{Dilatation}:
	\begin{align*}
		\left\| r_\eps - \fint_{Q} r_\eps \right \|_{L^q(Q)}^q & = \int_Q \left( r_\eps(z) -\frac{1}{\textrm{Vol}(Q)} \int_{Q} r_\eps(u)du\right)^q dz\\
		& = \eps^{-3}\int_{\widehat B} \left(p_\eps(y) - \frac{\eps^{-3}}{\textrm{Vol}(Q)} \int_{\widehat B} p_\eps(v)dv\right)^q dy\\
		& = \eps^{-3}\int_{\widehat B} \left(p_\eps(y) - \frac{1}{\textrm{Vol}(\widehat B)} \int_{\widehat B} p_\eps(v)dv\right)^q dy\\
		& = \eps^{-3}\left\| p_\eps - \fint_{\widehat B } p_\eps \right \|_{L^q(\widehat B)}^q.
	\end{align*}
	The second equality~\eqref{WminusnormQ} is achieved by fixing $\varphi\in W^{1,q'}_0(Q)$ (with $\frac{1}{q}+\frac{1}{q'}=1$) and associating it (for instance) with the function $\widehat{\varphi}\in W^{1,q}_0(\widehat B)$ defined by
	\begin{equation}\label{ChangeofFunctionsDualityProduct}
		\widehat \varphi(y) = \varphi((y'-x')/\eps, z_3/\eps),\quad y\in \widehat B.
	\end{equation}
	Once again,
	\begin{align*}
		\int_{Q}|\nabla_z \varphi(z)|^{q'} dz 
		& = \eps^{-3+q'}\int_{\widehat B} |\nabla_y \widehat \varphi(y)|^{q'} dy.
	\end{align*}
	Therefore, $\|\varphi\|_{W^{1,q'}_0(Q)} =\eps^{-\frac{3}{q'}+1} \|\widehat \varphi\|_{W^{1,q'}_0(\widehat B)}$ and one can compare the $W^{-1,q}$ norms of $r_\eps$ and $p_\eps$ by writing
	\begin{align*}
		\|\nabla_z r_\eps\|_{W^{-1,q}(Q)}& = \sup_{\|\varphi\|\leq 1} \int_Q r_\eps(y)\, \mathrm{div}_z\, \varphi(z)\, dz\\
		& = \sup_{\|\widehat \varphi\|\leq \eps^{\frac{3}{q'}-1}} \eps^{-3} \int_{\widehat B} p_\eps(y)\, \eps\, \mathrm{div}_y\, \widehat\varphi(y)\, dy\\
		& = \eps^{\frac{3}{q'}-1}\eps^{-2}  \sup_{\|\widehat \varphi\|\leq 1} \int_{\widehat B} p_\eps(y)\,  \mathrm{div}_y\, \widehat\varphi(y)\, dy\\
		& = \eps^{-3/q}\|\nabla_y p_\eps\|_{W^{-1,q}(\widehat B)}.
	\end{align*}
	
	Because $Q$ is a bounded, connected Lipschitz domain, the classical 
	Ne{\v{c}}as inequality yields the existence of $\Lambda>0$ such that for any $r\in L^q(Q)$,
	\begin{equation}\label{Necas}
		\left\| r - \fint_{Q} r \right \|_{L^q(Q)}^q \leq \Lambda \|\nabla r\|_{W^{-1,q}(Q)}.
	\end{equation}
	Taking an arbitrary $p_\eps\in L^q(\widehat B(x',\eps))$, defining $r_\eps$ by~\eqref{Def:reps}, and using the relations~\eqref{LqnormQ}--\eqref{WminusnormQ}, we deduce that~\eqref{PoincareWirtingerinBhat} is satisfied for any $x'\in \overline{\omega_\eps}$ such that $\mathrm{dist(x',\partial \omega_\eps)}\geq \eps$.
	
	\bigskip

	It remains to address the case in which $\mathrm{dist}(x',\partial \omega_\eps)< \eps$.  Based on assumption~\eqref{Assumption:distanceBoundary}, two incompatible situations can occur: either $\mathrm{dist}(x',\partial \omega)< \eps$ (subcase 1) or there exists $k'\in \mathcal K_{\eps}$ such that  $\mathrm{dist}(x',\overline T'_{k',\varepsilon^\ell})< \eps$ (subcase 2).

	\emph{Subcase 1. } If $\mathrm{dist}(x',\partial \omega)<\eps$, the set $\widehat B(x',\eps)$ is no longer a cylinder because $B'(x',\eps)$ is intersected by $\partial\omega$. However, the previous argument can be adapted. Indeed, using the dilatation $y\mapsto \eps^{-1}y$ , one can turn $\widehat B$ into a bounded, connected Lipschitz domain $\eps^{-1}\widehat B$, which is now dependent on $\eps$ and $x'$, but whose Lipschitz constants remain uniformly bounded. As a result, the Ne{\v{c}}as inequality still holds in $\eps^{-1}\widehat B$, with a uniform constant (see for instance~\cite[Chapter IV]{BoyerFabrie}).

	We consider a projection $\pi(x')$ of $x'$ on $\partial \omega$, i.e.\! a point $\pi(x')\in \partial \omega$ satisfying
	\[
	|x'-\pi(x')| = \min\{ |x'-y'|,\ y'\in \partial \omega \}.
	\]
	Since $\omega$ is uniformly $C^2$, we consider the function $\eta$ associated with $y'=\pi(x')$ in Definition~\ref{Def:C2domain}. Because $a,r$ are fixed, upon choosing $\eps$ small, we may assume that $B'(x',\eps)\subset K$, and in particular,
	\[
	[B'(x',\eps)\cap K] \subset [\omega \cap K].
	\]
	We claim that the rescaled set $\eps^{-1}\widehat B(x',\eps)$ is still a connected Lipschitz domain with bounded constants $a,r,L$.

	The idea of the proof is to describe the part $\eps^{-1}(\partial \omega\cap B'(x',\eps))$ as being arbitrary close to a segment, and control the angle between this segment and the circle $\eps^{-1}\partial B'(x',\eps)$ at points that belong to the intersection $\eps^{-1}(\partial \omega\cap\partial B'(x',\eps))$. To this aim, take $y'\in \partial \omega\cap \partial B'(x',\eps)$. In the new coordinate system associated with representations~\eqref{RepBdy}--\eqref{RepSet}, $y'$ is represented by the vector $X'=(X_1,X_2)$ such that
	\[
	X'=R(y'-\pi(x'))
	\]
	where $R\in SL_2(\R)$. In particular, $|X'|=|y'-\pi(x')|$ so by triangle inequality,
	\[
	|X'|\leq |y'-x'|+|x'-\pi(x')| < 2\eps
	\]
	since $y'\in \partial  B'(x',\eps)$ and $|x'-\pi(x')|=\mathrm{dist}(x',\partial \omega)<\eps$. This implies that
	\[
	|X_1|<C\eps.
	\]

	Then, we can use the regularity assumption~\eqref{RegAssumption} to deduce that
	\[
	|\eta(X_1)-\eta(0)-\eta'(0)X_1|\leq \frac{1}{2}|X_1|^2\, \| \eta'' \|_{\infty} \leq C|X_1|^2.
	\]
	Since $\eta(0)=0$, this implies
	\[
	\eps^{-1} |\eta(X_1)-\eta'(0)X_1|\leq C\eps.
	\]
	Notice that one also has
	\[
	|\eta'(X_1)-\eta'(0)|\leq |X_1|\, \|\eta'\|_{\infty}\leq C\eps.
	\]
	This implies that the direction of the tangent vector $\tau=(1,\eta'(X_1))$ is arbitrarily close to the direction of $\tau_0:=(1,\eta'(0))$. 
	
	Finally, denoting by $X^*$ the vector representing $x'$ in the local system of coordinates, one can observe that $X^*$ is the minimizer of the quantity $|X^*-X|^2$ among all $X\in \partial\omega\cap K$. Writing
	\[
	|X^*-X|^2 = |X_1^*-X_1|^2 + |X_2^*-\eta(X_1)|^2
	\]
	it is easy to see that $X^*$ satisfies the necessary optimality condition
	\[
	X_1^*+ \eta'(0)X_2^* = 0.
	\]
	Hence, the vector $\tau_0$ is orthogonal to $X^*$. Elementary geometrical arguments can then be applied to establish a uniform bound on the angle between the vector $\tau$ (which is arbitrarily close to $\tau_0$) and the exterior normal to the ball $B'(x',\eps)$ at any intersection point. Since the dilatation $y'\mapsto \eps^{-1}y'$ preserves the angles, this proves that $\eps^{-1}(\omega\cap B'(x',\eps))$ remains a connected Lipschitz domain with constants independent on $x'$ and $\eps$, and so does $\eps^{-1}\widehat B(x',\eps)$.

	\medskip
	
	\emph{Subcase 2.} Now, consider the case of a point $x'$ such that $\mathrm{dist}(x',T'_{k',\eps^\ell})<\eps$. For simplicity, it is not restrictive to assume that $k'=(0,0)$. Because $T'$ is uniformly $C^2$, one can modify the reasoning from Subcase 1 to consider that the intersection $B'(x',\eps)\cap \partial \omega$ is now replaced by 
	\[
	B'(x',\eps)\cap (\eps^\ell \partial T').
	\]

	Applying the dilatation $Z'=y'/\eps$ to $y'\in \overline{B'(x',\eps)}$, setting $Z^*=x'/\eps$ and using the fact that $T'$ is uniformly $C^2$, we see that the rescaled set $B'(Z^*,1)\cap (\eps^{\ell-1} \partial T')$ is contained in a graph:
	\[
	B'(Z^*,1)\cap (\eps^{\ell-1} \partial T')\subset\left\lbrace 
	(Z_1,Z_2)\in B'(Z^*,1),\ Z_2 = \eta_{\eps}(Z_1)
	\right)\rbrace
	\]
	where $\eta_\eps:(-\eps^{\ell-1}r,\eps^{\ell-1}r)\to (-\eps^{\ell-1}a,\eps^{\ell-1}a)$ is defined by  $\eta_{\eps}(Z_1)=\eps^{\ell-1}\eta(\eps^{1-\ell}Z_1)$. In particular, $\|\eta_{\eps}'\|_{\infty} = \|\eta_{\eps}'\|_{\infty}$ and $\|\eta_{\eps}''\|_{\infty} = \eps^{1-\ell}\|\eta_{\eps}'\|_{\infty}$ so by assumption~\eqref{RegAssumption},
	\[
	\|\eta_{\eps}'\|_{\infty}\leq C\quad \textrm{and}\quad \|\eta_{\eps}''\|_{\infty}\leq C\eps^{1-\ell}.
	\]
	
	Using that $\eta_{\eps}'(0)=\eta'(0)$, we deduce the following estimates: for any $Z'\in  B'(Z^*,1)\cap(\eps^{\ell-1}\partial T'))$ (which satisfies $|Z_1|\leq C$ by triangle inequality),
	\begin{align*}
		|Z_2-\eta'(0)Z_1| \leq C\eps^{1-\ell},\quad 
		|\eta_{\eps}'(Z_1)-\eta_{\eps}'(0)| \leq C\eps^{1-\ell}.
	\end{align*}
	Since $\lim_{\eps\to 0}\eps^{1-\ell}=0$, the above estimates yield that $Z'\in  B'(Z^*,1)\cap(\eps^{\ell-1}\partial T'))$ is composed of a union of finitely many portions of graphs (which are uniformly close to the line passing through the origin and directed by $\eta'(0))$ and finitely many arcs of circles or radius $1$, with a uniform bound on the number of such pieces, and also a uniform control over the angle between the tangent to the graph and the tangent to the arc of circle at any point $Z'\in  \partial B'(Z^*,1)\cap(\eps^{\ell-1}\partial T'))$. This proves that $\eps^{-1}\widehat B(x',\eps)$ is a connected Lipschitz domain with uniform Lipschitz constants, and concludes the proof of Proposition~\ref{Property3.5}.
	
\end{proof}

We conclude this section by stating and proving the existence of a linear and continuous extension operator from $W^{1,p}(\omega_{\ep})$ to $W^{1,p}(\omega)$, which will be used in Corollary~\ref{Coro:extensionp0} to extend the pressure $p_\ep^0$ introduced in Lemma~\ref{lemma_est_P}, to the whole domain $\omega$. This result is based on the results from~\cite{Acerbi}, applied to the domain $\omega$ perforated with periodic holes at scale $\ep^\ell$.

\begin{lemma}\label{Lemma:extensionforpressure}
For every $p\in [1,+\infty)$ and $\eps>0$, there exists a linear continuous extension operator $E_\eps:W^{1,p}(\omega_\eps)\to W^{1,p}(\omega)$ such that 
\begin{equation}\label{Ineq:ExtensionOp}
\forall r_\eps \in W^{1,p}(\omega_\eps)\quad \|E_\eps(r_\eps)\|_{W^{1,p}(\omega)}\leq C \|r_\eps\|_{W^{1,p}(\omega_\eps)}
\end{equation}
where the constant $C>0$ does not depend on $\eps$.
\end{lemma}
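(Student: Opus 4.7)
The plan is to follow the construction developed in~\cite{Acerbi}, adapted to the present geometry and leveraging the geometric assumption~\eqref{Assumption:distanceBoundary}. I would proceed in three steps: first, build a reference extension operator on the periodic cell $Z'$ that is ``local'' near the obstacle $\overline{T'}$; then rescale it to each physical cell $Z'_{k',\ep^\ell}$; and finally patch the local extensions together to obtain $E_\ep$ on the whole of $\omega$. The constant $k_0$ appearing in~\eqref{Assumption:distanceBoundary} will be chosen as the width of a neighborhood of $\overline{T'}$ inside $Z'_f$ on which the reference extension is supported.

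First, since $T'$ is uniformly $C^2$ in the sense of Definition~\ref{Def:C2domain} and $\overline{T'}$ is strictly contained in $Z'$, the fluid cell $Z'_f = Z'\setminus\overline{T'}$ is a bounded connected Lipschitz domain. Classical extension theorems for Sobolev functions on Lipschitz domains (see for instance the construction designed for periodic porous media in~\cite{Acerbi}) provide a linear continuous operator $\Pi: W^{1,p}(Z'_f) \to W^{1,p}(Z')$ that preserves constants, i.e.\ $\Pi c = c$ for every constant $c$, and that can be constructed so that the values of $\Pi v$ inside $\overline{T'}$ depend only on the values of $v$ in a fixed neighborhood $U$ of $\partial T'$ within $Z'_f$. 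Applying $\Pi$ to $v-\fint_{Z'_f}v$ and combining with the Poincar\'e--Wirtinger inequality on the Lipschitz domain $Z'_f$, I would obtain the key estimates
\[
\|\nabla \Pi v\|_{L^p(Z')} \leq C\|\nabla v\|_{L^p(Z'_f)}\quad\text{and}\quad \|\Pi v\|_{L^p(Z')}\leq C\|v\|_{W^{1,p}(Z'_f)}.
\]
Next, for every $k'\in\mathcal K_\ep$, I would define a local operator $\Pi_{k',\ep}: W^{1,p}(Z'_{f_{k'},\ep^\ell})\to W^{1,p}(Z'_{k',\ep^\ell})$ by translating and rescaling $\Pi$ through the affine map $z'\mapsto \ep^\ell k'+\ep^\ell z'$. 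The Jacobian factor $\ep^{2\ell}$ coming from the change of variables is exactly compensated by the scaling $\ep^{-\ell}$ of the gradient, so the bounds above transfer without loss, yielding
\[
\|\Pi_{k',\ep}v\|_{L^p(Z'_{k',\ep^\ell})}\leq C\|v\|_{W^{1,p}(Z'_{f_{k'},\ep^\ell})},\quad \|\nabla\Pi_{k',\ep}v\|_{L^p(Z'_{k',\ep^\ell})}\leq C\|\nabla v\|_{L^p(Z'_{f_{k'},\ep^\ell})}
\]
with $C$ independent of $\ep$ and $k'$. The global extension is then defined by $E_\ep r_\ep=r_\ep$ on $\omega_\ep$ and $E_\ep r_\ep=\Pi_{k',\ep}(r_\ep)$ inside each hole $\overline T'_{k',\ep^\ell}$; the two pieces match across $\partial T'_{k',\ep^\ell}$ because $\Pi$ is an extension, so $E_\ep r_\ep\in W^{1,p}(\omega)$, and summing the cell-wise estimates produces~\eqref{Ineq:ExtensionOp}.

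The main obstacle is handling the cells $Z'_{k',\ep^\ell}$ that are cut by $\partial\omega$, for which $r_\ep$ is only defined on $Z'_{f_{k'},\ep^\ell}\cap\omega$ rather than on the entire fluid cell. This is exactly where assumption~\eqref{Assumption:distanceBoundary} enters: by choosing $k_0$ to be the width of the neighborhood $U$ of $\partial T'$ introduced above, one guarantees that the rescaled neighborhood $\ep^\ell U$ surrounding $\partial T'_{k',\ep^\ell}$ remains entirely contained in $\omega_\ep$, because the hole is at distance at least $\ep^\ell k_0$ from $\partial\omega$. Consequently, even for cells cut by $\partial\omega$, the values of $\Pi_{k',\ep}(r_\ep)$ inside $\overline T'_{k',\ep^\ell}$ only involve values of $r_\ep$ on the fluid region $\omega_\ep$ where $r_\ep$ is indeed defined. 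This closes the argument and yields the uniform extension operator $E_\ep$ satisfying~\eqref{Ineq:ExtensionOp}.
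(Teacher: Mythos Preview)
Your proposal is correct and captures the same underlying mechanism, but the paper's proof is considerably more economical. Rather than reconstructing the cell-by-cell extension from scratch, the paper directly invokes \cite[Theorem~2.1]{Acerbi}, which already packages your Steps~1--3 into a black-box operator $T_\eps:W^{1,p}(\omega_\eps)\to W^{1,p}_{\mathrm{loc}}(\omega)$ satisfying a uniform estimate on the interior region $\omega(\eps^\ell k_0)=\{x'\in\omega:\mathrm{dist}(x',\partial\omega)>\eps^\ell k_0\}$. Assumption~\eqref{Assumption:distanceBoundary} is then used in a single stroke: since every obstacle lies at distance at least $\eps^\ell k_0$ from $\partial\omega$, the boundary strip $\omega\setminus\omega(\eps^\ell k_0)$ contains no holes, so $T_\eps r_\eps=r_\eps$ there and the estimate on that strip is trivial. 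Your approach uses the same geometric assumption but from the dual viewpoint---ensuring that the local extension data near each hole remain inside $\omega_\eps$---which is precisely what makes Acerbi's construction go through near $\partial\omega$. What the paper's route buys is brevity; what yours buys is that it makes explicit why the constant $k_0$ in~\eqref{Assumption:distanceBoundary} is exactly the width of the extension neighborhood, which the paper only alludes to.
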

\begin{proof}
	According to~\cite[Theorem 2.1]{Acerbi}, there exist $T_\eps:W^{1,p}(\omega_\eps)\to W^{1,p}_{\mathrm{loc}}(\omega)$ and constants $k_0,k_1$ such that for any $r_\eps\in W^{1,p}(\omega_\eps)$, $T_\eps r_\eps = r_\eps$ almost everywhere in $\omega_\eps$, and
	\begin{equation}
		\int_{\omega(\eps^\ell k_0)} |T_\eps r_\eps|^p + |D(T_\eps r_\eps)|^p dx'\leq k_1 	\int_{\omega_\eps} |r_\eps|^p + |Dr_\eps|^p dx'.  \label{EstiTep1}
	\end{equation}
	In the above integral, $\omega(\eps^{\ell}k_0)$ denotes the open subset of $\omega$ defined by
	\[
	\omega(\eps^{\ell}k_0):=\left\lbrace 
	x\in \omega,\ \mathrm{dist}(x,\partial \omega)>\eps^{\ell}k_0
	\right\rbrace.
	\]
	By assumption~\eqref{Assumption:distanceBoundary}, $
	\omega\setminus \omega(\eps^{\ell}k_0)$ is included in $\omega_\eps$,
	so we also have
	\begin{align}
	\int_{\omega\setminus\omega(\eps^\ell k_0)} |T_\eps r_\eps|^p + |D(T_\eps r_\eps)|^p dx' = \int_{\omega\setminus\omega(\eps^\ell k_0)} |r_\eps|^p + |D r_\eps|^p dx' 
	\leq \int_{\omega} |r_\eps|^p + |D r_\eps|^p dx' . \label{EstiTep2}
	\end{align}
	Summing up estimates~\eqref{EstiTep1} and~\eqref{EstiTep2} yields
	\[
	\|T_\eps r_\eps\|_{W^{1,p}(\omega)}\leq (1+k_1) \|r_\eps\|_{W^{1,p}(\omega_\eps)}.
	\]
	Hence, by setting $E_\eps (r_\eps) = T_\eps r_\eps$, we see that $E_\eps$ is a linear continuous extension operator from $W^{1,p}(\omega_\eps)$ to $W^{1,p}(\omega)$ that satisfies the uniform estimate~\eqref{Ineq:ExtensionOp}.
\end{proof}

\section{Convergence of the velocity and pressure}\label{Sect:conv}

In this section, we gather the a priori estimates satisfied by $(\widetilde u_\eps,\widetilde p_\eps)$ and their extensions to $\Omega$, introduce the corresponding unfolded functions and the corresponding estimates, and finally state and prove the convergence results used in Section~\ref{Sec:obtentionLimitModels} to derive the limit models.

\subsection{A priori estimates} \label{sec:estimates}

To derive a priori estimates, we rely on the following well-known results: Poincar\'e and Korn's inequalities in a thin domain of height $\eps$ (see for instance~\cite[Lemmas 0.2 and 0.3]{Tapiero2}).

\begin{lemma}\label{Poincare_lemma} Let $q\in [1,+\infty)$. There exists a constant $C>0$ such that for every $\eps>0$ and  $\varphi\in W^{1,q}_0(\Omega_\varepsilon)^3$, $1\leq q<+\infty$, 
\begin{equation}\label{Poincare}
\|\varphi\|_{L^q(\Omega_\varepsilon)^3}\leq C\varepsilon \|D \varphi\|_{L^q(\Omega_\varepsilon)^{3\times 3}},
\quad
 \|D\varphi\|_{L^q(\Omega_\varepsilon)^{3\times 3}}\leq C\|\mathbb{D}[\varphi]\|_{L^q(\Omega_\varepsilon)^{3\times 3}}.
\end{equation}
As a result, from the change of variables (\ref{dilatacion}),  every $\widetilde\varphi\in W^{1,q}_0(\widetilde\Omega_\varepsilon)^3$ satisfies the following rescaled estimates:
\begin{equation}\label{Poincare2}
\|\widetilde \varphi\|_{L^q(\widetilde \Omega_\varepsilon)^3}\leq C\varepsilon \|D_{\ep} \widetilde \varphi\|_{L^q(\widetilde \Omega_\varepsilon)^{3\times 3}},\quad
 \|D_\varepsilon\widetilde \varphi\|_{L^q(\widetilde \Omega\varepsilon)^{3\times 3}}\leq C\|\mathbb{D}_\varepsilon[\widetilde \varphi]\|_{L^q(\widetilde \Omega_\varepsilon)^{3\times 3}}.
\end{equation}
\end{lemma}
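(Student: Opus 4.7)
My plan is to establish the two inequalities directly in $\Omega_\varepsilon$ first, and then to transfer them to $\widetilde\Omega_\varepsilon$ via the change of variable $z_3=x_3/\varepsilon$. For the Poincar\'e estimate, I would exploit the fact that every $\varphi\in W^{1,q}_0(\Omega_\varepsilon)^3$ vanishes in particular on the bottom face $\omega_\varepsilon\times\{0\}\subset \partial Q_\varepsilon$. By density it suffices to treat $\varphi\in C^\infty_c(\Omega_\varepsilon)^3$, for which one can write
\[
\varphi(x',x_3)=\int_0^{x_3}\partial_{x_3}\varphi(x',s)\,ds
\]
for almost every $(x',x_3)\in\Omega_\varepsilon$. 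H\"older's inequality yields $|\varphi(x',x_3)|^q\leq \varepsilon^{q-1}\int_0^{\varepsilon}|\partial_{x_3}\varphi(x',s)|^q\,ds$, and integrating first in $x_3\in(0,\varepsilon)$ and then in $x'\in\omega_\varepsilon$ produces the first inequality in \eqref{Poincare}, with a constant independent of $\varepsilon$.

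For Korn's inequality, the key observation is that the zero extension $\overline\varphi$ of $\varphi$ to $\mathbb{R}^3$ belongs to $W^{1,q}(\mathbb{R}^3)^3$, and that $D\overline\varphi$ and $\mathbb{D}[\overline\varphi]$ coincide with the zero extensions of $D\varphi$ and $\mathbb{D}[\varphi]$. One can then invoke the classical Korn inequality on the whole space $\mathbb{R}^3$: for $1<q<+\infty$, every entry of $D\overline\varphi$ is expressible as a combination of $\mathbb{D}[\overline\varphi]$ and its images under compositions of Riesz transforms (by means of the Newton-type identity $\partial_k\partial_l\varphi_i=\partial_l\mathbb{D}_{ki}[\varphi]+\partial_k\mathbb{D}_{li}[\varphi]-\partial_i\mathbb{D}_{kl}[\varphi]$), and the $L^q$-continuity of the Riesz transforms then yields a constant $C=C(q)>0$, independent of $\varepsilon$, such that $\|D\overline\varphi\|_{L^q(\mathbb{R}^3)}\leq C\|\mathbb{D}[\overline\varphi]\|_{L^q(\mathbb{R}^3)}$. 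Restricting to $\Omega_\varepsilon$ yields the second inequality in \eqref{Poincare}.

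Finally, I would derive \eqref{Poincare2} by applying the previous estimates to $\varphi(x',x_3):=\widetilde\varphi(x',x_3/\varepsilon)$, for an arbitrary $\widetilde\varphi\in W^{1,q}_0(\widetilde\Omega_\varepsilon)^3$. The pointwise identities
\[
D\varphi(x',x_3)=(D_\varepsilon\widetilde\varphi)(x',x_3/\varepsilon),\qquad \mathbb{D}[\varphi](x',x_3)=\mathbb{D}_\varepsilon[\widetilde\varphi](x',x_3/\varepsilon)
\]
follow directly from the definitions of $D_\varepsilon$ and $\mathbb{D}_\varepsilon$, and combined with the Jacobian $dx_3=\varepsilon\,dz_3$ they make the $\varepsilon$-powers balance out so that \eqref{Poincare} transfers to \eqref{Poincare2} with the same constant. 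The only serious obstacle would have been to secure the uniformity of the Korn constant with respect to $\varepsilon$ in spite of the perforated thin geometry of $\Omega_\varepsilon$; extending by zero bypasses this difficulty entirely by reducing the inequality to the fixed ambient space $\mathbb{R}^3$.
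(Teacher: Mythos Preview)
Your proof is correct and complete. The paper does not actually prove this lemma: it simply states the two inequalities as ``well-known results'' and refers to \cite[Lemmas 0.2 and 0.3]{Tapiero2} (Boughanim--Tapi\'ero). So you are supplying a self-contained argument where the paper only gives a citation.

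Your approach is the natural one and essentially the standard route: the Poincar\'e inequality via integration in the thin direction $x_3$, the Korn inequality via zero-extension to $\mathbb{R}^3$ and the Riesz-transform identity, and then a change of variables to pass to the dilated domain. The only point worth flagging is that you (correctly) restrict the Korn step to $1<q<+\infty$, since the $L^q$-boundedness of Riesz transforms fails at $q=1$; in fact Korn's first inequality is known to fail for $q=1$ (Ornstein's counterexample), so the lemma as stated with $q\in[1,+\infty)$ is slightly loose. This is harmless for the paper, which only ever applies the lemma with $q=2$ or $q=r>1$, and your argument covers exactly that range.
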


\begin{lemma}\label{lemma_estimates} Depending on the value of $r$, the solution $\widetilde {  u}_\ep$ for the system~\eqref{system_1} satisfies the following estimates.
\begin{itemize}
\item  If $1<r<+\infty$, $r\neq 2$, then
\begin{equation}\label{estim_sol_dil10}
\displaystyle
\|{  u}_\varepsilon\|_{L^2(  \Omega_\varepsilon)^3}\leq C \ep^{{5\over 2}-\gamma}, \quad\displaystyle
\|D  {     u}_\varepsilon\|_{L^2( \Omega_\varepsilon)^{3\times 3}}\leq C\ep^{{3\over 2}-\gamma},\quad\displaystyle
\|\mathbb{D}  [{     u}_\varepsilon]\|_{L^2(\Omega_\varepsilon)^{3\times 3}}\leq C\ep^{{3\over 2}-\gamma}.
\end{equation}
 \item  If $r>2$, depending on the value of  $\gamma$:
 \begin{itemize}
 \item If $\gamma<1$, then
 \begin{equation}\label{estim_sol_dil1_r_sub0}
\displaystyle
\|{    u}_\varepsilon\|_{L^r(  \Omega_\varepsilon)^3}\leq C\ep^{-{2\over r}(\gamma-1)+{r+1\over r}}, \quad\displaystyle
\|D {    u}_\varepsilon\|_{L^r( \Omega_\varepsilon)^{3\times 3}}\leq C\ep^{-{2\over r}(\gamma-1)+{1\over r}},\quad\displaystyle
\|\mathbb{D}  [{    u}_\varepsilon]\|_{L^r( \Omega_\varepsilon)^{3\times 3}}\leq C \ep^{-{2\over r}(\gamma-1)+{1\over r}}.
\end{equation}
\item If $\gamma>1$ then
 \begin{equation}\label{estim_sol_dil1_r_super0}
\displaystyle
\|{   u}_\varepsilon\|_{L^r(  \Omega_\varepsilon)^3}\leq C\ep^{-{\gamma-1\over r-1}+{r+1\over r}}, \quad\displaystyle
\|D  {     u}_\varepsilon\|_{L^r( \Omega_\varepsilon)^{3\times 3}}\leq C\ep^{-{\gamma-1\over r-1}+{1\over r}},\quad\displaystyle
\|\mathbb{D}  [{     u}_\varepsilon]\|_{L^r( \Omega_\varepsilon)^{3\times 3}}\leq C\ep^{-{\gamma-1\over r-1}+{1\over r}}.
\end{equation}
\item If $\gamma=1$ then
 \begin{equation}\label{estim_sol_dil1_r0}
\displaystyle
\|{    u}_\varepsilon\|_{L^r(  \Omega_\varepsilon)^3}\leq C\varepsilon^{1+{1\over r}}, \quad\displaystyle
\|D  {    u}_\varepsilon\|_{L^r( \Omega_\varepsilon)^{3\times 3}}\leq C\ep^{1\over r},\quad\displaystyle
\|\mathbb{D}  [{    u}_\varepsilon]\|_{L^r( \Omega_\varepsilon)^{3\times 3}}\leq C\ep^{1\over r}.
\end{equation}
\end{itemize}
\end{itemize}

Moreover, by applying the change of variables (\ref{dilatacion}), we obtain the following estimates depending on the value of  $r$:
\begin{itemize}
\item  If $1<r<+\infty$, $r\neq 2$, then
\begin{equation}\label{estim_sol_dil1}
\displaystyle
\|{  \widetilde u}_\varepsilon\|_{L^2(\widetilde \Omega_\varepsilon)^3}\leq C \ep^{2-\gamma}, \quad\displaystyle
\|D_{\varepsilon} {   \widetilde u}_\varepsilon\|_{L^2(\widetilde\Omega_\varepsilon)^{3\times 3}}\leq C\ep^{1-\gamma},\quad\displaystyle
\|\mathbb{D}_{\varepsilon} [{   \widetilde u}_\varepsilon]\|_{L^2(\widetilde\Omega_\varepsilon)^{3\times 3}}\leq C\ep^{1-\gamma}.
\end{equation}
 \item  If $r>2$, depending on the value of  $\gamma$:
 \begin{itemize}
 \item If $\gamma<1$, then
 \begin{equation}\label{estim_sol_dil1_r_sub}
\displaystyle
\|{  \widetilde u}_\varepsilon\|_{L^r(\widetilde \Omega_\varepsilon)^3}\leq C\ep^{-{2\over r}(\gamma-1)+1}, \quad\displaystyle
\|D_{\varepsilon} {   \widetilde u}_\varepsilon\|_{L^r(\widetilde\Omega_\varepsilon)^{3\times 3}}\leq C\ep^{-{2\over r}(\gamma-1)},\quad\displaystyle
\|\mathbb{D}_{\varepsilon} [{   \widetilde u}_\varepsilon]\|_{L^r(\widetilde\Omega_\varepsilon)^{3\times 3}}\leq C \ep^{-{2\over r}(\gamma-1)}.
\end{equation}
\item If $\gamma>1$ then
 \begin{equation}\label{estim_sol_dil1_r_super}
\displaystyle
\|{  \widetilde u}_\varepsilon\|_{L^r(\widetilde \Omega_\varepsilon)^3}\leq C\ep^{-{\gamma-1\over r-1}+1}, \quad\displaystyle
\|D_{\varepsilon} {   \widetilde u}_\varepsilon\|_{L^r(\widetilde\Omega_\varepsilon)^{3\times 3}}\leq C\ep^{-{\gamma-1\over r-1}},\quad\displaystyle
\|\mathbb{D}_{\varepsilon} [{   \widetilde u}_\varepsilon]\|_{L^r(\widetilde\Omega_\varepsilon)^{3\times 3}}\leq C\ep^{-{\gamma-1\over r-1}}.
\end{equation}
\item If $\gamma=1$ then
 \begin{equation}\label{estim_sol_dil1_r}
\displaystyle
\|{  \widetilde u}_\varepsilon\|_{L^r(\widetilde \Omega_\varepsilon)^3}\leq C\varepsilon, \quad\displaystyle
\|D_{\varepsilon} {   \widetilde u}_\varepsilon\|_{L^r(\widetilde\Omega_\varepsilon)^{3\times 3}}\leq C,\quad\displaystyle
\|\mathbb{D}_{\varepsilon} [{   \widetilde u}_\varepsilon]\|_{L^r(\widetilde\Omega_\varepsilon)^{3\times 3}}\leq C.
\end{equation}
\end{itemize}
\end{itemize}
\end{lemma}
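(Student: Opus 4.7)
The plan is to obtain these bounds by the classical energy method applied to the weak formulation of~\eqref{system_1}. Taking $u_\eps$ itself as test function, the pressure contribution vanishes by incompressibility and the boundary terms vanish by the homogeneous Dirichlet condition, leaving the energy identity
\begin{equation*}
\eps^\gamma \int_{\Omega_\eps}\eta_r(\mathbb{D}[u_\eps])\,|\mathbb{D}[u_\eps]|^2\, dx = \int_{\Omega_\eps}f\cdot u_\eps\, dx.
\end{equation*}
I would control the left-hand side by exploiting the two additive contributions in the Carreau law~\eqref{Carreaulaw}: the term $\eta_\infty>0$ provides the universal $L^2$ lower bound $\eta_r(D)|D|^2\geq \eta_\infty|D|^2$, and, when $r>2$, the pointwise inequality $(1+\lambda|D|^2)^{(r-2)/2}|D|^2\geq \lambda^{(r-2)/2}|D|^r$ furnishes the additional $L^r$ lower bound
\begin{equation*}
\eta_r(D)|D|^2\geq \eta_\infty|D|^2+(\eta_0-\eta_\infty)\lambda^{(r-2)/2}|D|^r.
\end{equation*}

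To handle the right-hand side I would combine H\"older's inequality with the assumption $f\in L^\infty(\omega)^2$ (which gives $\|f\|_{L^{q'}(\Omega_\eps)}\leq C\eps^{1/q'}$ since $|\Omega_\eps|\leq C\eps$) and the thin-layer Poincar\'e and Korn inequalities of Lemma~\ref{Poincare_lemma}, to obtain
\begin{equation*}
\int_{\Omega_\eps}f\cdot u_\eps\, dx \leq C\,\eps^{1+1/q'}\,\|\mathbb{D}[u_\eps]\|_{L^q(\Omega_\eps)}
\end{equation*}
for any $q\in(1,+\infty)$, in particular for $q=2$ or $q=r$. Matching the lower and upper bounds with $q=2$ directly yields $\|\mathbb{D}[u_\eps]\|_{L^2}\leq C\eps^{3/2-\gamma}$, and then Lemma~\ref{Poincare_lemma} gives~\eqref{estim_sol_dil10}. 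In the regime $r>2$, $\gamma>1$, the appropriate pairing is the $L^r$ lower bound against the $q=r$ upper bound, giving $\|\mathbb{D}[u_\eps]\|_{L^r}^{r-1}\leq C\eps^{1+1/r'-\gamma}$ and hence~\eqref{estim_sol_dil1_r_super0}. In the regime $r>2$, $\gamma<1$, I would instead couple the $L^r$ lower bound on the left with the $q=2$ upper bound on the right, invoking the already-obtained $L^2$ estimate, which yields $\eps^\gamma\|\mathbb{D}[u_\eps]\|_{L^r}^r\leq C\eps^{3-\gamma}$ and thus~\eqref{estim_sol_dil1_r_sub0}; the threshold case $\gamma=1$, giving~\eqref{estim_sol_dil1_r0}, is reached by either strategy.

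The rescaled estimates~\eqref{estim_sol_dil1}--\eqref{estim_sol_dil1_r} follow immediately from the change of variables~\eqref{dilatacion}: passing from $x_3$ to $z_3=x_3/\eps$ multiplies each $L^q$ norm by $\eps^{-1/q}$, while the vertical derivative acquires a further factor $\eps^{-1}$ through $\partial_{x_3}=\eps^{-1}\partial_{z_3}$, so that the exponents in~\eqref{estim_sol_dil10}--\eqref{estim_sol_dil1_r0} map to those in~\eqref{estim_sol_dil1}--\eqref{estim_sol_dil1_r} by elementary arithmetic. The only delicate point is ensuring that the constants in the Poincar\'e and Korn inequalities (Lemma~\ref{Poincare_lemma}) and in the pointwise lower bound on $\eta_r(D)|D|^2$ are independent of $\eps$; the former is granted by the thin-domain version of those inequalities, and the latter by the explicit form of~\eqref{Carreaulaw}.
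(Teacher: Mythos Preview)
Your proposal is correct and follows exactly the standard energy-method argument that the paper invokes by reference to~\cite[Lemma~3.2]{Anguiano_Bonn_SG_Carreau2}: test~\eqref{system_1} with $u_\eps$, exploit the two coercivity contributions in the Carreau law, bound the forcing via Lemma~\ref{Poincare_lemma}, and then rescale. One cosmetic remark: in the passage from $\Omega_\eps$ to $\widetilde\Omega_\eps$, the operator $D_\eps$ already absorbs the factor $\eps^{-1}$ on the vertical derivative, so the only change of exponent is the Jacobian factor $\eps^{-1/q}$---there is no ``further'' factor beyond that, but this does not affect your conclusion.
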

\begin{proof}
The proof is similar to~\cite[Lemma 3.2]{Anguiano_Bonn_SG_Carreau2}, considering that  the Poincar\'e and Korn inequalities (\ref{Poincare2}) are the same as in the case $\ell=1$ described in \cite{Anguiano_Bonn_SG_Carreau2}.

\end{proof}

\begin{remark}\label{Rem:extensionVelocity}
We extend the velocity $\widetilde u_\eps$ by zero in $\Omega\setminus\widetilde \Omega_\eps$ (which is compatible with the homogeneous boundary condition on $\partial \Omega\cup \partial T_\eps$), and   denote the extension by the same symbol. Obviously, the estimates given in Lemma \ref{lemma_estimates} remain valid, and the extension $\widetilde u_\eps$ is divergence free as well.
\end{remark}

 Next, we decompose the pressure $p_\varepsilon$ using the results from~\cite{CLS} in two pressures $p_\ep^0$ and $p_\ep^1$ and derive  the corresponding estimates. 
\begin{lemma}\label{lemma_est_P} Consider $q=\max\{2,r\}$ and $q'$ the conjugate exponent of $q$, that is, such that $1/q+1/q'=1$. Then, the pressure $p_\ep\in L^q_0(\Omega_\ep)$ solution to~\eqref{system_1} can be decomposed as
\begin{equation}\label{decomposition_original}p_\ep=  p_\ep^0+  p_\ep^1,
\end{equation}
where $p_\ep^0\in W^{1,q'}(\omega_\ep)$ and $p_\ep^1\in L^{q'}(\Omega_\ep)$ can be estimated as follows:
\begin{equation}\label{estim_P_original2}
\|p_\ep^0\|_{W^{1,q'}(\omega_\ep)}\leq C,\quad \|p_\ep^1\|_{L^{q'}(\Omega_\ep)}\leq C\ep^{{1\over q'}+1}.
\end{equation}
Moreover, by applying the change of variable~\eqref{dilatacion}, the rescaled pressure $\widetilde p_\ep^1$ satisfies
\begin{equation}\label{esti_P}
 \|\widetilde p_\ep^1\|_{L^{q'}(\widetilde\Omega_\ep)^3}\leq C\ep.
\end{equation}
\end{lemma}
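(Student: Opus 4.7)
My plan is to apply the decomposition theorem \cite[Theorem 3.2 and Corollary 3.4]{CLS} to $p_\ep$. The hypotheses required by that theorem---namely the uniform Poincar\'e-Wirtinger inequality on $\omega_\ep$ (Lemma~\ref{Prop:uniformPoincareWirtinger}), the uniform cone condition (Lemma~\ref{Prop:uniformConeCondition}), and the uniform local Poincar\'e-Wirtinger inequality on $\widehat B(x',\ep)$ (Lemma~\ref{Property3.5})---have all been established in Section~\ref{sec:geometry}. These results allow us to extract a splitting $p_\ep=p_\ep^0+p_\ep^1$ with $p_\ep^0\in W^{1,q'}(\omega_\ep)$ depending only on $x'$ and $p_\ep^1\in L^{q'}(\Omega_\ep)$, together with a quantitative bound of the schematic form
\[
\|p_\ep^0\|_{W^{1,q'}(\omega_\ep)}+\ep^{-(1/q'+1)}\|p_\ep^1\|_{L^{q'}(\Omega_\ep)}\leq C\,\ep^{-1/q'}\|\nabla p_\ep\|_{W^{-1,q'}(\Omega_\ep)^3},
\]
with $C$ independent of $\ep$. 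The problem is thus reduced to proving that $\|\nabla p_\ep\|_{W^{-1,q'}(\Omega_\ep)^3}$ behaves like $\ep^{1/q'+1}$.

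To establish this last bound, I would exploit the variational formulation of \eqref{system_1}: for every $\phi\in W^{1,q}_0(\Omega_\ep)^3$,
\[
\langle \nabla p_\ep,\phi\rangle=\int_{\Omega_\ep}f\cdot\phi\,dx-\ep^\gamma\int_{\Omega_\ep}\eta_r(\mathbb{D}[u_\ep])\,\mathbb{D}[u_\ep]:\mathbb{D}[\phi]\,dx.
\]
For the forcing term, the thinness of $\Omega_\ep$ together with Lemma~\ref{Poincare_lemma} yields a bound of order $\ep^{1/q'+1}\|\nabla\phi\|_{L^q(\Omega_\ep)}$, using $\|f\|_{L^\infty}<\infty$ and the factor $\ep$ coming from Poincar\'e. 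For the viscous term, the growth property $\eta_r(M)|M|\leq C(|M|+|M|^{r-1})$ built into the Carreau law \eqref{Carreaulaw}, combined with H\"older's inequality and the a priori velocity estimates of Lemma~\ref{lemma_estimates}, gives a bound of the same order. The argument splits naturally into the subcase $1<r<2$, where we take $q=2$ and use the fact that $\eta_r$ is bounded above by $\eta_0$, and the subcase $r>2$, where we take $q=r$ and use the $L^r$-bound on $\mathbb{D}[u_\ep]$; in each regime, the exponents appearing in Lemma~\ref{lemma_estimates} have been tuned so that the $\ep^\gamma$-prefactor cancels the growth of the velocity bound, leaving precisely the desired $\ep^{1/q'+1}$ scaling.

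Combining these two steps yields \eqref{estim_P_original2}. The rescaled estimate \eqref{esti_P} is then immediate from the change of variables $z_3=x_3/\ep$, which multiplies the $L^{q'}$-norm over a three-dimensional set by the factor $\ep^{-1/q'}$:
\[
\|\widetilde p_\ep^1\|_{L^{q'}(\widetilde\Omega_\ep)}=\ep^{-1/q'}\|p_\ep^1\|_{L^{q'}(\Omega_\ep)}\leq C\,\ep^{-1/q'}\,\ep^{1/q'+1}=C\ep.
\]
The main obstacle I anticipate is the careful bookkeeping of the $\ep$-powers across the four regimes $(r\lessgtr 2,\ \gamma\lessgtr 1)$, together with the verification that the nonlinear stress $\eta_r(\mathbb{D}[u_\ep])\mathbb{D}[u_\ep]$ really belongs to $L^{q'}(\Omega_\ep)^{3\times 3}$ with the claimed norm bound; this is precisely why the exponent $q=\max\{2,r\}$ appears in the statement. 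Once this is set up, the heterogeneous exponents in Lemma~\ref{lemma_estimates} collapse to the single uniform bound displayed in the lemma.
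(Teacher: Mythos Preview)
Your proposal is correct and follows essentially the same two-step approach as the paper: apply \cite[Corollary~3.4]{CLS} (whose hypotheses are precisely Lemmas~\ref{Prop:uniformPoincareWirtinger}--\ref{Property3.5}) to obtain the decomposition, then bound $\|\nabla p_\ep\|_{W^{-1,q'}(\Omega_\ep)^3}\leq C\ep^{1/q'+1}$ via the variational formulation, the Poincar\'e inequality~\eqref{Poincare}, and the velocity estimates of Lemma~\ref{lemma_estimates}, splitting into $1<r<2$ (where $\eta_r\leq\eta_0$) and $r>2$ (where the growth $\eta_r(M)|M|\leq C(|M|+|M|^{r-1})$ is used). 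One minor correction to your ``schematic form'': the decomposition bound from \cite{CLS} actually reads $\ep^{1/q'+1}\|p_\ep^0\|_{W^{1,q'}(\omega_\ep)}+\|p_\ep^1\|_{L^{q'}(\Omega_\ep)}\leq C\|\nabla p_\ep\|_{W^{-1,q'}(\Omega_\ep)^3}$, so the prefactor on the right should be $\ep^{-(1/q'+1)}$ rather than $\ep^{-1/q'}$---with your (correct) target $\|\nabla p_\ep\|_{W^{-1,q'}}\leq C\ep^{1/q'+1}$ this yields exactly~\eqref{estim_P_original2}.
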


\begin{proof} 

The proof is divided into two steps. In the first step, we decompose the pressure $p_\ep$ into the sum of two different pressures, $p_\ep^0$ and $p_\ep^1$ and estimate both pressures with respect to the norm of $\nabla p_\ep$ in $W^{-1,q}(\Omega_\ep)^3$. In the second step, we derive estimates for $\nabla p_\ep$, and consequently, for $p_\ep^0$ and $p_\ep^1$.

{\it Step 1. Decomposition of the pressure.} Lemmas~\ref{Prop:uniformPoincareWirtinger},~\ref{Prop:uniformConeCondition} and~\ref{Property3.5} allow us to apply~\cite[Corollary 3.4]{CLS} and deduce that the pressure $p_\ep\in L^{q'}_0(\Omega_\ep)$ can be decomposed as in (\ref{decomposition_original}) with $p_\ep^0\in W^{1,q'}_0(\omega_\ep)$ and $p_\ep^1\in L^{q'}(\Omega_\ep)$, satisfying the following estimates 
\begin{equation}\label{estim_decomposition}\ep^{{1\over q'}+1}\|p_\ep^0\|_{W^{1,q'}(\omega_\ep)}+\|p_\ep^1\|_{L^{q'}(\Omega_\ep)}\leq \|\nabla p_\ep\|_{W^{-1,q'}(\Omega_\ep)}.
\end{equation}
 In~\cite[Theorem 3.2 and Corollary 3.4]{CLS}, the scaling of the pressure is $p_\ep={1\over \ep}\pi_\ep^0+\pi_\ep^1$ satisfying
$$\ep^{{1\over q'}}\|\pi_\ep^0\|_{W^{1,q'}(\omega_\ep)}+  \|\pi_\ep^1\|_{L^{q'}(\Omega_\ep)}\leq C\|\nabla p_\ep\|_{W^{-1,q'}(\Omega_\ep)^3}.$$
Here, we rescale  $p_\ep^0=\ep^{-1} \pi_\ep^0$ and $p_\ep^1= \pi_\ep^1$ and thus, we obtain equation \eqref{estim_decomposition}.
 \\

{\it Step 2. Estimates of $p_\ep^0$ and $p_\ep^1$.} Let us prove the  estimates for the pressures given in equatin \eqref{estim_P_original2}. To do this, according to equation \eqref{estim_decomposition}, we only have to estimate $\nabla p_\eps$. For $\varphi\in W^{1,q}_0(\Omega_\ep)$, the weak formulation of the Stokes system~\eqref{system_1} is
\begin{equation}\label{Fep2}
\begin{array}{rl}
\displaystyle
\langle\nabla p_\ep,\varphi\rangle_{W^{-1,q'}(\Omega_\ep),W^{1,q}_0(\Omega_\ep)}=&\displaystyle -\varepsilon^\gamma(\eta_0-\eta_\infty)\int_{\Omega_\varepsilon}(1+\lambda|\mathbb{D}[{ u}_\varepsilon]|^2)^{{r\over 2}-1}\mathbb{D}[ { u}_\varepsilon]:\mathbb{D}[ \varphi]\,dx\\
\noame
&\displaystyle-\varepsilon^\gamma\eta_\infty\int_{\Omega_\varepsilon}\mathbb{D}[ { u}_\varepsilon]:\mathbb{D}[ \varphi]\,dx+\int_{\Omega_\varepsilon}{ f}'\cdot \varphi'dx.
\end{array}
\end{equation}

First, assume that $1<r<2$, hence $q=q'=2$. Considering that $(1+\lambda|\mathbb{D} [ { u}_\ep |^2)^{{r\over 2}-1}\leq 1$, and applying the Cauchy-Schwarz inequality, we obtain
$$\begin{array}{rl}
\displaystyle \left|\int_{\Omega_\ep}(1+\lambda|\mathbb{D} [ { u}_\ep]|^2)^{{r\over 2}-1}\mathbb{D} [ { u}_\ep]:\mathbb{D} [    \varphi]\,dx\right| \leq &\displaystyle \int_{\Omega_\ep}|\mathbb{D} [ { u}_\ep]||\mathbb{D} [    \varphi] |dx\\
\noame
\leq &\displaystyle \|\mathbb{D} [  { u}_\ep]\|_{L^2( \Omega_\ep)^{3\times 3}}\|\mathbb{D} [  \varphi]\|_{L^2( \Omega_\ep)^{3\times 3}}.
\end{array}$$
Using last estimate in (\ref{estim_sol_dil10}), we
obtain
\begin{equation}\label{Step2_2} 
\displaystyle \left|\ep^\gamma(\eta_0-\eta_\infty)\int_{\Omega_\ep}(1+\lambda|\mathbb{D} [ { u}_\ep]|^2)^{{r\over 2}-1}\mathbb{D} [ { u}_\ep]:\mathbb{D} [ \varphi]\,dx\right| \leq  C\ep^{3\over 2}\| \varphi\|_{H^1_0(\Omega_\ep)^3}
\end{equation}
and
\begin{equation}\label{Step2_4} 
\displaystyle \left|\ep^\gamma \eta_\infty\int_{ \Omega_\ep} \mathbb{D} [ { u}_\ep]:\mathbb{D} [  \varphi]\,dx\right| \leq  C\ep^{3\over 2}\| \varphi\|_{H^1_0(\Omega_\ep)^3}.
\end{equation}
Because ${ f}'={ f}'(x')$ is in $L^\infty(\omega)^2$  and using the Poincar\'e inequality (\ref{Poincare}), we obtain
\begin{equation}\label{Step2_5} 
\displaystyle \left| \int_{ \Omega_\ep} { f}'\cdot   \varphi'\,dx\right| \leq  C\ep^{1\over 2}\|\varphi\|_{L^2(\Omega_\ep)^3}\leq C\ep^{3\over 2}\| D\varphi\|_{L^2(\Omega_\ep)^{3\times 3}}\leq C\ep^{3\over 2}\| \varphi\|_{H^1_0(\Omega_\ep)^3}.
\end{equation}
Returnin to expression (\ref{Fep2}) with $q'=2$, we deduce from (\ref{Step2_2})--(\ref{Step2_5}) the estimate
$$\|\nabla p_\ep\|_{H^{-1}(\Omega_\ep)^3}\leq C\ep^{3\over 2}$$ which, combined with~\eqref{estim_decomposition}, yields (\ref{estim_P_original2}).\\

 	Now, we derive the estimates for the pressure for $r> 2$. In this case, $q=r$ so $q'=r'$. Recall that $\Omega$ is the fixed domain defined in equation~\eqref{OmegaTilde}. Because $r>2$, $L^r(\Omega)$ is continuously embedded in $L^2(\Omega)$, and since the rescaled function $\widetilde \varphi$ is extended by zero to $\Omega$, there exists a constant $C>0$ such that for every $\eps>0$ and $\varphi\in W^{1,r}_0(\Omega_\ep)$,
 	\begin{align*}
 		\|\mathbb{D}_\ep[ \widetilde \varphi]\|_{L^2(\widetilde \Omega_\ep)^{3\times 3}}& = \|\mathbb{D}_\ep[ \widetilde \varphi]\|_{L^2(\Omega)^{3\times 3}}   \\
 		& \leq C \|\mathbb{D}_\ep[ \widetilde \varphi]\|_{L^r(\Omega)^{3\times 3}} = C \|\mathbb{D}_\ep[ \widetilde \varphi]\|_{L^r(\widetilde \Omega_\ep)^{3\times 3}}.
 	\end{align*}
 	Taking into account that for $s\in \{2,r\}$,
 	\[
 	\|\mathbb{D}[\varphi]\|_{L^s(\Omega_\ep)^{3\times 3}}=\ep^{{1\over s}}\|\mathbb{D}_\ep[ \widetilde \varphi]\|_{L^s(\widetilde \Omega_\ep)^{3\times 3}},
 	\]
 	we deduce that for every $\varphi\in W^{1,r}_0(\Omega_\ep)$,
 	\begin{equation}\label{ScaledContinuousLrL2}
 	\|\mathbb{D}[\varphi]\|_{L^2( \Omega_\ep)^{3\times 3}} \leq C\eps^{\frac{1}{2}-\frac{1}{r}} \|\mathbb{D}[\varphi]\|_{L^r( \Omega_\ep)^{3\times 3}}.
 	\end{equation}
 	Using~\eqref{ScaledContinuousLrL2}, H\"older's inequality, and the inequality $(1+X)^{\alpha}\leq C(1+X^\alpha)$ (which is valid for $X\geq 0$, and $\alpha>0$), we obtain
 	$$\begin{array}{l}
 		\displaystyle
 		\int_{ \Omega_\ep}\left|(1+\lambda|\mathbb{D}[  u_\ep]|^2)^{{r\over 2}-1}\mathbb{D}[  u_\ep]:\mathbb{D}[   \varphi]\right|dx\\
 		\noame
 		\displaystyle
 		\leq C\left(
 		\int_{ \Omega_\ep}|\mathbb{D} [ u_\ep]||\mathbb{D} [  \varphi]|\,dx +\int_{  \Omega_\ep}|\mathbb{D} [ u_\ep]|^{r-1}|\mathbb{D} [ \varphi]|\,dx 
 		\right)\\
 		\noame
 		\displaystyle
 		\leq C\left(
 		\|\mathbb{D}[ u_\ep]\|_{L^2( \Omega_\ep)^{3\times 3}}\|\mathbb{D} [ \varphi]\|_{L^2(\Omega_\ep)^{3\times 3}}+\|\mathbb{D}[ u_\ep]\|^{r-1}_{L^r( \Omega_\ep)^{3\times 3}}\|\mathbb{D}[  \varphi]\|_{L^r(  \Omega_\ep)^{3\times 3}}
 		\right)\\
 		\noame
 		\displaystyle\leq C\left( \eps^{\frac{1}{2}-\frac{1}{r}} 		\|\mathbb{D}[  u_\ep]\|_{L^2( \Omega_\ep)^{3\times 3}}+\|\mathbb{D} [ u_\ep]\|^{r-1}_{L^r(\Omega_\ep)^{3\times 3}}
 		\right)\|\mathbb{D}[   \varphi]\|_{L^r(\Omega_\ep)^{3\times 3}}.
 	\end{array}$$
 	By~\eqref{Poincare} with $q=r$, $\|\mathbb{D}[   \varphi]\|_{L^r(\Omega_\ep)^{3\times 3}}\leq C\|\varphi\|_{W^{1,r}_0(\Omega_\ep)}$. Therefore, in the case $\gamma<1$, by combining the previous estimate with the third estimates in~\eqref{estim_sol_dil1} and~\eqref{estim_sol_dil1_r_sub}, we obtain
 	\begin{align*}
 	& 	\left|  \ep^{\gamma}(\eta_0-\eta_{\infty})	\int_{ \Omega_\ep}\left|(1+\lambda|\mathbb{D}[  u_\ep]|^2)^{{r\over 2}-1}\mathbb{D}[  u_\ep]:\mathbb{D}[   \varphi]\right|dx \right| \\
 	&\leq C\eps^{\gamma}\left( \eps^{2-\frac{1}{r}-\gamma} + \eps^{-\frac{2}{r}(\gamma-1)(r-1)+\frac{r-1}{r}} \right) \|\varphi\|_{W^{1,r}_0(\Omega_\ep)}\\
 		& \leq C\left( \eps^{1+\frac{1}{r'}} + \eps^{-\frac{2}{r}(\gamma-1)(r-1)+\frac{1}{r'}+\gamma} \right) \|\varphi\|_{W^{1,r}_0(\Omega_\ep)}.
 	\end{align*}
 	Since $\gamma<1$,  observe that $-{2\over r}(\gamma-1)>-{\gamma-1\over r-1}$, hence $\eps^{-{2\over r}(\gamma-1)}\leq \eps^{-{\gamma-1\over r-1}}$ and
 	\[
 	\eps^{-\frac{2}{r}(\gamma-1)(r-1)+\frac{1}{r'}+\gamma} \leq \ep^{-(\gamma-1)+\frac{1}{r'}+\gamma} = \ep^{1+\frac{1}{r'}}.
 	\]
 	This yields
 	\begin{equation}
 		\left|  \ep^{\gamma}(\eta_0-\eta_{\infty})	\int_{ \Omega_\ep}\left|(1+\lambda|\mathbb{D}[  u_\ep]|^2)^{{r\over 2}-1}\mathbb{D}[  u_\ep]:\mathbb{D}[   \varphi]\right|dx \right|  \nonumber\\
 	\leq  C\ep^{1+\frac{1}{r'}} \|\varphi\|_{W^{1,r}_0(\Omega_\ep)}. \label{step3_1}
 	\end{equation}
 	The other terms are treated very similarly, by writing
 	\begin{align}
 		\left| \eps^{\gamma} \eta_{\infty}\int_{\Omega_\ep} \mathbb{D}[  u_\ep]:\mathbb{D}[  \varphi]\, dx\right|
 		& \leq C\eps^{\gamma} \|\mathbb{D}[  u_\ep]\|_{L^2( \Omega_\ep)^{3\times 3}} \|\mathbb{D}[  \varphi]\|_{L^2( \Omega_\ep)^{3\times 3}} \nonumber\\
 		& \leq C\ep^{\gamma} \ep^{\frac{3}{2}-\gamma} \ep^{\frac{1}{2}-\frac{1}{r}}\|\mathbb{D}[  \varphi]\|_{L^r( \Omega_\ep)^{3\times 3}}\nonumber \\
 		& \leq \ep^{2-\frac{1}{r}}\|\mathbb{D}[  \varphi]\|_{L^r( \Omega_\ep)^{3\times 3}}\nonumber \\
 		&\leq \ep^{1+\frac{1}{r'}}\|\varphi\|_{W^{1,r}_0(\Omega_\ep)}, \label{step3_2}\\
 		\left| \int_{\Omega_\ep}f'\cdot \varphi' \,dx \right| & \leq C \int_{\Omega_\ep}|\varphi|\nonumber\\
 		&\leq C|\Omega_\ep|^{\frac{1}{r'}}\|\varphi\|_{L^r(\Omega_\ep)}\nonumber\\
 		&\leq C \ep^{\frac{1}{r'}}\eps \|\nabla \varphi\|_{L^r(\Omega_\ep)} \nonumber\\
 		& \leq C\ep^{1+\frac{1}{r'}} \|\varphi\|_{W^{1,r}_0(\Omega_\ep)}. \label{step3_3}
 	\end{align}

 Returning to expression~\eqref{Fep2}, we deduce from \eqref{step3_1}--\eqref{step3_3} the  estimate
\begin{equation}\label{estimDpr}\|\nabla p_\ep\|_{W^{-1,r'}(\Omega_\ep)^3}\leq C\ep^{{1\over r'}+1}.
\end{equation}
If $\gamma\geq 1$, by similar arguments, we also deduce (\ref{estimDpr}).
\\

In all cases, using the estimates for the decomposition of $p_\ep$ given in equation~\eqref{estim_decomposition}, we deduce the estimates (\ref{estim_P_original2}).

\end{proof}

The following result is an immediate corollary of estimate~$(\ref{estim_P_original2})_1$ and uniform estimate~\eqref{Ineq:ExtensionOp} satisfied by the extension operator introduced in Lemma~\ref{Lemma:extensionforpressure}.
\begin{corollary}\label{Coro:extensionp0}
Let $E_\eps$ be the extension operator defined in Lemma~\ref{Lemma:extensionforpressure}. There exists a constant $C>0$ such that for every $\eps>0$,
\[
\|E_\eps (p^0_\eps) \|_{W^{1,q'}(\omega)}\leq C.
\]
\end{corollary}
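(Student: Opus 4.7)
The proof is essentially a direct chaining of two already-established estimates, so the plan is very short. First, I would invoke Lemma~\ref{Lemma:extensionforpressure} with $p=q'$: this guarantees that for each $\eps>0$, the operator $E_\eps:W^{1,q'}(\omega_\eps)\to W^{1,q'}(\omega)$ is well-defined, linear, and continuous with an operator norm bounded by a constant $C>0$ that is \emph{independent of $\eps$}. In particular, applying it to $p_\eps^0\in W^{1,q'}(\omega_\eps)$ (which is meaningful by Lemma~\ref{lemma_est_P}) yields
\[
\|E_\eps(p_\eps^0)\|_{W^{1,q'}(\omega)} \leq C\,\|p_\eps^0\|_{W^{1,q'}(\omega_\eps)}.
\]

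Next, I would simply plug in the first estimate of~\eqref{estim_P_original2} from Lemma~\ref{lemma_est_P}, namely $\|p_\eps^0\|_{W^{1,q'}(\omega_\eps)}\leq C$, where $C$ does not depend on $\eps$ either. Combining the two bounds gives the desired uniform estimate
\[
\|E_\eps(p_\eps^0)\|_{W^{1,q'}(\omega)} \leq C,
\]
which is exactly the claim of the corollary.

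There is no real obstacle here, since all the work has been done upstream: the uniformity in $\eps$ of the extension operator was the crux of Lemma~\ref{Lemma:extensionforpressure} (relying on Acerbi's extension theorem together with assumption~\eqref{Assumption:distanceBoundary}), and the uniform $W^{1,q'}(\omega_\eps)$-bound for $p_\eps^0$ was the central output of the pressure decomposition step in Lemma~\ref{lemma_est_P} (built on the CLS decomposition via Lemmas~\ref{Prop:uniformPoincareWirtinger},~\ref{Prop:uniformConeCondition} and~\ref{Property3.5}). The corollary merely records the product of these two uniform bounds.
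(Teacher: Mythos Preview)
Your proposal is correct and follows exactly the same approach as the paper, which explicitly presents the corollary as an immediate consequence of estimate~$(\ref{estim_P_original2})_1$ and the uniform bound~\eqref{Ineq:ExtensionOp} from Lemma~\ref{Lemma:extensionforpressure}. There is nothing to add.
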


\begin{remark}
	In the sequel, when there is no risk of confusion, we denote by the same symbol $p_\ep^0$ the pressure in $W^{1,q'}(\omega_\ep)$ and its extension to $W^{1,q'}(\omega)$.
\end{remark}

\subsection{Introduction of the unfolded functions and corresponding estimates}\label{sec:unfolding}
 The change of variables (\ref{dilatacion}) does not capture the microstructure of the domain $\widetilde\Omega_\ep$. To do so,  we use an adaptation of the unfolding method (see \cite{Ciora, Ciora2} for more details on the classical version) introduced to this context in~\cite{Anguiano_SG}, and particularize it to the case with period $\ep^\ell$. 
 
 Given $(\widetilde{\varphi}_{\varepsilon},  \zeta_\varepsilon, \widetilde \psi_\varepsilon) \in L^q(\widetilde \Omega_\ep)^3 \times L^{q'}(\omega_\ep)\times L^{q'}(\widetilde \Omega_\ep)$, $1\leq q<+\infty$ and $1/q+1/q'=1$, we define $(\widehat{\varphi}_{\varepsilon},   \widehat \zeta_\varepsilon,\widehat \psi_\varepsilon)\in L^q(\omega\times Z_f)^3\times L^{q'}(\omega\times Z'_f)\times L^{q'}(\omega\times Z_f)$ by
\begin{equation}\label{phihat}
\begin{array}{l}
\displaystyle\widehat{\varphi}_{\varepsilon}(x^{\prime},z)=\widetilde{\varphi}_{\varepsilon}\left( {\varepsilon^\ell}\kappa\left(\frac{x^{\prime}}{{\varepsilon^\ell}} \right)+{\varepsilon^\ell}z^{\prime},z_3 \right),\quad \hbox{a.e. }(x',z)\in \omega\times Z_f,\\
\noame
\displaystyle 
\widehat{\zeta}_{\varepsilon}(x^{\prime},z')= {\zeta}_{\varepsilon}\left( {\varepsilon^\ell}\kappa\left(\frac{x^{\prime}}{{\varepsilon^\ell}} \right)+{\varepsilon^\ell}z^{\prime}\right),\quad \hbox{ a.e. }(x^{\prime},z')\in \omega\times Z'_f,\\
\noame
\displaystyle 
\widehat{\psi}_{\varepsilon}(x^{\prime},z)=\widetilde{\psi}_{\varepsilon}\left( {\varepsilon^\ell}\kappa\left(\frac{x^{\prime}}{{\varepsilon^\ell}} \right)+{\varepsilon^\ell}z^{\prime},z_3 \right),\quad \hbox{ a.e. }(x^{\prime},z)\in \omega\times Z_f.\\
\end{array}\end{equation}
In these definitions, we have extended all functions $\widetilde \varphi_\varepsilon, \zeta_\varepsilon$ and $\widetilde \psi_\varepsilon$ by zero outside $\omega$. The function $\kappa:\mathbb{R}^2\to \mathbb{Z}^2$ is defined by 
$$\kappa(x')=k'\Longleftrightarrow x'\in Z'_{f_{k'},1},\quad\forall\,k'\in\mathbb{Z}^2.$$

\begin{remark}\label{remarkCV}Let us make the following comments related to the definitions~\eqref{phihat}:
\begin{itemize}
\item The function $\kappa$ is well defined up to a set of zero measure in $\mathbb{R}^2$ (the set $\cup_{k'\in \mathbb{Z}^2}\partial Z'_{f_{k'},1}$). Moreover, for every $\varepsilon>0$, we have
$$\kappa\left({x'\over \varepsilon^\ell}\right)=k'\Longleftrightarrow x'\in Z'_{f_{k'},\varepsilon^\ell}.$$

\item For $k^{\prime}\in \mathcal{T}_{\varepsilon}$, the restriction of $(\widehat{\varphi}_{\varepsilon},   \widehat \psi_\varepsilon)$  to $Z^{\prime}_{k^{\prime},{\varepsilon^\ell}}\times Z_f$  (resp. $\widehat \zeta_\varepsilon$ to $Z^{\prime}_{k^{\prime},{\varepsilon^\ell}}\times Z'_f$) does not depend on $x^{\prime}$, whereas as a function of $z$ it is obtained from $(\widetilde{\varphi}_{\varepsilon},  \widetilde{\psi}_{\varepsilon})$  (resp. $\widetilde \zeta_\ep$) using the change of variables 
\begin{equation}\label{CVunfolding}
\displaystyle z^{\prime}=\frac{x^{\prime}- {\varepsilon^\ell}k^{\prime}}{{\varepsilon^\ell}},\end{equation}
which transforms $Z_{f_{k^{\prime}}, {\varepsilon^\ell}}$ into $Z_f$ (resp. $Z'_{f_{k^{\prime}}, {\varepsilon^\ell}}$ into $Z'_f$).
\end{itemize}
\end{remark}

 The next result can be proven in the same manner as in~\cite[Lemma 4.9]{Anguiano_SG}.
\begin{lemma}\label{estimates_relation}
The following estimates relate  $(\widehat \varphi_\varepsilon, \widehat \zeta_\varepsilon, \widehat \psi_\varepsilon)$ to $(\widetilde \varphi_\varepsilon,   \zeta_\varepsilon, \widetilde \psi_\varepsilon)$.
\begin{itemize}
\item  For every $\widetilde\varphi_\varepsilon \in L^q(\widetilde\Omega_\varepsilon)^3$, $1\leq q<+\infty$,
$$\|\widehat \varphi_\varepsilon\|_{L^q(\omega\times Z_f)^3}\leq  \|\widetilde \varphi_\varepsilon\|_{L^q(\widetilde \Omega_\varepsilon)^3},$$
where $\widehat \varphi_\varepsilon$ is given by (\ref{phihat})$_1$. Similarly, for every $\zeta_\varepsilon\in L^{q'}(\omega_\varepsilon)$ and $\widetilde\psi_\ep \in L^{q'}(\widetilde \Omega_\varepsilon)$ the functions $\widehat \zeta_\varepsilon$ and $\widehat \psi_\varepsilon$, respectively given by (\ref{phihat})$_2$ and (\ref{phihat})$_3$ satisfy
$$\|\widehat \zeta_\varepsilon\|_{L^{q'}(\omega\times Z'_f)} \leq \| \zeta_\varepsilon\|_{L^{q'}(\omega_\varepsilon)},\quad\|\widehat \psi_\varepsilon\|_{L^{q'}(\omega\times Z_f)} \leq \|\widetilde \psi_\varepsilon\|_{L^{q'}(\widetilde \Omega_\varepsilon)}.$$
\item  For every $\widetilde \varphi_\varepsilon\in W^{1,q}(\widetilde\Omega_\varepsilon)^3$, $1\leq q<+\infty$, the function $\widehat \varphi_\varepsilon$ given by (\ref{phihat})$_1$ belongs to $L^q(\omega;W^{1,q}(Z_f)^3)$, and
$$\|D_{z'} \widehat \varphi_\varepsilon\|_{L^q(\omega\times Z_f)^{3\times 2}} \leq \varepsilon^\ell  \|D_{x'}\widetilde \varphi_\varepsilon\|_{L^q(\widetilde\Omega_\varepsilon)^{3\times 2}},\quad \|\partial_{z_3} \widehat \varphi_\varepsilon\|_{L^q(\omega\times Z_f)^{3 }} \leq  \|\partial_{z_3}\widetilde \varphi_\varepsilon\|_{L^q(\widetilde\Omega_\varepsilon)^{3}},$$
$$ \|\mathbb{D}_{z'}[\widehat \varphi_\varepsilon]\|_{L^q(\omega\times Z_f)^{3\times 2}} \leq \varepsilon^\ell  \|\mathbb{D}_{x'}[\widetilde \varphi_\varepsilon]\|_{L^q(\widetilde\Omega_\varepsilon)^{3\times 2}},\quad \|\partial_{z_3}[\widehat \varphi_\varepsilon]\|_{L^q(\omega\times Z_f)^{3 }} \leq \|\partial_{z_3}[\widetilde \varphi_\varepsilon]\|_{L^q(\widetilde\Omega_\varepsilon)^{3}}$$
\item  For every $\zeta_\varepsilon\in W^{1,q'}(\omega_\varepsilon)^3$, $1\leq q'<+\infty$, the function $\widehat \zeta_\varepsilon$ given by (\ref{phihat})$_2$ belongs to $L^q(\omega;W^{1,q}(Z'_f)^3)$, and
$$ \|\nabla_{z'}\widehat \zeta_\varepsilon\|_{L^{q'}(\omega\times Z'_f)^{2}} \leq \varepsilon^\ell  \|\nabla_{x'}\zeta_\varepsilon\|_{L^{q'}(\omega_\varepsilon)^{2}}.$$
\end{itemize} 
\end{lemma}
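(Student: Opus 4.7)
The plan is to reduce all three estimates to an elementary change of variables on each unfolded cell, using Remark~\ref{remarkCV}. Fix $\widetilde\varphi_\varepsilon\in L^q(\widetilde\Omega_\varepsilon)^3$ and observe that by definition~\eqref{phihat}$_1$, $\widehat\varphi_\varepsilon(x',z)$ depends on $x'$ only through $\kappa(x'/\varepsilon^\ell)$, so on each cell $Z'_{k',\varepsilon^\ell}\times Z_f$ it is constant in $x'$. The first step is then to split the integral over $\omega\times Z_f$ as a sum over $k'\in \mathcal{K}_\varepsilon$ of integrals over $(\omega\cap Z'_{k',\varepsilon^\ell})\times Z_f$, pull the $z$-integrand out of the $x'$-integration, and bound $|\omega\cap Z'_{k',\varepsilon^\ell}|\leq |Z'_{k',\varepsilon^\ell}|=\varepsilon^{2\ell}$.

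The second step is to apply, on each cell, the change of variables $y'=\varepsilon^\ell k'+\varepsilon^\ell z'$ introduced in~\eqref{CVunfolding}, whose Jacobian is $\varepsilon^{2\ell}$. This absorbs exactly the $\varepsilon^{2\ell}$ prefactor from step one and converts $\int_{Z_f}|\widetilde\varphi_\varepsilon(\varepsilon^\ell k'+\varepsilon^\ell z',z_3)|^q\,dz$ into $\varepsilon^{-2\ell}\int_{Z_{f_{k'},\varepsilon^\ell}}|\widetilde\varphi_\varepsilon(y',z_3)|^q\,dy'dz_3$. Summing over $k'\in\mathcal K_\varepsilon$ and using that $\widetilde\varphi_\varepsilon$ has been extended by zero outside $\widetilde\Omega_\varepsilon$ (so that the union of the $Z_{f_{k'},\varepsilon^\ell}$, which may cover a slight superset of $\widetilde\Omega_\varepsilon$, only adds a null contribution), one obtains $\|\widehat\varphi_\varepsilon\|_{L^q(\omega\times Z_f)^3}^q\leq \|\widetilde\varphi_\varepsilon\|_{L^q(\widetilde\Omega_\varepsilon)^3}^q$. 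The estimates for $\widehat\zeta_\varepsilon$ and $\widehat\psi_\varepsilon$ are obtained in exactly the same fashion, with the scalar $\zeta_\varepsilon$ requiring only the two-dimensional change of variables over $Z'_f$.

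For the gradient bounds, I would differentiate~\eqref{phihat}$_1$ directly. The chain rule gives, for $j=1,2$,
\[
\partial_{z_j}\widehat\varphi_\varepsilon(x',z)=\varepsilon^\ell\,(\partial_{x_j}\widetilde\varphi_\varepsilon)\bigl(\varepsilon^\ell\kappa(x'/\varepsilon^\ell)+\varepsilon^\ell z',z_3\bigr),
\]
while $\partial_{z_3}\widehat\varphi_\varepsilon(x',z)=(\partial_{z_3}\widetilde\varphi_\varepsilon)(\cdots)$. Inserting these expressions into the computation above yields the prescribed $\varepsilon^\ell$ factor for the horizontal derivatives and no scaling for the vertical derivative. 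The inequalities for $\mathbb{D}_{z'}[\widehat\varphi_\varepsilon]$ and $\partial_{z_3}[\widehat\varphi_\varepsilon]$ follow from the same identities, since these are merely linear combinations (with constant coefficients) of the scaled partial derivatives, as encoded in definitions~\eqref{def_der_sym_1}. The argument for $\nabla_{z'}\widehat\zeta_\varepsilon$ is identical, simply omitting the $z_3$-integration.

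The main technical point is not the change of variables itself but the bookkeeping at the boundary of $\omega$: the cells $Z'_{k',\varepsilon^\ell}$ for $k'\in\mathcal K_\varepsilon$ may partially overflow $\omega$, which accounts for the inequality (rather than equality) on the left, while on the right the extension of $\widetilde\varphi_\varepsilon, \zeta_\varepsilon,\widetilde\psi_\varepsilon$ by zero outside $\omega$ (and hence outside $\widetilde\Omega_\varepsilon$ since we work within the strip $\omega\times(0,1)$) is what ensures that the sum of the cellwise integrals is controlled by a single norm on $\widetilde\Omega_\varepsilon$. Once this is handled cleanly, no further analytical input is required and the proof reduces to the scaling identities for the Jacobian and the chain rule.
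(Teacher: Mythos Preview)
Your argument is correct and is precisely the standard unfolding computation: cellwise splitting, piecewise constancy in $x'$, the bound $|\omega\cap Z'_{k',\varepsilon^\ell}|\le \varepsilon^{2\ell}$, the change of variables~\eqref{CVunfolding} with Jacobian $\varepsilon^{2\ell}$, and the chain rule for the derivative estimates. The paper itself does not spell out a proof but simply refers to \cite[Lemma~4.9]{Anguiano_SG}, whose argument is exactly the one you describe; so your proposal matches the intended proof.
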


\begin{definition}[Unfolded velocity and pressure] Let us define the unfolded velocity and pressures $(\widehat u_\varepsilon, \widehat p_\varepsilon^0, \widehat p_\ep^1)$ from $(\widetilde u_\varepsilon, p_\varepsilon^0, \widetilde p_\ep^1)$ depending on the value of $r$:
\begin{itemize}
\item If $1<r< 2$,  from $(\widetilde { u}_\varepsilon,   p_\varepsilon^0, \widetilde p_\ep^1)\in H^1_0(\widetilde \Omega_\varepsilon)^3\times H^1(\omega_\varepsilon)\times L^2(\widetilde \Omega_\varepsilon)$, we define $(\widehat { u}_\varepsilon, \widehat p_\varepsilon^0, \widehat p_\varepsilon^1)$ by (\ref{phihat})$_1$ for velocity, (\ref{phihat})$_2$  for pressure zero, (\ref{phihat})$_3$  for pressure one,  and $q=2$.\\

\item If $r>2$, from $(\widetilde { u}_\varepsilon,   p_\varepsilon^0, \widetilde p_\ep^1)\in W^{1,r}_0(\widetilde \Omega_\varepsilon)^3\times W^{1,r'}(\omega_\varepsilon)\times L^{r'}_0(\widetilde \Omega_\varepsilon)$, we define $(\widehat { u}_\varepsilon, \widehat p_\varepsilon^0, \widehat p_\ep^1)$  by (\ref{phihat})$_1$ for velocity, (\ref{phihat})$_2$ for pressure zero,  (\ref{phihat})$_2$ for pressure one, and $q=r$.
\end{itemize}
\end{definition}
Now, by combining estimates of the velocity  and pressures   with  Lemma~\ref{estimates_relation}, we deduce the following estimates of $(\widehat u_\varepsilon,\widehat p_\ep^0,\widehat p_\ep^1)$:

\begin{lemma}\label{estimates_hat} The unfolded $\widehat { u}_\varepsilon$ satisfies the following estimates depending on the value of $r$:

\begin{itemize}
\item  If $1<r<+\infty$, $r\neq 2$, then
\begin{equation}\label{estim_sol_hat1}
\displaystyle
\|{\widehat u}_\varepsilon\|_{L^2(\omega\times Z_f)^3}\leq C \ep^{2-\gamma}, \quad\displaystyle
\|D_{z'} {   \widehat u}_\varepsilon\|_{L^2(\omega\times Z_f)^{3\times 2}}\leq C\ep^{\ell+1-\gamma},\quad\displaystyle
\|\partial_{z_3} {\widehat  u}_\varepsilon\|_{L^2(\omega\times Z_f)^{3}}\leq C\ep^{2-\gamma}.
\end{equation}
 \item  If $r>2$, depending on the value of $\gamma$:
 \begin{itemize}
 \item If $\gamma<1$, then
 \begin{equation}\label{estim_sol_hat1_r_sub}
\displaystyle
\|{  \widehat u}_\varepsilon\|_{L^r(\omega\times Z_f)^3}\leq C\ep^{1-{2\over r}(\gamma-1)}, \quad\displaystyle
\|D_{z'} {  \widehat u}_\varepsilon\|_{L^r(\omega\times Z_f)^{3\times 2}}\leq \ep^{\ell-{2\over r}(\gamma-1)},\quad\displaystyle
\|\partial_{z_3} {\widehat  u}_\varepsilon\|_{L^2(\omega\times Z_f)^{3}} \leq \ep^{1-{2\over r}(\gamma-1)}.
\end{equation}
\item If $\gamma>1$ then
 \begin{equation}\label{estim_sol_hat1_r_super}
\displaystyle
\|{  \widehat u}_\varepsilon\|_{L^r(\omega\times Z_f)^3}\leq C\ep^{1-{\gamma-1\over r-1}}, \quad\displaystyle
\|D_{z'} {   \widehat u}_\varepsilon\|_{L^r(\omega\times Z_f)^{3\times 2}}\leq C\ep^{\ell-{\gamma-1\over r-1}},\quad\displaystyle
\|\partial_{z_3} {\widehat  u}_\varepsilon\|_{L^2(\omega\times Z_f)^{3}}\leq C\ep^{1-{\gamma-1\over r-1}}.
\end{equation}
\item If $\gamma=1$ then
 \begin{equation}\label{estim_sol_hat1_r}
\displaystyle
\|{  \widehat u}_\varepsilon\|_{L^r(\omega\times Z_f)^3}\leq C\varepsilon, \quad\displaystyle
\|D_{z'} {   \widehat u}_\varepsilon\|_{L^r(\omega\times Z_f)^{3\times 2}}\leq C\ep^\ell,\quad\displaystyle
\|\partial_{z_3} {\widehat  u}_\varepsilon\|_{L^2(\omega\times Z_f)^{3}}\leq C\ep.
\end{equation}
\end{itemize}

\end{itemize}
Moreover, considering $q=\max\{2,r\}$ and $q'$ the conjugate exponent of $q$, that is, such that $1/q+1/q'=1$,  it holds that
\begin{equation}\label{estim_P_hat}
 \|\widehat p_\varepsilon^0\|_{L^{q'}(\omega\times Z'_f)}\leq C,\quad \|\nabla_{z'}\widehat p_\varepsilon^0\|_{L^{q'}(\omega\times Z'_f)}\leq C\ep^\ell,\quad \|\widehat p_\varepsilon^1\|_{L^{q'}(\omega\times Z_f)}\leq C\ep.
 \end{equation}

 \end{lemma}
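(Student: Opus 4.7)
The proof will consist essentially in combining the scaled a priori estimates collected in Lemmas~\ref{lemma_estimates} and~\ref{lemma_est_P}, together with Corollary~\ref{Coro:extensionp0}, with the transfer inequalities relating unfolded and rescaled functions stated in Lemma~\ref{estimates_relation}. No new compactness argument or technical novelty is required; the work consists in careful bookkeeping of powers of $\varepsilon$ and $\varepsilon^\ell$.

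The first step will be to isolate, from the global bounds on $D_\varepsilon \widetilde u_\varepsilon$ given in Lemma~\ref{lemma_estimates}, the bounds on the tangential and vertical components $D_{x'}\widetilde u_\varepsilon$ and $\partial_{z_3}\widetilde u_\varepsilon$. Since by the definition of $D_\varepsilon$ in Section~\ref{sec:definition} we have $(D_\varepsilon \widetilde u_\varepsilon)_{ij}=\partial_{x_j}\widetilde u_i$ for $j=1,2$ and $(D_\varepsilon \widetilde u_\varepsilon)_{i3}=\varepsilon^{-1}\partial_{z_3}\widetilde u_i$, it is immediate that
\begin{equation*}
\|D_{x'}\widetilde u_\varepsilon\|_{L^q(\widetilde\Omega_\varepsilon)^{3\times 2}}\leq \|D_\varepsilon\widetilde u_\varepsilon\|_{L^q(\widetilde\Omega_\varepsilon)^{3\times 3}},\qquad \|\partial_{z_3}\widetilde u_\varepsilon\|_{L^q(\widetilde\Omega_\varepsilon)^3}\leq \varepsilon\,\|D_\varepsilon\widetilde u_\varepsilon\|_{L^q(\widetilde\Omega_\varepsilon)^{3\times 3}},
\end{equation*}
with $q=2$ when $1<r<2$ and $q=r$ when $r>2$.

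Next, I will apply Lemma~\ref{estimates_relation} to $\widetilde u_\varepsilon$. The first bullet of that lemma directly transfers the $L^q$ bound from $\widetilde u_\varepsilon$ to $\widehat u_\varepsilon$, yielding the first estimate of each display. The tangential gradient inequality $\|D_{z'}\widehat u_\varepsilon\|_{L^q}\leq \varepsilon^\ell\|D_{x'}\widetilde u_\varepsilon\|_{L^q}$ together with the bound on $D_{x'}\widetilde u_\varepsilon$ just obtained yields the second estimate, and explains the appearance of the $\varepsilon^\ell$ factor. The vertical-derivative inequality $\|\partial_{z_3}\widehat u_\varepsilon\|_{L^q}\leq \|\partial_{z_3}\widetilde u_\varepsilon\|_{L^q}$, combined with the extra factor $\varepsilon$ extracted above, yields the third estimate. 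Substituting the specific rates provided by Lemma~\ref{lemma_estimates} in each subcase ($1<r<2$; $r>2$ with $\gamma<1$, $\gamma>1$, or $\gamma=1$) reproduces exactly~\eqref{estim_sol_hat1}--\eqref{estim_sol_hat1_r}.

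For the pressure estimates~\eqref{estim_P_hat}, I will use Corollary~\ref{Coro:extensionp0}, which ensures that (after extension) $\|p_\varepsilon^0\|_{W^{1,q'}(\omega)}\leq C$, together with the bulleted parts of Lemma~\ref{estimates_relation}. The scalar $L^{q'}$ transfer gives the first bound $\|\widehat p_\varepsilon^0\|_{L^{q'}(\omega\times Z'_f)}\leq \|p_\varepsilon^0\|_{L^{q'}(\omega)}\leq C$, while the gradient inequality of Lemma~\ref{estimates_relation} (third bullet) produces $\|\nabla_{z'}\widehat p_\varepsilon^0\|_{L^{q'}}\leq \varepsilon^\ell\|\nabla_{x'} p_\varepsilon^0\|_{L^{q'}(\omega)}\leq C\varepsilon^\ell$. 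For $\widehat p_\varepsilon^1$, the $L^{q'}$ transfer directly transports the bound~\eqref{esti_P} on $\widetilde p_\varepsilon^1$, giving $\|\widehat p_\varepsilon^1\|_{L^{q'}(\omega\times Z_f)}\leq C\varepsilon$. The only minor point to keep track of is the compatibility of the extensions by zero outside $\omega$ and $\widetilde\Omega_\varepsilon$ with the definitions~\eqref{phihat}, which is handled exactly as in~\cite[Lemma~4.9]{Anguiano_SG}. Overall, no single step is an obstacle; the main care lies in matching the exponents of $\varepsilon$ and $\varepsilon^\ell$ appearing in each regime of $(r,\gamma)$.
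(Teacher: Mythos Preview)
Your proposal is correct and follows exactly the approach indicated in the paper, which states (without a separate proof environment) that the estimates follow by combining the velocity and pressure estimates of Lemmas~\ref{lemma_estimates} and~\ref{lemma_est_P} with the transfer inequalities of Lemma~\ref{estimates_relation}. Your write-up is in fact more detailed than the paper's one-line justification; the only minor remark is that invoking Corollary~\ref{Coro:extensionp0} is not strictly needed for~\eqref{estim_P_hat}, since Lemma~\ref{estimates_relation} already bounds the unfolded norms by the $W^{1,q'}(\omega_\varepsilon)$-norm of $p_\varepsilon^0$, which is controlled directly by~\eqref{estim_P_original2}.
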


 \subsection{Compactness results on velocities and pressures}\label{sec:convergences}
In this subsection, we analyze the asymptotic behaviour of   $(\widetilde { u}_\ep, p^0_\ep, \widetilde p^1_\ep)$  and the corresponding unfolded functions $(\widehat { u}_\ep, \widehat p_\ep^0, \widehat p_\ep^1)$, when $\ep$ tends to zero.

\begin{lemma}[Convergences of velocities] \label{lem_conv_vel}The extension of the velocity $\widetilde { u}_\ep$ and the unfolded velocity $\widehat { u}_\ep$ satisfy the following convergence results depending on the value of $r$.
\begin{itemize}
\item In the case $1< r< 2$ for every value of $\gamma$, and in the case $r>2$ for $\gamma<1$ , there exist $\widetilde { u}\in V_{z_3}^2(\Omega)$ where $\widetilde { u}=0$ on $\Gamma_0\cup \Gamma_1$ and $\widetilde u_3\equiv 0$, and $\widehat { u}\in  V_{z_3,\#}^2(\omega\times Z_f)^3$, where $\widehat { u}=0$ on $\omega\times (\widehat \Gamma_0\cup \widehat  \Gamma_1)$, $\widehat { u}=0$ on $\omega\times T$ and  $\widehat u_3\equiv 0$, such that, up to a subsequence,
%\begin{itemize}
%\item  $\widetilde { u}\in V_{z_3}^2(\Omega)^3$ where $\widetilde { u}=0$ on $\Gamma_0\cup \Gamma_1$ and $\widetilde u_3\equiv 0$, such that, up to a subsequence, 
\begin{equation}\label{conv_vel_tilde2}
\ep^{\gamma-2}\widetilde { u}_\ep\rightharpoonup  \widetilde { u} \quad\hbox{in }V_{z_3}^2(\Omega)^3,
\end{equation}
%\item  $\widehat { u}\in  V_{z_3,\#}^2(\omega\times Z)^3$, where $\widehat { u}=0$ on $\omega\times (\widehat \Gamma_0\cup \widehat  \Gamma_1)$, $\widehat { u}=0$ on $\omega\times T$ and $\widehat u_3\equiv 0$, such that, up to a subsequence, 
\begin{equation}\label{conv_vel_hat2}
\ep^{\gamma-2}\widehat { u}_\ep\rightharpoonup \widehat { u}\quad\hbox{in }V_{z_3}^2(\omega\times Z_f)^3.
\end{equation}

%\end{itemize}

\item If $r> 2$, then there exist $\widetilde { u}\in V_{z_3}^r(\Omega)$ where $\widetilde { u}=0$ on $\Gamma_0\cup \Gamma_1$ and $\widetilde u_3\equiv 0$, and $\widehat { u}\in  V_{z_3,\#}^r(\omega\times Z_f)^3$, where $\widehat { u}=0$ on $\omega\times (\widehat \Gamma_0\cup \widehat  \Gamma_1)$ and $\widehat u_3\equiv 0$, such that depending on the value of $\gamma$ and up to a subsequence,
\begin{itemize}
\item if $\gamma> 1$,
\begin{equation}\label{conv_vel_tilde_r_super}
\ep^{{\gamma-r\over r-1}}\widetilde { u}_\ep\rightharpoonup \widetilde { u} \quad\hbox{in }V_{z_3}^r(\Omega)^3,
\end{equation}
\begin{equation}\label{conv_vel_hat_r_super}
\quad 
 \ep^{{\gamma-r\over r-1}}\widehat { u}_\ep\rightharpoonup  \widehat { u}\quad\hbox{in }V_{z_3}^r(\omega\times Z_f)^3,
\end{equation}
 \item if $\gamma=1$,
\begin{equation}\label{conv_vel_tilde_r_eq}
\ep^{-1}\widetilde { u}_\ep\rightharpoonup \widetilde { u} \quad\hbox{in }V_{z_3}^r(\Omega)^3,
\end{equation}
\begin{equation}\label{conv_vel_hat_r_eq}
\ep^{-1}\widehat { u}_\ep\rightharpoonup  \widehat { u}\quad\hbox{in }V_{z_3}^r(\omega\times Z_f)^3.
\end{equation}
\end{itemize}
\end{itemize}
Moreover, in every case, the following incompressibility conditions hold: 
\begin{equation}\label{div_conv}
{\rm div}_{x'}\left(\int_0^{1}\widetilde { u}'(x',z_3)\,dz_3\right)=0\quad \hbox{in }\omega,\quad\left(\int_0^{1}\widetilde { u}'(x',z_3)\,dz_3\right)\cdot n=0\quad \hbox{on }\partial\omega,
\end{equation}
\begin{equation}\label{div_conv_z}
{\rm div}_{z'}(\widehat { u}')=0\quad\hbox{in }\omega\times Z_f.
\end{equation}
The limits $\widetilde u$ and $\widehat u$ are related by
\begin{equation}\label{relation}
\widetilde { u}(x',z_3)=\int_{Z'_f}\widehat { u}(x',z)\,dz',\quad \textrm{a.e. }(x',z_3)\in \Omega,
\end{equation}
and as a result, 
\begin{equation}\label{relation2}
\int_0^{1}\widetilde { u}(x',z_3)\,dz_3=\int_{Z_f}\widehat { u}(x',z)\,dz,\quad \textrm{a.e. }x'\in \omega.
\end{equation}
\end{lemma}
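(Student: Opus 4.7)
The plan is to derive all the convergence statements from the $\eps$-uniform a priori estimates of Lemmas~\ref{lemma_estimates} and \ref{estimates_hat}, together with the divergence-free condition and trace theorems in $V^q_{z_3}$-type spaces. In each of the four cases, the scaling exponent in front of $\widetilde u_\ep$ (respectively $\widehat u_\ep$) is chosen precisely so that both the $L^q$ norm and the $\partial_{z_3}$ norm of the rescaled velocity are uniformly bounded. Thus $\ep^\alpha\widetilde u_\ep$ is bounded in $V^q_{z_3}(\Omega)^3$ and $\ep^\alpha\widehat u_\ep$ in $V^q_{z_3,\#}(\omega\times Z_f)^3$, and weak compactness produces limits $\widetilde u,\widehat u$ along subsequences. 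The periodicity in $z'$ of $\widehat u$ is inherited from the cell-by-cell construction in \eqref{phihat}.

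Next I would identify the boundary-type conditions. Since $\widetilde u_\ep=0$ on $\Gamma_0\cup\Gamma_1$, trace continuity in $V^q_{z_3}$ transfers this vanishing to $\widetilde u$; the same argument on the unfolded function gives $\widehat u=0$ on $\omega\times(\widehat\Gamma_0\cup\widehat\Gamma_1)$. Moreover the zero extension of $\widetilde u_\ep$ inside the solid obstacles transfers via unfolding to $\widehat u_\ep=0$ on $\omega\times T$, a property preserved by weak convergence.

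The central point is the analysis of the divergence-free condition and the third component. In the $(x',z_3)$ variables $\mathrm{div}_{x'}\widetilde u'_\ep+\ep^{-1}\partial_{z_3}\widetilde u_{3,\ep}=0$; integrating in $z_3\in(0,1)$ and using the vanishing of $\widetilde u_{3,\ep}$ on $\Gamma_0\cup\Gamma_1$ gives $\mathrm{div}_{x'}\int_0^1\widetilde u'_\ep\,dz_3=0$, which after rescaling yields \eqref{div_conv}. For the third component I would rewrite the relation as $\partial_{z_3}\widetilde u_{3,\ep}=-\ep\,\mathrm{div}_{x'}\widetilde u'_\ep$ and test against a smooth function: the right-hand side vanishes in the limit because the rescaling factor leaves $\ep\,\widetilde u'_\ep$ still tending to zero, so $\partial_{z_3}\widetilde u_3=0$ and the trace $\widetilde u_3|_{z_3=0}=0$ forces $\widetilde u_3\equiv 0$. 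In the unfolded setting the incompressibility becomes $\mathrm{div}_{z'}\widehat u'_\ep+\ep^{\ell-1}\partial_{z_3}\widehat u_{3,\ep}=0$; multiplying by $\ep^{1-\ell}$ and passing to the limit gives $\partial_{z_3}\widehat u_3=0$, hence $\widehat u_3\equiv 0$, and the remaining equation then reads $\mathrm{div}_{z'}\widehat u'=0$, i.e., \eqref{div_conv_z}, after testing against $\varphi\in C^\infty_c(\omega\times Z_f)$.

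Finally, the link \eqref{relation} between $\widetilde u$ and $\widehat u$ is a classical by-product of the unfolding operator. The quantity $\int_{Z'_f}\widehat u_\ep(x',z)\,dz'$ is precisely the piecewise-constant, cellwise $x'$-horizontal average of $\widetilde u_\ep$ on each $Z'_{f_{k'},\ep^\ell}$, so it converges weakly to the same $L^q$-limit as $\widetilde u_\ep$ itself; identifying this limit with $\int_{Z'_f}\widehat u\,dz'$ yields \eqref{relation} and then \eqref{relation2} by integration in $z_3$. The principal technical obstacle I expect is the passage to the limit in the unfolded divergence-free relation, because the coefficient $\ep^{\ell-1}$ blows up when $0<\ell<1$; this is the feature that distinguishes the VTPM case from the PTPM case of~\cite{Anguiano_Bonn_SG_Carreau2} and requires the scaling of $\widehat u_{3,\ep}$ and $\mathrm{div}_{z'}\widehat u'_\ep$ to be matched consistently.
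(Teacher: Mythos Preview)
Your overall strategy matches the paper's: a priori bounds from Lemmas~\ref{lemma_estimates} and~\ref{estimates_hat} give weak compactness of the correctly normalized sequences in $V^q_{z_3}$-type spaces, trace continuity yields the vanishing on $\Gamma_0\cup\Gamma_1$ and on $\omega\times(\widehat\Gamma_0\cup\widehat\Gamma_1)$, and the divergence-free condition is passed to the limit via suitable test functions. The treatment of $\widetilde u_3\equiv 0$, of \eqref{div_conv}, and of the unfolding relation \eqref{relation} is essentially the same as in the paper.

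There is, however, a genuine gap in your derivation of \eqref{div_conv_z}. You argue: from $\mathrm{div}_{z'}\widehat u'_\ep+\ep^{\ell-1}\partial_{z_3}\widehat u_{3,\ep}=0$, first deduce $\partial_{z_3}\widehat u_3=0$ (correct), hence $\widehat u_3\equiv 0$, and then ``the remaining equation reads $\mathrm{div}_{z'}\widehat u'=0$''. This last step does not follow. Testing the rescaled identity against a general $\varphi\in C^\infty_c(\omega\times Z_f)$ and integrating by parts gives
\[
-\int_{\omega\times Z_f}\ep^{\gamma-2}\widehat u'_\ep\cdot\nabla_{z'}\varphi\,dx'dz
=\ep^{\ell-1}\int_{\omega\times Z_f}\ep^{\gamma-2}\widehat u_{3,\ep}\,\partial_{z_3}\varphi\,dx'dz .
\]
On the right, $\ep^{\gamma-2}\widehat u_{3,\ep}$ is only \emph{bounded} in $L^2$ and converges weakly to $0$; multiplied by the divergent factor $\ep^{\ell-1}$ you face an indeterminate $\infty\cdot 0$ and cannot conclude. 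You even flag this blow-up as ``the principal technical obstacle'' but do not resolve it.

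The paper's device is precisely to avoid this indeterminacy: it tests against $\widehat\varphi\in C^1_c(\omega\times Z'_f)$, i.e.\ a function \emph{independent of $z_3$}. Then $\int_0^1\partial_{z_3}\widehat u_{3,\ep}\,\widehat\varphi\,dz_3=0$ exactly at every $\ep$ (using $\widehat u_{3,\ep}=0$ on $z_3\in\{0,1\}$), so the coefficient $\ep^{\ell-1}$ is multiplied by zero rather than by something merely small. One is left with $\int_{\omega\times Z_f}\ep^{\gamma-2}\widehat u'_\ep\cdot\nabla_{z'}\widehat\varphi\,dx'dz=0$, in which the limit can be taken directly. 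Replace your test function by one that does not depend on $z_3$ and the argument closes.
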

 \begin{proof}

  Let us prove the convergence \eqref{conv_vel_tilde2} for $\widetilde u_\ep$. We begin with case $1<r<2$. In this case, from (\ref{estim_sol_dil1}), we obtain the following estimates for the extension of velocity $\widetilde u_\ep$:
\begin{equation}\label{estim_proof2}\|\widetilde{  u}_\ep\|_{L^2(\Omega)^3}\leq C\varepsilon^{2-\gamma},\quad \|\partial_{z_3}\widetilde{  u}_\ep\|_{L^2(\Omega)^3}\leq C\varepsilon^{2-\gamma}.
\end{equation}
Hence, there exists $\widetilde { u}\in V_{z_3}^2(\Omega)^3$ such that 
\begin{equation}\label{conv_vel_proof_1}
\ep^{\gamma-2}\widetilde { u}_\ep\rightharpoonup \widetilde{ u}\quad\hbox{in }V_{z_3}^2(\Omega)^3.
\end{equation}
Also, the continuity of the trace application from $V_{z_3}^Z(\Omega)$ to $L^2(\Gamma_0\cup \Gamma_1)$ implies $\widetilde{ u}=0$ on $\Gamma_0\cup \Gamma_1$. 

Next, taking $\widetilde \varphi\in C^1_c(\Omega)$, multiplying the divergence condition ${\rm div}_\ep(\widetilde{ u}_\ep)=0$ in $\Omega$ by $\ep^{\gamma-1}\widetilde \varphi$ and integrating by parts with respect to $x'$ yields
\[
\int_{\Omega}\partial_{z_3}(\ep^{\gamma-2}\widetilde u_{\eps,3})\widetilde{\varphi} = \int_{\Omega}\ep^{\gamma-1}\widetilde u_{\ep}'\cdot \nabla_{x'}\widetilde{\varphi}.
\]
From the first estimate in~\eqref{estim_proof2}, $\ep^{\gamma-1}\widetilde u_{\ep}'$ converges to zero in $L^2(\Omega)^2$ so we can pass to the limit in the previous equality and obtain using~\eqref{conv_vel_proof_1} the relation
\[
\int_{\Omega}\partial_{z_3}\widetilde u_3\, \widetilde{\varphi} = 0.
\] 
Therefore, $\widetilde u_3$ is independent of $z_3$. Because $\widetilde u_3=0$ on $\Gamma_0\cup \Gamma_1$, we obtain that $\widetilde u_3\equiv 0$, which concludes the proof of convergence~\eqref{conv_vel_tilde2}.

Now, take a test function $\widetilde \varphi(x') \in C^1(\overline \omega)$, which is independent of $z_3$. Multiplying the divergence condition ${\rm div}_\ep(\widetilde{ u}_\ep)=0$ by $\ep^{\gamma-2}\widetilde \varphi$ and integrating by parts, we obtain
\[
\int_{\Omega}\ep^{\gamma-2}\widetilde { u}'_\ep\cdot \nabla_{x'}\widetilde \varphi\,dx'dz_3=0.
\]
Passing to the limit and using Fubini's theorem, we deduce
\[
\int_{\omega}\left( \int_0^1\widetilde u'(x',z_3)\,dz_3\right)\cdot \nabla_{x'}\widetilde \varphi(x')\, dx' = 0,
\]
which proves~\eqref{div_conv} after integrating by parts in the $x'$ variable.
\\

In the case $r>2$ and $\gamma<1$, the velocity $\widetilde u_\ep$ satisfies two types of estimates: estimate (\ref{estim_sol_dil1}) in $L^2$ and (\ref{estim_sol_dil1_r_sub}) in $L^r$. Considering that $2-\gamma>-{2\over r}(\gamma-1)$, estimates (\ref{estim_sol_dil1}) are in fact the optimal ones, that is, estimates (\ref{estim_proof2}). Therefore, we proceed as we did in case $1<r<2$ to complete the proof of convergence~\eqref{conv_vel_tilde2} and the incompressibility condition (\ref{div_conv}).

Now, we prove (\ref{conv_vel_hat2}). In cases $1<r<2$, or  $r>2$ and $\gamma<1$, we recall that by estimate~\eqref{estim_sol_hat1},
\begin{equation}\label{estim_sol_hat1_proof}
	\displaystyle
	\|{  \widehat u}_\varepsilon\|_{L^2(\omega\times Z_f)^3}\leq C \ep^{2-\gamma}, \quad\displaystyle
	\|\partial_{z_3} {\widehat  u}_\varepsilon\|_{L^2(\omega\times Z_f)^{3}}\leq C\ep^{2-\gamma}.
\end{equation}
Since estimates~\eqref{estim_sol_hat1_proof} are similar to~\eqref{estim_proof2}, we deduce by an analog reasoning that the convergence~\eqref{conv_vel_hat2} holds, and also that 
$\widehat u_3\equiv 0$,  $\widehat u'=0$ on $\omega\times T$, and $\widehat u'=0$ on $\omega\times (\widehat \Gamma_0\cup\widehat \Gamma_1)$.  
\\

Let us prove the divergence condition (\ref{div_conv_z}) for case $1<r<2$ and case $r>2$ with $\gamma<1$. Applying the change of variables (\ref{CVunfolding}) to ${\rm div}_{\ep}(\widetilde u_\ep)=0$, we have
$$\ep^{-\ell}{\rm div}_{z'}(\widehat u_\ep')+\ep^{-1}\partial_{z_3}\widehat u_{\ep,3}=0\quad\hbox{in }\omega\times Z_f.$$
Multiplying the previous equality by the test function $\ep^{\ell+\gamma-2}\widehat \varphi \in C^1_c(\omega\times Z'_f)$ and integrating by parts, we obtain
$$\int_{\omega\times Z_f}\ep^{\gamma-2}\widehat { u}'_\ep\cdot \nabla_{z'}\widehat \varphi\,dx'dz=0.$$
Passing to the limit as $\ep$ tends to zero, we deduce (\ref{div_conv_z}).

The proof of the periodicity of $\widehat u$ is similar to that given in \cite{SG_nonlinear}; therefore, we omit it.\\

Finally, we consider the case $r>2$ and $\gamma\geq 1$. In this case, we recall the following estimates for the extension of the velocity $\widetilde u_\ep$:
$$
\displaystyle
\|{  \widetilde u}_\varepsilon\|_{L^r(  \Omega)^3}\leq C\ep^{1-{\gamma-1\over r-1}}, \quad\displaystyle
\|\partial_{z_3} {   \widetilde u}_\varepsilon\|_{L^r(\Omega)^{3\times 3}}\leq C\ep^{1-{\gamma-1\over r-1}},
$$
and the following ones for $\widehat u_\ep$:
$$
\displaystyle
\|{  \widehat u}_\varepsilon\|_{L^r(\omega\times Z_f)^3}\leq C\ep^{1-{\gamma-1\over r-1}}, \quad\displaystyle
\|\partial_{z_3} {\widehat  u}_\varepsilon\|_{L^2(\omega\times Z_f)^{3}}\leq C\ep^{1-{\gamma-1\over r-1}}.
$$

 \end{proof}

\begin{lemma}[Convergences of pressures]\label{lem_conv_press} Consider $q=\max\{2,r\}$ and $q'$ the conjugate exponent of $q$, that is, such that $1/q+1/q'=1$. Then, there exist three functions $\widetilde p\in L^{q'}_0(\omega)\cap W^{1,q'}(\omega)$, independent of $z$, $\widehat p_0\in L^{q'}(\omega;W^{1,q'}_{\#}(Z'_f))$ and $\widehat p_1\in L^{q'}(\omega;L^{q'}_{\#}(Z_f))$, such that
\begin{equation}\label{pressuresq}
  \widehat p_\ep^0\to \widetilde p\quad\hbox{in }L^{q'}(\omega;W^{1,q'}(Z'_f)),\quad \ep^{-\ell}\nabla_{z'}\widehat p_\ep^0\rightharpoonup \nabla_{x'}\widetilde p+\nabla_{z'}\widehat p_0\quad\hbox{in }L^{q'}(\omega;L^{q'}( Z'_f)),
\end{equation}
\begin{equation}\label{pressures_hatq}
\ep^{-1}\widehat p_\ep^1\rightharpoonup \widehat p_1\quad\hbox{in }L^{q'}(\omega;L^{q'}(Z_f)).
\end{equation}
\end{lemma}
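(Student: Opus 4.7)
The plan is to extract limits for three objects---the extended pressure $E_\eps p_\eps^0$ on $\omega$, the unfolded pressure $\widehat p_\eps^0$, and the unfolded pressure $\widehat p_\eps^1$---using the a priori bounds from Lemma~\ref{estimates_hat} and Corollary~\ref{Coro:extensionp0}, and then to identify the two-scale structure of the limits by combining properties of the unfolding operator with a classical two-scale argument.

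First, by Corollary~\ref{Coro:extensionp0}, the sequence $E_\eps(p_\eps^0)$ is bounded in $W^{1,q'}(\omega)$. Rellich--Kondrachov compactness yields, up to a subsequence, $E_\eps(p_\eps^0)\rightharpoonup \widetilde p$ weakly in $W^{1,q'}(\omega)$ and strongly in $L^{q'}(\omega)$, with $\widetilde p$ independent of $z$. To enforce $\widetilde p\in L^{q'}_0(\omega)$, I would exploit $\int_{\Omega_\eps}p_\eps\,dx=0$ together with the smallness $\|\widetilde p_\eps^1\|_{L^{q'}(\widetilde\Omega_\eps)}\leq C\eps$ from Lemma~\ref{lemma_est_P}, adjusting by a suitable constant (which does not affect any gradient). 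Applying the classical property of the unfolding that $\widehat g_\eps \to g$ strongly in $L^{q'}(\omega\times Z'_f)$ whenever $g_\eps\to g$ strongly in $L^{q'}(\omega)$, with $g_\eps = E_\eps(p_\eps^0)$, yields $\widehat p_\eps^0\to \widetilde p$ strongly in $L^{q'}(\omega\times Z'_f)$. Combined with $\|\nabla_{z'}\widehat p_\eps^0\|_{L^{q'}(\omega\times Z'_f)}\leq C\eps^\ell\to 0$ from Lemma~\ref{estimates_hat}, this delivers the strong convergence~\eqref{pressuresq}$_1$ in $L^{q'}(\omega; W^{1,q'}(Z'_f))$.

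For the scaled gradient, the bound $\|\eps^{-\ell}\nabla_{z'}\widehat p_\eps^0\|_{L^{q'}(\omega\times Z'_f)}\leq C$ furnishes, up to a subsequence, a weak limit $\xi\in L^{q'}(\omega\times Z'_f)^2$. To identify $\xi = \nabla_{x'}\widetilde p + \nabla_{z'}\widehat p_0$ with $\widehat p_0\in L^{q'}(\omega;W^{1,q'}_\#(Z'_f))$, I would use the identity
\[
\eps^{-\ell}(\nabla_{z'}\widehat p_\eps^0)(x',z') = (\nabla_{x'}p_\eps^0)\bigl(\eps^\ell\kappa(x'/\eps^\ell)+\eps^\ell z'\bigr),
\]
which follows from~\eqref{phihat}, and pass to the limit by testing against smooth, $Z'$-periodic (in $z'$), compactly supported (in $\omega$) vector fields. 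The $x'$-average of $\xi$ coincides with the weak limit of $\nabla_{x'}E_\eps(p_\eps^0)$ in $L^{q'}(\omega)^2$, namely $\nabla_{x'}\widetilde p$, and the oscillatory part is curl-free in $z'$, hence a gradient $\nabla_{z'}\widehat p_0$. Finally, the bound $\|\widehat p_\eps^1\|_{L^{q'}(\omega\times Z_f)}\leq C\eps$ immediately gives a weak limit $\widehat p_1\in L^{q'}(\omega;L^{q'}_\#(Z_f))$ for $\eps^{-1}\widehat p_\eps^1$, the $Z'$-periodicity in $z'$ being inherited from the unfolding construction and preserved under weak limits.

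The main obstacle lies in the two-scale identification of $\xi$ above: it requires the correct choice of $Z'$-periodic, $z'$-divergence-free test fields adapted to the perforated cell $Z'_f$ with obstacle $T'$, and a careful passage to the limit in terms involving both macroscopic and microscopic gradients. A secondary technical point is the boundary layer formed by cells $Z'_{k',\eps^\ell}$ not fully contained in $\omega$, whose measure is $O(\eps^\ell)$ and therefore produces no obstruction to the limiting identities.
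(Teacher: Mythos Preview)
Your approach is essentially the paper's: both rest on the uniform $W^{1,q'}(\omega)$-bound for the extended pressure $E_\eps(p_\eps^0)$ (Corollary~\ref{Coro:extensionp0}) together with standard unfolding compactness to obtain~\eqref{pressuresq}, weak $L^{q'}$-compactness for $\eps^{-1}\widehat p_\eps^1$, and the mean-zero condition on $\widetilde p_\eps$ to force $\widetilde p\in L^{q'}_0(\omega)$. The only difference is that the paper packages the identification of the limit of $\eps^{-\ell}\nabla_{z'}\widehat p_\eps^0$ as a direct citation to \cite[Proposition~4.57]{Cioran-book}, whereas you sketch that two-scale argument by hand.

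One slip to fix: in your identification of $\xi$, it is the \emph{$z'$-average over $Z'_f$} of $\xi$ (a function of $x'$), not the $x'$-average, that equals the weak $L^{q'}(\omega)^2$-limit $\nabla_{x'}\widetilde p$ of $\nabla_{x'}E_\eps(p_\eps^0)$; the remainder $\xi-\nabla_{x'}\widetilde p$ then has zero $z'$-mean and is curl-free in $z'$, hence of the form $\nabla_{z'}\widehat p_0$ with $\widehat p_0\in L^{q'}(\omega;W^{1,q'}_\#(Z'_f))$.
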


\begin{proof}  From the estimates for $p_\ep^0$ and $\widehat p_\ep^0$, and the classical compactness result for the unfolding method for a bounded sequence in $W^{1,q'}$ (see for instance \cite[Proposition 4.57]{Cioran-book}), we obtain convergences~\eqref{pressuresq}. 

The third estimate of (\ref{estim_P_hat}) for $\widehat p_\ep^1$ implies the existence of $\widehat p_1\in L^{q'}_\#(\omega\times Z)$ such that up to a subsequence, the last convergence in (\ref{pressures_hatq}) holds.

 Because $\widetilde p_\ep$ has mean value zero in $\widetilde\Omega_\ep$, from the decomposition of the pressure and the unfolding change of variables, we have
$$0=\int_{\omega\times Z'_f}  \widehat p_\ep^0\,dx'dz'+\int_{\omega\times Z_f}\widehat p_\ep^1\,dx'dz.$$
Considering the convergence of $\widehat p_\ep^0$ to $\widetilde p$, that $\widehat p_\ep^1$ tends to zero,
and $\widetilde p$ does not depend on $z'$, we obtain
$$\int_{\omega}\widetilde p\,dx'=0,$$
and so, $\widetilde p$ has mean value zero in $\omega$.

\end{proof}

\section{Obtention of the limit systems}\label{Sec:obtentionLimitModels}
 
 To complete the proof of Theorems~\ref{expresiones_finales} and~\ref{expresiones_finalesrsup2}, we first use a monotonicity argument that allows us to derive a variational inequality satisfied by the unfolded functions $\widehat u_\eps, \widehat p_\eps$. Then, using the appropriate scaling for the test functions (depending on $r$ and $\gamma$), we derive the two-pressure problem satisfied by the limits $\widehat u, \widetilde p$. These results are contained in Theorems~\ref{thm_pseudoplastic} and~\ref{thm_dilatant}, from which we finally deduce Theorems~\ref{expresiones_finales} and~\ref{expresiones_finalesrsup2}.
 
\subsection{Variational inequality satisfied by the unfolded functions}\label{sec:limitmodel}

Using a monotonicity argument together with Minty's lemma (see, for instance, \cite{Tapiero2, EkelandTemam}), we first derive a variational inequality that will be useful in the proof of the main theorems.  
 
According to Lemma \ref{lem_conv_vel}, we choose a test function ${ v}(x',z)\in \mathcal{D}(\omega; C^\infty_{\#}(Z)^3)$ with $v_3\equiv 0$,     ${ v}'(x',z)=0$ in $\omega\times T$ and    ${ v}'(x',z)=0$ on $\omega\times (\widehat \Gamma_0\cup \widehat \Gamma_1)$, and ${\rm div}_{x'}(\int_Z { v}')\,dz=0$ in $\omega$, $(\int_{Z}{ v}'\,dz )\cdot n=0$ on $\partial\omega$ and ${\rm div}_{z'}({ v}')=0$ in $\omega\times Z$. Multiplying the rescaled Stokes system (\ref{system_1_dil}) by $\widetilde v_\eps(x',z_3)={ v}(x',x'/\varepsilon^\ell,z_3)$ (which belongs to $W^{1,r}_0(\widetilde \Omega_\eps)^3$, $1<r<+\infty$), integrating by parts,  we have
\begin{equation}\label{fomrarvaran}\begin{array}{l}
\displaystyle
\medskip
\varepsilon^\gamma (\eta_0-\eta_\infty)\int_{ \widetilde \Omega_\ep }(1+\lambda|\mathbb{D}_\varepsilon[\widetilde { u}_\varepsilon]|^2)^{{r\over 2}-1}\mathbb{D}_\varepsilon[\widetilde { u}_\varepsilon] :\left( \mathbb{D}_{x'}[ { v}]+\ep^{-\ell}\mathbb{D}_{z'}[{ v}] + \varepsilon^{-1}\partial_{z_3}[ { v}]\right)\,dx'dz_3\\
\medskip
\displaystyle
+\varepsilon^\gamma \eta_\infty\int_{ \widetilde \Omega_\ep}\mathbb{D}_\varepsilon[\widetilde { u}_\varepsilon] :\left( \mathbb{D}_{x'}[ { v}']+\ep^{-\ell}\mathbb{D}_{z'}[{ v}] + \varepsilon^{-1}\partial_{z_3}[ { v}]\right)\,dx'dz_3\\
\medskip
\displaystyle
-\int_{\widetilde\Omega_\ep} p_\varepsilon^0\,{\rm div}_{x'}(v')\,dx'dz_3-\ep^{-\ell}\int_{\widetilde\Omega_\ep}p_\ep^0 {\rm div}_{z'}(v')\,dx'dz_3
\\
\noame
\displaystyle-\int_{\widetilde \Omega_\ep}\widetilde p_\varepsilon^1 \left({\rm div}_{x'}({ v}')+\ep^{-\ell} {\rm div}_{z'}({ v}')+\ep^{-1}\partial_{z_3}v_3\right)\,dx'dz_3=\int_{\widetilde \Omega_\ep} { f}'\cdot { v}'\,dx'dz_3.
\end{array}
\end{equation}
By the change of variables given in Remark \ref{remarkCV}, and taking into account that ${\rm div}_{z'}(v')=0$, we obtain
\begin{equation}\label{form_var_hat}\begin{array}{l}
\displaystyle
\medskip
 \ep^\gamma(\eta_0-\eta_\infty)\int_{ \omega\times Z_f }(1+\lambda |\varepsilon^{-\ell} \mathbb{D}_{z'}[\widehat { u}_\varepsilon] +\ep^{-1}\partial_{z_3}[\widehat { u}_\ep]|^2)^{{r\over 2}-1}\left(\varepsilon^{-\ell} \mathbb{D}_{z'}[\widehat { u}_\varepsilon] +\ep^{-1}\partial_{z_3}[\widehat { u}_\ep]\right):( \ep^{-\ell}\mathbb{D}_{z'}[{ v}] + \ep^{-1}\partial_{z_3}[ { v}]])\,dx'dz\\
\medskip
\displaystyle
+ \ep^\gamma \eta_\infty\int_{ \omega\times Z_f}\left(\varepsilon^{-\ell} \mathbb{D}_{z'}[\widehat { u}_\varepsilon] +\ep^{-1}\partial_{z_3}[\widehat { u}_\ep]\right):( \ep^{-\ell}\mathbb{D}_{z'}[{ v}] + \ep^{-1}\partial_{z_3}[ { v}])\,dx'dz\\
\medskip
\displaystyle
-\int_{\omega\times Z_f} \widehat p_\varepsilon^0\,{\rm div}_{x'}(v')\,dx'dz -\int_{\omega\times Z_f}\widehat p_\varepsilon^1\,(\varepsilon^{-\ell}{\rm div}_{z'}({ v}')+\ep^{-1}\partial_{z_3}v_3)\,dx'dz=\int_{\omega\times Z_f} { f}'\cdot { v}'\,dx'dz+O_\varepsilon,
\end{array}
\end{equation}
where $O_\varepsilon$ is a generic real sequence depending on $\varepsilon$ satisfying $|O_\eps|\leq C\eps^\ell$, that can change from line to line, and devoted to tend to zero. 

\begin{remark}
We point out that in the above variational formulations,  the test function has the form $v(x',x'/\varepsilon^\ell,z_3)$ in the integrals in $\widetilde \Omega_\eps$, and after the change of variables,  it has the form $v(x',z)$  in the integrals in $\omega\times Z_f$. The remaining terms are in $O_\eps$, considering that for $v(x',z)$ in $L^r(\Omega)$ with $1<r<+\infty$, then $\widehat v_\eps \to v$ in $L^r(\omega\times Z_f)$, because  $\|\widehat v_\eps-v\|_{L^r(\omega\times Z_f)}\leq C\eps^\ell$ (see \cite[Proposition 4.4]{Cioran-book} for more details).	
\end{remark}

Now, let us define the functional $J_q$ by
$$
J_q({ v})={2(\eta_0-\eta_\infty)\over r\lambda}\int_{\omega\times Z_f}(1+\lambda|\varepsilon^{1-\ell} \mathbb{D}_{z'}[  { v}] + \partial_{z_3}[ { v}]|^2)^{r\over 2}dx'dz+{\eta_\infty\over 2}\int_{\omega\times Z_f}|\varepsilon^{1-\ell} \mathbb{D}_{z'}[  { v}] + \partial_{z_3}[ { v}]|^2dx'dz.
$$
Observe that for every $\ep>0$, $J_r$ is convex and Gateaux differentiable on $L^q(\omega;W^{1,q}_{\#}(Z_f)^3)$ with  $q=\max\{2,r\}$ (which is an adaptation of \cite[Proposition 2.1 and Section 3]{Baranger}), and $A_r=J'_r$ is given by
\begin{equation}\label{defAr}\begin{array}{l}\displaystyle
(A_r({ w}),{ v})\\
\noame
=\displaystyle (\eta_0-\eta_\infty)\int_{\omega\times Z_f}(1+\lambda|\varepsilon^{1-\ell} \mathbb{D}_{z'}[  { w}] + \partial_{z_3}[ { w}]|^2)^{{r\over 2}-1}(\varepsilon^{1-\ell} \mathbb{D}_{z'}[  { w}] + \partial_{z_3}[ { w}]):(\varepsilon^{1-\ell} \mathbb{D}_{z'}[  { v}] + \partial_{z_3}[ { v}])dx'dz\\
\noame
\displaystyle+\eta_\infty\int_{\omega\times Z_f}(\varepsilon^{1-\ell} \mathbb{D}_{z'}[  { w}] + \partial_{z_3}[ { w}]):(\varepsilon^{1-\ell} \mathbb{D}_{z'}[  { v}] + \partial_{z_3}[ { v}])dx'dz.
\end{array}
\end{equation}
Applying \cite[Proposition 1.1., p.158]{Lions2}, $A_r$ is monotone, that is,
\begin{equation}\label{monotonicity}
(A_r({ w})-A_r({ v}),{ w}-{ v})\ge0,\quad \forall { w},{ v}\in L^q(\omega;W^{1,q}_{\#}(Z_f)^3).
\end{equation}

\subsection{Obtention of the limit models in case $1<r<2$ (pseudoplastic fluids)}

To prove our main results (Theorems~\ref{expresiones_finales} and~\ref{expresiones_finalesrsup2}), we first identify the two-pressure problem satisfied by the limit $\widehat u, \widetilde p$ of the appropriately normalized sequence of rescaled functions $\widehat u_\eps, \widehat p_\eps$ (see Lemma~\ref{lem_conv_vel}).

\begin{theorem}[Case $1<r<2$]\label{thm_pseudoplastic} The pair $(\widehat u, \widetilde p)\in V_{z_3,\#}^2(\omega\times Z_f)^2\times (L^{2}_0(\omega)\cap H^1(\omega))$ given in Lemmas \ref{lem_conv_vel} and \ref{lem_conv_press} is the unique solution of a  reduced two-pressure problem depending on the value of $\gamma$:
\begin{itemize}
\item If $\gamma\neq 1$, then $(\widehat u, \widetilde p)$ satisfies the following reduced Stokes problem
\begin{equation}\label{limit_model_gamma_neq_1}
\left\{\begin{array}{rl}
\displaystyle
-{\eta\over 2}\partial_{z_3}^2\widehat u+\nabla_{z'}\widehat \pi(z')={ f}'(x')-\nabla_{x'}\widetilde p(x') &\hbox{in }\omega\times Z_f,\\
\noame
\displaystyle
{\rm div}_{z'}(\widehat { u}')=0&\hbox{in }\omega\times Z_f,\\
\noame
\displaystyle
{\rm div}_{x'}\left(\int_{Z_f}\widehat u'\,dz\right)=0&\hbox{in }\omega,\\
\noame
\displaystyle \left(\int_{Z_f}\widehat u'\,dz\right)\cdot n=0&\hbox{on }\partial\omega,\\
\noame
\displaystyle \widehat u'=0&\hbox{on } \omega\times (\widehat\Gamma_0\cup\widehat\Gamma_1),\\
\noame
\displaystyle \widehat u'=0&\hbox{on }  \omega\times T,\\
\noame
\displaystyle \widehat \pi\in L^{2}(\omega;L^{2}_{0,\#}(Z'_f)),
\end{array}\right.
\end{equation}
where $\eta$ equal to $\eta_0$ if $\gamma<1$ or $\eta_\infty$ if $\gamma>1$.
\item If $\gamma = 1$, then $(\widehat u, \widetilde p)$ satisfies the following reduced Carreau Stokes problem
\begin{equation}\label{limit_model_gamma_eq_1}
\left\{\begin{array}{rl}
\displaystyle
-{1\over 2}\partial_{z_3} \left((\eta_0-\eta_\infty)\left(1+{\lambda\over 2}|\partial_{z_3} \widehat { u}'|^2\right)^{{r\over 2}-1}+\eta_\infty\right)\partial_{z_3}\widehat { u}'  +\nabla_{z'}\widehat \pi(z')={ f}'(x')-\nabla_{x'}\widetilde p(x') &\hbox{in }\omega\times Z_f,\\
\noame
\displaystyle
{\rm div}_{z'}(\widehat { u}')=0&\hbox{in }\omega\times Z_f,\\
\noame
\displaystyle
{\rm div}_{x'}\left(\int_{Z_f}\widehat u'\,dz\right)=0&\hbox{in }\omega,\\
\noame
\displaystyle \left(\int_{Z_f}\widehat u'\,dz\right)\cdot n=0&\hbox{on }\partial\omega,\\
\noame
\displaystyle \widehat u'=0&\hbox{on } \omega\times (\widehat\Gamma_0\cup\widehat\Gamma_1),\\
\noame
\displaystyle \widehat u'=0&\hbox{on }  \omega\times T,\\
\displaystyle \widehat \pi\in L^{2}(\omega;L^{2}_{0,\#}(Z'_f)).
\end{array}\right.
\end{equation}
\end{itemize}
\end{theorem}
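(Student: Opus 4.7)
The plan is to pass to the limit $\varepsilon\to 0$ in the unfolded variational formulation (\ref{form_var_hat}), using test functions $v(x',z)\in\mathcal D(\omega;C^\infty_\#(Z)^3)$ that already satisfy all constraints appearing in (\ref{limit_model_gamma_neq_1})--(\ref{limit_model_gamma_eq_1}): $v_3\equiv 0$, $v'=0$ on $\omega\times(T\cup\widehat\Gamma_0\cup\widehat\Gamma_1)$, $\mathrm{div}_{z'}(v')=0$ in $\omega\times Z$, and $\mathrm{div}_{x'}(\int_Z v'\,dz)=0$ in $\omega$ with vanishing normal trace on $\partial\omega$. With this choice, the three terms in (\ref{form_var_hat}) involving $\widehat p_\varepsilon^1$ vanish outright (since $v_3\equiv 0$ and $\mathrm{div}_{z'}(v')=0$), while $-\int\widehat p_\varepsilon^0\,\mathrm{div}_{x'}(v')\,dx'dz$ converges by Lemma~\ref{lem_conv_press} to $-\int_\omega\widetilde p\,\mathrm{div}_{x'}(\int_{Z_f}v'\,dz)\,dx'=0$. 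For the viscous part, estimate (\ref{estim_sol_hat1}) gives $\|\varepsilon^{-\ell}\mathbb D_{z'}[\widehat u_\varepsilon]\|_{L^2}$ and $\|\varepsilon^{-1}\partial_{z_3}[\widehat u_\varepsilon]\|_{L^2}$ both of order $\varepsilon^{1-\gamma}$; after multiplication by $\varepsilon^\gamma$ and contraction with $\varepsilon^{-\ell}\mathbb D_{z'}[v]+\varepsilon^{-1}\partial_{z_3}[v]$, every contribution involving at least one $\varepsilon^{-\ell}\mathbb D_{z'}$ factor carries an extra $\varepsilon^{1-\ell}\to 0$ (using $\ell<1$), so only the pure $\partial_{z_3}$ contractions survive.

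\textbf{Passage to the limit in the Carreau coefficient.} The key object is $(1+\lambda|\varepsilon^{-\ell}\mathbb D_{z'}[\widehat u_\varepsilon]+\varepsilon^{-1}\partial_{z_3}[\widehat u_\varepsilon]|^2)^{r/2-1}$, whose limit depends on $\gamma$. When $\gamma<1$ its argument tends to $0$ and the coefficient converges to $1$, giving effective viscosity $(\eta_0-\eta_\infty)+\eta_\infty=\eta_0$; when $\gamma>1$ the argument diverges and, since $r/2-1<0$, the coefficient tends to $0$, leaving only $\eta_\infty$. Both subcases produce the linear problem (\ref{limit_model_gamma_neq_1}) and can be handled by direct weak convergence of $\varepsilon^{\gamma-2}\widehat u_\varepsilon$ via (\ref{conv_vel_hat2}). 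The main difficulty is the critical case $\gamma=1$, where the argument is $O(1)$ and the full Carreau nonlinearity must be retained. I would handle it with Minty's monotonicity trick built on the operator $A_r$ from (\ref{defAr}): first, testing the rescaled problem (\ref{system_1_dil}) against $\widetilde u_\varepsilon$ itself produces an energy identity whose right-hand side has an identifiable limit (force term against $\widehat u$ plus limit pressure terms that vanish by the divergence conditions), giving an upper bound of the form $\limsup_\varepsilon(A_r(\widehat u_\varepsilon),\widehat u_\varepsilon)\le(A_{r,\mathrm{lim}}(\widehat u),\widehat u)$; combining with the weak convergence of $\varepsilon^{\gamma-2}\widehat u_\varepsilon$ and the monotonicity inequality (\ref{monotonicity}) in the form $(A_r(\widehat u_\varepsilon)-A_r(w),\widehat u_\varepsilon-w)\ge 0$, then substituting $w=\widehat u\pm s\varphi$ and letting $s\to 0$, identifies the weak limit of $A_r(\widehat u_\varepsilon)$ with the limit operator evaluated at $\widehat u$ and produces the variational formulation associated with (\ref{limit_model_gamma_eq_1}).

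\textbf{Pressure reconstruction and uniqueness.} Once the limit variational identity is established on the restricted test space, the two pressures $\widetilde p$ and $\widehat\pi$ are recovered by applying De Rham's theorem twice. The cell-level constraint $\mathrm{div}_{z'}(\widehat u')=0$ from (\ref{div_conv_z}), obtained upon relaxing the $z'$-divergence-free assumption on $v$, yields a microscopic pressure $\widehat\pi\in L^2(\omega;L^2_{0,\#}(Z'_f))$ with zero cell average; the macroscopic condition (\ref{div_conv}) together with the vanishing normal trace on $\partial\omega$ produces a global pressure in $L^2_0(\omega)\cap H^1(\omega)$, which by consistency must coincide with the limit $\widetilde p$ identified in Lemma~\ref{lem_conv_press}. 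Uniqueness of $(\widehat u,\widetilde p)$ follows from the strict monotonicity of $A_r$ when $\gamma=1$ and from coercivity of the linear operator $-\frac{\eta}{2}\partial_{z_3}^2$ when $\gamma\ne 1$: subtracting two solutions and testing against their difference, combined with the boundary conditions on $\omega\times(\widehat\Gamma_0\cup\widehat\Gamma_1)$, forces $\widehat u_1=\widehat u_2$, after which equality of the pressures follows from the momentum balance. The expected main obstacle is the $\gamma=1$ case: ensuring that the Minty argument goes through requires carefully pairing the (weak) convergence of $\varepsilon^{-1}\partial_{z_3}[\widehat u_\varepsilon]$ in $L^2$ with the continuity of the Carreau nonlinearity and matching it with the energy bound derived from the original problem.
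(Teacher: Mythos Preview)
Your approach for $\gamma=1$ via Minty's trick is essentially what the paper does, and your treatment of $\gamma<1$ by direct weak convergence can be made rigorous (since $M_\varepsilon:=\varepsilon^{-\ell}\mathbb D_{z'}[\widehat u_\varepsilon]+\varepsilon^{-1}\partial_{z_3}[\widehat u_\varepsilon]\to 0$ strongly in $L^2$ by (\ref{estim_sol_hat1}), the bounded Carreau coefficient converges to $1$ a.e.\ and hence strongly in every $L^p$, and can then be paired with the weak limit of $\varepsilon^{\gamma-2}\partial_{z_3}[\widehat u_\varepsilon]$). The gap is in your argument for $\gamma>1$: you assert that ``the argument diverges,'' but the a~priori estimate (\ref{estim_sol_hat1}) only gives an \emph{upper} bound $\|M_\varepsilon\|_{L^2}\le C\varepsilon^{1-\gamma}$, which says nothing about pointwise behaviour. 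There is no lower bound forcing $|M_\varepsilon|\to\infty$ a.e., so you cannot conclude that $(1+\lambda|M_\varepsilon|^2)^{r/2-1}\to 0$. You are then left with a product of this bounded nonlinear coefficient (depending on $\widehat u_\varepsilon$) and the only weakly convergent sequence $\varepsilon^{\gamma-2}\partial_{z_3}[\widehat u_\varepsilon]$, with no compactness available to pass to the limit in the product.

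The paper resolves this by applying the monotonicity/Minty argument \emph{uniformly for all} $\gamma$, not just for $\gamma=1$. Concretely, one takes the test function $v_\varepsilon=\varepsilon^{1-\gamma}(\varphi-\varepsilon^{\gamma-2}\widehat u_\varepsilon)$ in (\ref{form_var_hat}) and uses the monotonicity (\ref{monotonicity}) of $A_r$ to replace $A_r(\varepsilon^{-1}\widehat u_\varepsilon)$ by $A_r(\varepsilon^{1-\gamma}\varphi)$ at the cost of an inequality. This produces (\ref{Proof_2}), in which the Carreau coefficient is now evaluated at the \emph{fixed} test function $\varphi$: it reads $(1+\lambda\,\varepsilon^{2(1-\gamma)}|\varepsilon^{1-\ell}\mathbb D_{z'}[\varphi']+\partial_{z_3}[\varphi']|^2)^{r/2-1}$. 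Since $\varphi$ is smooth and fixed, the prefactor $\varepsilon^{2(1-\gamma)}$ controls the asymptotics \emph{explicitly}: for $\gamma>1$ it diverges pointwise and the coefficient tends to $0$; for $\gamma<1$ it vanishes and the coefficient tends to $1$; for $\gamma=1$ it equals $1$ and the full Carreau nonlinearity survives. After the limit one obtains the variational inequality for every admissible $\varphi$ and concludes via Minty's lemma. This is the missing idea: the monotonicity trick is needed not only to handle the critical nonlinearity at $\gamma=1$, but also to transfer the $\varepsilon$-scaling from the unknown $\widehat u_\varepsilon$ (where it is uncontrolled from below) to the test function $\varphi$ (where it is explicit).
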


\begin{proof} The divergence equations in $\omega\times  Z_f$ and $\omega$ in problems (\ref{limit_model_gamma_neq_1}) and (\ref{limit_model_gamma_eq_1}) follow directly from Lemma \ref{lem_conv_vel}. Let us derive the limit model. Consider
$$v_\ep(x',z)=\ep^{1-\gamma}\Big(\varphi(x',z)-\ep^{\gamma-2}\widehat u_\ep(x',z)\Big),$$
as a test function in (\ref{form_var_hat}), with $\varphi\in \mathcal{D}(\omega; C^\infty_{\#}(Z_f)^3)$ with $\varphi_3\equiv 0$, $\varphi'=0$ on $\omega\times T$ and  $\varphi'=0$ on $\omega\times (\widehat \Gamma_0\cup \widehat \Gamma_1)$, satisfying the divergence conditions ${\rm div}_{x'}(\int_{Z_f} \varphi')\,dz=0$ in $\omega$, $(\int_{Z_f}\varphi'\,dz )\cdot n=0$ on $\partial\omega$, and ${\rm div}_{z'}(\varphi')=0$ in $\omega\times Z_f$. Then, considering the definition of $A_r$ given in (\ref{defAr}), we obtain:
\begin{equation*}\begin{array}{l}
\displaystyle
\medskip
  \ep^{\gamma-1}(A_r(\varepsilon^{-1}\widehat { u}_\varepsilon),{ v}_\varepsilon)-\int_{\omega\times Z_f} \widehat p_\varepsilon^0\,{\rm div}_{x'}(v'_\ep)\,dx'dz \\
  \noame\displaystyle 
   -\int_{\omega\times Z_f}\widehat p_\varepsilon^1\,(\varepsilon^{-\ell}{\rm div}_{z'}({ v}'_\ep)+\ep^{-1}\partial_{z_3}v_{\ep,3})\,dx'dz=\int_{\omega\times Z_f} { f}'\cdot { v}'_\varepsilon\,dx'dz+O_\varepsilon,
\end{array}
\end{equation*}
with $|O_\eps|\leq C\eps^{\ell+1-\gamma}$, 
which is equivalent to
\begin{equation*}\begin{array}{l}
\displaystyle
\medskip
 \ep^{\gamma-1}(A_r(\varepsilon^{1-\gamma}\varphi)-A_r(\varepsilon^{-1}\widehat { u}_\varepsilon),{ v}_\varepsilon)- \ep^{\gamma-1} (A_r(\varepsilon^{1-\gamma}\varphi),{ v}_\varepsilon)\\
 \noame
 \displaystyle
+\int_{\omega\times Z_f} \widehat p_\varepsilon^0\,{\rm div}_{x'}(v'_\ep)\,dx'dz
 \displaystyle +\int_{\omega\times Z_f}\widehat p_\varepsilon^1\,(\varepsilon^{-\ell}{\rm div}_{z'}({ v}'_\ep)+\ep^{-1}\partial_{z_3}v_{\ep,3})\,dx'dz
 =-\int_{\omega\times Z_f} { f}'\cdot { v}'_\varepsilon\,dx'dz+O_\varepsilon.
\end{array}
\end{equation*}
Due to (\ref{monotonicity}), we can deduce
\begin{equation*}\begin{array}{l}
\displaystyle
\medskip
 \ep^{\gamma-1} (A_r(\varepsilon^{1-\gamma}\varphi),{ v}_\varepsilon)+\int_{\omega\times Z_f} \widehat p_\varepsilon^0\,{\rm div}_{x'}(v'_\ep)\,dx'dz \\
 \noame
 \displaystyle -\int_{\omega\times Z_f}\varepsilon^{-\ell}\widehat p_\varepsilon^1\,(\varepsilon^{-\ell}{\rm div}_{z'}({ v}'_\ep)+\ep^{-1}\partial_{z_3}v_{\ep,3})\,dx'dz\ge\int_{\omega\times Z_f} { f}'\cdot { v}'_\varepsilon\,dx'dz+O_\varepsilon,
\end{array}
\end{equation*}
Taking into account that ${\rm div}_\ep(\widetilde u_\ep)=0$ implies $\ep^\ell{\rm div}_{z'}(\widehat u_\ep')+\ep^{-1}\partial_{z_3}\widehat u_{\ep,3}=0$,      $\varphi_3\equiv 0$ and  ${\rm div}_{z'}(\varphi')=0$ in $\omega\times Z_f$, we have that
$$\varepsilon^{-\ell}{\rm div}_{z'}(\varepsilon^{1-\gamma}\varphi'-\ep^{-1}\widehat u_\ep')+\ep^{-1}\partial_{z_3}(\varepsilon^{1-\gamma}\varphi_3-\ep^{-1}\widehat u_{\ep,3})=\varepsilon^{1-\ell-\gamma}{\rm div}_{z'}(\varphi')+\ep^{-\ell}{\rm div}_{z'}(\widehat u_\ep')+\ep^{-1}\partial_{z_3}\widehat u_{\ep,3}=0,$$
and taking into account (\ref{productAB}), we get
%\begin{equation}\label{v_ineq_carreau}\begin{array}{l}
%\displaystyle
%\medskip
%\ep^{\gamma-1} (\eta_0-\eta_\infty)\int_{\omega\times Z_f}(1+\lambda|\varepsilon^{1-\ell} \mathbb{D}_{z'}[  \varphi'] + \partial_{z_3}[ \varphi']|^2)^{{r\over 2}-1}(\varepsilon^{1-\ell} \mathbb{D}_{z'}[  \varphi'] + \partial_{z_3}[ \varphi']):(\varepsilon^{1-\ell} \mathbb{D}_{z'}[  { v}_\ep'] + \partial_{z_3}[ { v}_\ep'])dx'dz\\
%\noame
%\medskip
%\displaystyle+  \ep^{\gamma-1}\eta_\infty\int_{\omega\times Z_f}(\varepsilon^{1-\ell} \mathbb{D}_{z'}[  \varphi'] + \partial_{z_3}[ \varphi']):(\varepsilon^{1-\ell} \mathbb{D}_{z'}[  { v}_\ep'] + \partial_{z_3}[ { v}_\ep'])dx'dz\\
%\medskip
%\displaystyle+\int_{\omega\times Z_f} \widehat p_\varepsilon^0\,{\rm div}_{x'}(v'_\ep)\,dx'dz
% \displaystyle  \ge\int_{\omega\times Z_f} { f}'\cdot { v}'_\varepsilon\,dx'dz+O_\varepsilon,
%\end{array}
%\end{equation}
\begin{equation}\label{Proof_1}\begin{array}{l}
\displaystyle
\medskip
\ep^{1-\gamma}(\eta_0-\eta_\infty)\int_{\omega\times Z_f}(1+\lambda\ep^{2(1-\gamma)}|\varepsilon^{1-\ell} \mathbb{D}_{z'}[  \varphi'] + \partial_{z_3}[ \varphi']|^2)^{{r\over 2}-1}(\varepsilon^{1-\ell} \mathbb{D}_{z'}[  \varphi'] + \partial_{z_3}[ \varphi'])\\
\noame
 \qquad\qquad\qquad\qquad\displaystyle :(\varepsilon^{1-\ell} \mathbb{D}_{z'}[  \varphi'-\ep^{\gamma-2}\widehat u_\ep'] + \partial_{z_3}[ \varphi'-\ep^{\gamma-2}\widehat u_\ep'])dx'dz\\
\noame
\medskip
\displaystyle+  \ep^{1-\gamma}\eta_\infty\int_{\omega\times Z_f}(\varepsilon^{1-\ell} \mathbb{D}_{z'}[  \varphi'] + \partial_{z_3}[ \varphi']):(\varepsilon^{1-\ell} \mathbb{D}_{z'}[  \varphi'-\ep^{\gamma-2}\widehat u_\ep'] + \partial_{z_3}[  \varphi'-\ep^{\gamma-2}\widehat u_\ep'])dx'dz\\
\medskip
\displaystyle+\ep^{1-\gamma}\int_{\omega\times Z_f} \widehat p_\varepsilon^0\,{\rm div}_{x'}(\varphi'-\ep^{\gamma-2}\widehat u_\ep')\,dx'dz  \ge\ep^{1-\gamma}\int_{\omega\times Z_f} { f}'\cdot ( \varphi'-\ep^{\gamma-2}\widehat u_\ep')\,dx'dz+O_\varepsilon,
\end{array}
\end{equation}
where $|O_\ep|\leq C\ep^{\ell+1-\gamma}$. Dividing by $\ep^{1-\gamma}$, we get
\begin{equation}\label{Proof_2}\begin{array}{l}
\displaystyle
\medskip
 (\eta_0-\eta_\infty)\int_{\omega\times Z_f}(1+\lambda\ep^{2(1-\gamma)}|\varepsilon^{1-\ell} \mathbb{D}_{z'}[  \varphi'] + \partial_{z_3}[ \varphi']|^2)^{{r\over 2}-1}(\varepsilon^{1-\ell} \mathbb{D}_{z'}[  \varphi'] + \partial_{z_3}[ \varphi'])\\
\noame
 \qquad\qquad\qquad\qquad\displaystyle :(\varepsilon^{1-\ell} \mathbb{D}_{z'}[  \varphi'-\ep^{\gamma-2}\widehat u_\ep'] + \partial_{z_3}[ \varphi'-\ep^{\gamma-2}\widehat u_\ep'])dx'dz\\
\noame
\medskip
\displaystyle+  \eta_\infty\int_{\omega\times Z_f}(\varepsilon^{1-\ell} \mathbb{D}_{z'}[  \varphi'] + \partial_{z_3}[ \varphi']):(\varepsilon^{1-\ell} \mathbb{D}_{z'}[  \varphi'-\ep^{\gamma-2}\widehat u_\ep'] + \partial_{z_3}[  \varphi'-\ep^{\gamma-2}\widehat u_\ep'])dx'dz\\
\medskip
\displaystyle+\int_{\omega\times Z_f} \widehat p_\varepsilon^0\,{\rm div}_{x'}(\varphi'-\ep^{\gamma-2}\widehat u_\ep')\,dx'dz  \ge \int_{\omega\times Z_f} { f}'\cdot ( \varphi'-\ep^{\gamma-2}\widehat u_\ep')\,dx'dz+O_\varepsilon,
\end{array}
\end{equation}
where $|O_\ep|\leq C\ep^\ell$, which tends to zero when $\ep$ tends to  zero. Now, we can pass to the limit in every term:
\begin{itemize}
\item First and second terms in (\ref{Proof_2}). We divide this into three points, depending on the value of $\gamma$:
\begin{enumerate}
\item  If $\gamma<1$, then $2(1-\gamma)>0$ and so, 
$$\lambda\ep^{2(1-\gamma)}|\varepsilon^{1-\ell} \mathbb{D}_{z'}[  \varphi'] + \partial_{z_3}[ \varphi']|^2\to 0.$$
Thus, from $\ep^{1-\ell}\to0$ and convergence (\ref{conv_vel_hat2}), passing to the limit as $\ep\to 0$,  we find that the first and second terms in (\ref{Proof_2}) converge to 
$$\begin{array}{l}
\displaystyle
\medskip
 (\eta_0-\eta_\infty)\int_{\omega\times Z_f}\partial_{z_3}[ \varphi']:\partial_{z_3}[ \varphi'- \widehat u']\,dx'dz+\eta_\infty \int_{\omega\times Z_f}\partial_{z_3}[ \varphi']:\partial_{z_3}[ \varphi'- \widehat u']\,dx'dz\\
 \noame
 \displaystyle =\eta_0\int_{\omega\times Z_f}\partial_{z_3}[ \varphi']:\partial_{z_3}[ \varphi'- \widehat u']\,dx'dz.
 \end{array}$$
 \item If $\gamma>1$, then $2(1-\gamma)<0$ and so 
$$(1+\lambda\ep^{2(1-\gamma)}|\varepsilon^{1-\ell} \mathbb{D}_{z'}[  \varphi'] + \partial_{z_3}[ \varphi']|^2)^{{r\over 2}-1}\to 0.$$ Hence, the first and second terms in (\ref{Proof_2}) converge to 
$$\eta_\infty\int_{\omega\times Z_f}\partial_{z_3}[ \varphi']:\partial_{z_3}[ \varphi'- \widehat u']\,dx'dz.$$
\item If $\gamma=1$, then 
$$(1+\lambda|\varepsilon^{1-\ell} \mathbb{D}_{z'}[  \varphi'] + \partial_{z_3}[ \varphi']|^2)^{{r\over 2}-1}
\to (1+\lambda|\partial_{z_3}[ \varphi']|^2)^{{r\over 2}-1}.$$
Therefore, we identify the limit as 
$$ (\eta_0-\eta_\infty)\int_{\omega\times Z_f}(1+\lambda |\partial_{z_3}[ \varphi']|^2)^{{r\over 2}-1}\partial_{z_3}[ \varphi']:\partial_{z_3}[ \varphi'- \widehat u'])dx'dz+\eta_\infty\int_{\omega\times Z_f}\partial_{z_3}[ \varphi']:\partial_{z_3}[ \varphi'- \widehat u']\,dx'dz.$$
\end{enumerate}

\item Third term in (\ref{Proof_2}). By using the strong convergence (\ref{pressuresq})$_2$ of $\widehat p_\ep^0$ and the weak convergence (\ref{conv_vel_hat2}) of $\widehat u_\ep$, passing to the limit as $\ep\to 0$ and using divergence condition ${\rm div}_{x'}(\int_{Z_f} \varphi')\,dz={\rm div}_{x'}(\int_{Z_f} \widehat u')\,dz=0$ , the third term in (\ref{Proof_2}) converges to 
$$\begin{array}{l}\displaystyle
\int_{\omega\times Z_f} \widetilde p(x')\,{\rm div}_{x'}(\varphi'-\widehat u')\,dx'dz=\int_{\omega}\widetilde p(x'){\rm div}_{x'}\left(\int_{Z_f}(\varphi'-\widehat u')\,dz\right)dx'=0.
\end{array}$$
\item Last term in (\ref{Proof_2}). Using convergence  (\ref{conv_vel_hat2}), the last term converges to
$$\int_{\omega\times Z_f} { f}'\cdot ( \varphi'- \widehat u')\,dx'dz.$$
\end{itemize}
In summary, considering $\eta$ equal to $\eta_0$ if $\gamma<1$ or $\eta_\infty$ if $\gamma>1$, we obtain the following inequality:
$$ \eta\int_{\omega\times Z_f}\partial_{z_3}[ \varphi']:\partial_{z_3}[ \varphi'- \widehat u']\,dx'dz\geq\int_{\omega\times Z_f}f'\cdot (\varphi'- \widehat u')\,dx'dz,$$
and for $\gamma=1$:
$$\begin{array}{l}\displaystyle(\eta_0-\eta_\infty)\int_{\omega\times Z_f}(1+\lambda |\partial_{z_3}[ \varphi']|^2)^{{r\over 2}-1}\partial_{z_3}[ \varphi']:\partial_{z_3}[ \varphi'- \widehat u'])dx'dz+\eta_\infty\int_{\omega\times Z_f}\partial_{z_3}[ \varphi']:\partial_{z_3}[ \varphi'- \widehat u']\,dx'dz\\
\noame
\displaystyle\geq \int_{\omega\times Z_f}f'\cdot (\varphi'- \widehat u')\,dx'dz.
\end{array}$$
Because $\varphi'$ is arbitrary, using Minty's lemma (see \cite[Chapter 3, Lemma 1.2]{Lions2}), we deduce that the following equality holds true for $\gamma\neq 1$:
\begin{equation}\label{problem_limit1} \eta\int_{\omega\times Z_f}\partial_{z_3}[ \widehat u']:\partial_{z_3}[ v']\,dx'dz=\int_{\omega\times Z_f}f'\cdot v'\,dx'dz,
\end{equation}
and for  $\gamma=1$, the following one:
\begin{equation}\label{problem_limit2}\begin{array}{l}\displaystyle(\eta_0-\eta_\infty)\int_{\omega\times Z_f}(1+\lambda |\partial_{z_3}[ \widehat u']|^2)^{{r\over 2}-1}\partial_{z_3}[ \widehat u']:\partial_{z_3}[ v'])dx'dz+\eta_\infty\int_{\omega\times Z_f}\partial_{z_3}[ \widehat u']:\partial_{z_3}[ v']\,dx'dz\\
\noame
\displaystyle=\int_{\omega\times Z_f}f'\cdot v'\,dx'dz,
\end{array}
\end{equation}
which by density are satisfied by every $v'$ in the Hilbert space $\mathbb{V}_2$ defined by
\begin{equation}\label{Vspace2}
\mathbb{V}_2=\left\{\begin{array}{l}
v'(x',z)\in V_{z_3,\#}^2(\omega\times Z_f)^2\quad\hbox{such that } \\
\noame
\displaystyle
{\rm div}_{x'}\left(\int_{Z_f}v'(x',z)\,dz\right)=0\quad\hbox{in }\omega,\quad \left(\int_{Z_f}v'(x',z)\,dz\right)\cdot n=0\quad\hbox{on }\partial\omega,\\
\noame
\displaystyle
{\rm div}_{z'}(v')=0\quad\hbox{in }\omega\times Z_f,\quad v'(x',z)=0\quad\hbox{on }(\omega\times T)\hbox{ and }\omega \times (\widehat \Gamma_0\cup\widehat \Gamma_1)
\end{array}\right\}.
\end{equation}
Reasoning as in \cite{Anguiano_Bonn_SG_Carreau2}, and taking into account the relations
\begin{equation}\label{equatlitysimmetruic}\partial_{z_3}[\widehat {  u}']: \partial_{z_3} [{  v}']={1\over 2}\partial_{z_3}\widehat {  u}' \cdot \partial_{z_3} { v}'\quad \hbox{and}\quad |\partial_{z_3}[\widehat { u}']|^2={1\over 2}|\partial_{z_3}\widehat {  u}'|^2,
\end{equation} 
 we can deduce that the limit variational formulation (\ref{problem_limit1}) is equivalent to problem (\ref{limit_model_gamma_neq_1})  for $\gamma\neq 1$, where $\eta$ is equal to $\eta_0$ if $\gamma<1$ or $\eta_\infty$ if $\gamma>1$, and that the variational formulation  (\ref{problem_limit2}) is equivalent to problem (\ref{limit_model_gamma_eq_1}) for $\gamma=1$. 
 
According to \cite{BayadaChambat} and \cite[Propositions 3.2 and 3.3]{Tapiero2}, it can be proven that (\ref{problem_limit1}) and (\ref{problem_limit2}) have a unique solution  $(\widehat u', \widehat \pi, \widetilde p)\in V_{z_3,\#}^2(\omega\times Z)^2\times L^2(\omega;L^2_{0,\#}(Z'_f))\times (L^{2}_0(\omega)\cap H^1(\omega)$).  Hence the entire sequence  converges.

\end{proof}

We are now in a position to deduce Theorem~\ref{expresiones_finales} from Theorem~\ref{thm_pseudoplastic}.

\emph{Proof of Theorem~\ref{expresiones_finales}.} The proof is divided into four steps. In the first two steps, we consider the case $\gamma=1$ and in the last two steps, we consider the case $\gamma\neq 1$. \\

{\it Step 1. The local  problem for $\gamma=1$.}  Let $q=\max\{2,r\}$ and $q'$ the conjugate exponent of $q$. For every $\delta'\in\mathbb{R}^2$, we consider $(\widehat { w}_{\delta'}(z), \widehat q_{\delta'}(z'))$  satisfying the local problem given by
\begin{equation}\label{limit_model_local}
\left\{\begin{array}{rl}
\displaystyle
-{1\over 2}\partial_{z_3}\left(  \left((\eta_0-\eta_\infty)\left(1+{\lambda\over 2}|\partial_{z_3} \widehat { w}_{\delta'}|^2\right)^{{r\over 2}-1}+\eta_\infty\right)\partial_{z_3}\widehat { w}_{\delta'}\right)+\nabla_{z'}\widehat q_{\delta'}(z')=-\delta' &\hbox{in }  Z_f,\\
\noame
\displaystyle
{\rm div}_{z'}(\widehat { w}_{\delta'})=0&\hbox{in }  Z_f,\\
\noame
\displaystyle \widehat { w}_{\delta'}=0&\hbox{on }  T,\\
\noame
\displaystyle \widehat { w}_{\delta'}=0&\hbox{on } \widehat\Gamma_0\cup\widehat\Gamma_1,\\
\noame
\displaystyle \widehat q_{\delta'}\in L^{q'}_{0,\#}(Z'_f).
\end{array}\right.
\end{equation}
This problem has a unique solution $(\widehat { w}_{\delta'}(z), \widehat q_{\delta'}(z'))\in V_{z_3,\#}^q(Z)^2\times (L^{q'}_{0,\#}(Z'_f)\cap W^{1,q'}(Z'_f))$ for every $\delta'\in\mathbb{R}^2$ (this can be proven by adapting the proofs of \cite[Propositions 3.2 and 3.3]{Tapiero2}).   In fact, the velocity $\widehat { w}_{\delta'}$ is given by
\begin{equation}\label{expresionw}
\widehat {w}_{\delta'}(z)=-2\int_{{1\over 2}-z_3}^{1\over 2}{\xi\over \psi(2|\delta'+\nabla_{z'}\widehat q_{\delta'}(z')||\xi|)}d\xi\left(\delta'+\nabla_{z'}\widehat q_{\delta'}(z')\right),
\end{equation}
where $\psi$ is the inverse function of (\ref{taupsi}), which has a unique solution noted by $\zeta=\psi(\tau)$ for $\tau\in\mathbb{R}^+$ (see \cite[Proposition 3.3]{Tapiero2} for more details).

To prove this, we write (\ref{limit_model_local})$_1$ as follows
\begin{equation}\label{eqequiv}-\partial_{z_3}(\eta_r(\partial_{z_3}\widehat {w}_{\delta'})\partial_{z_3}\widehat {w}_{\delta'})=-2(\delta'+\nabla_{z'}\widehat q_{\delta'}(z')), \quad \eta_r(\xi')=(\eta_0-\eta_\infty)\left(1+{\lambda\over 2}|\xi'|^2\right)^{{r\over 2}-1}+\eta_\infty,\quad \xi'\in \mathbb{R}^2,
\end{equation}
we set $g(z')=-2(\delta'+\nabla_{z'}\widehat q_{\delta'}(z'))$ and integrating (\ref{eqequiv}) with respect to $z_3$, we deduce
\begin{equation}\label{eqequiv2}
\eta_r(\partial_{z_3}\widehat {w}_{\delta'})\partial_{z_3}\widehat {w}_{\delta'}=C(z')-z_3 g(z').
\end{equation}
Set $\Gamma(z)=C(z')-z_3 g(z')$ and $y=|\partial_{z_3}\widehat {w}_{\delta'}|$ and so,  by (\ref{eqequiv}) we have that
$$\left(1+{\lambda\over 2}y^2\right)^{{r\over 2}-1}={\eta_r(y)-\eta_\infty\over \eta_0-\eta_\infty},$$
i.e.,
\begin{equation}\label{eqequiv3}y=\sqrt{{2\over \lambda}\left\{\left({\eta_r(y)-\eta_\infty\over \eta_0-\eta_\infty}\right)^{2\over r-2}
-1\right\}}.
\end{equation}
Putting (\ref{eqequiv3}) in (\ref{eqequiv2}), we obtain 
$$\eta_r(y)\sqrt{{2\over \lambda}\left\{\left({\eta_r(y)-\eta_\infty\over \eta_0-\eta_\infty}\right)^{2\over r-2}
-1\right\}}=|\Gamma|,$$
which is the same equation as \eqref{taupsi} with $\zeta=\eta_r(y)$ and $\tau=|\Gamma|$.  Then,  we have $\eta_r(y)=\psi(|\Gamma|)$ and with (\ref{eqequiv2}), we deduce that:
$$\partial_{z_3}\widehat {w}_{\delta'}(z)={\Gamma(z)\over \psi(|\Gamma(z)|)}.$$ Integrating with respect to the vertical variable   from $0$ to $z_3$ we get
\begin{equation}\label{uexpresion}
\widehat {w}_{\delta'}(z)=\int_0^{z_3}{\Gamma(z',\zeta)\over \psi(|\Gamma(z',\zeta)|)}\,d\zeta=\int_0^{z_3}{C(z')-\zeta g(z')\over \psi(|C(z')-\zeta g(z')|)}\,d\zeta.
\end{equation}

As a consequence of the proof of uniqueness of the solution to~\eqref{limit_model_local} (see~\cite{Tapiero2}), since $\widehat {w}_{\delta'}$ vanishes on $z_3=1$, the constant $C(z')$ appearing in~\eqref{eqequiv2} is the only constant such that 
\begin{equation}\label{eqequiv4}\int_0^{1}{C(z')-\zeta g(z')\over \psi(|C(z')-\zeta g(z')|)}\,d\zeta=0.
\end{equation}
Hence, one has $C(z')={g(z')\over 2}$ because using this value of $C(z')$ and the change of variables $\xi={1\over 2}-\zeta$ in (\ref{eqequiv4}), we obtain
$$\int_0^{1}{g(z')({1\over 2}-\zeta)\over \psi(|g(z')||{1\over 2}-\zeta|)}\,d\zeta=g(z')\int_{-{1\over 2}}^{1\over 2}{\xi\over \psi(|g(z')||\xi|)}\,d\xi=0.$$
Therefore, we have obtained the expression
$$\widehat w_{\delta'}(z)=g(z')\int_{0}^{z_3}{{1\over 2}-\zeta\over \psi(|g(z')||{1\over 2}-\zeta|)}\,d\zeta=g(z')\int_{{1\over 2}-z_3}^{1\over 2}{\xi\over \psi(|g(z')||\xi|)}\,d\xi.$$
Finally, integrating $\widehat {w}_{\delta'}$ with respect to $z_3$ between $0$ and $1$ yields
$$\begin{array}{rl}\displaystyle 
 \int_0^1\widehat {w}_{\delta'}(z)\,dz_3= &\displaystyle g(z')\int_0^{1}\left(\int_{{1\over 2}-z_3}^{1\over 2}{\xi\over \psi(|g(y')||\xi|)}\,d\xi\right)\,dz_3\,\\
\noame
=&\displaystyle g(z')\int_{-{1\over 2}}^{1\over 2}{\xi\over \psi(|g(z')||\xi|)}\left(\int_{{1\over 2}-\xi}^{1}\,dy_z\right)\,d\xi,
\end{array}$$
that is., expression 
\begin{equation}\label{meanwhat}\ \int_0^1\widehat {w}_{\delta'}(z)\,dz_3=\displaystyle -2(\delta'+\nabla_{z'}\widehat q_{\delta'}(z'))\int_{-{1\over 2}}^{1\over 2}{({1\over 2}+\xi)\xi \over \psi(1|\delta'+\nabla_{z'}\widehat q_{\delta'}(z')||\xi|)}d\xi.
\end{equation}
 Because of the divergence-free condition ${\rm div}_{z'}(\int_{0}^{1}\widehat { w}_{\delta'}\,dz_3)=0$ in $Z'$, the pressure $\widehat q_{\delta'}$ satisfies limit law (\ref{expressionq}).\\

{\it Step 2. Identification of the filtration velocity (\ref{FiltrationVelcovcity}) and the Darcy problem (\ref{Reynolds_equation}).} Using the idea from \cite{Bourgeat1} to decouple the homogenized problem of Carreau type (\ref{limit_model_gamma_eq_1}), for every $\delta'\in\mathbb{R}^2$ we consider the function $\mathcal{U}:\mathbb{R}^2\to \mathbb{R}^2$ given by
\begin{equation}\label{permeabilityU}\mathcal{U}(\delta')=\int_{Z_f} \widehat { w}_{\delta'}(z)\,dz,
\end{equation}
where $\widehat {w}_{\delta'}(z)$ is the solution to system (\ref{limit_model_local}). Thus, $(\widehat { u}',\widehat \pi)$ takes the form
$$\widehat{ u}'(x',z)=\widehat { w}_{\nabla_{x'}\widetilde p(x')-{ f}'(x')}(z),\quad \widehat \pi(x',z)=\widehat q_{\nabla_{x'}\widetilde p(x')-{ f}'(x')}(z)\quad \hbox{in }\omega\times Z_f,$$
and then, from the relation $\widetilde V'(x')=\int_0^{1}\widetilde { u}'(x',z_3)\,dz_3=\int_{Z_f}\widehat { u}'(x',z)\,dz$ given in (\ref{relation2}), and considering that  $\widetilde V_3(x')=\int_0^{1}\widetilde u_3(x',z_3)\,dz_3=\int_{Z_f}\widehat u_3\,dz=0$, we deduce the filtration velocity:
$$\widetilde V'(x')=\int_0^{1}\widetilde u'(x',z_3)\,dz_3=\mathcal{U}(\nabla_{x'}\widetilde p(x')-{ f}'(x')),\quad \widetilde V_3\equiv 0,\quad \hbox{in }\omega.$$
Then, by using (\ref{permeabilityU}) together with (\ref{meanwhat}) with $\delta'=\nabla_{x'}\widetilde p(x')-{ f}'(x')$,  we deduce the expression for the filtration velocity (\ref{FiltrationVelcovcity}).

Finally, from conditions (\ref{limit_model_gamma_eq_1})$_{3,4}$, we deduce the Darcy problem for pressure (\ref{Reynolds_equation}).\\

{\it Step 3. The local problem for $\gamma\neq 1$.} We define  the following local problems for $\widehat w^i(z)\in V_{z_3,\#}^2(Z_f)$, $i=1,2$, the unique solution of
  \begin{equation}\label{local_problems_1}
\left\{\begin{array}{rl}
\displaystyle
-{\eta\over 2}\partial_{z_3}^2\widehat w^i+\nabla_{z'}\widehat q^i(z')=-e_i &\hbox{in }  Z_f,\\
\noame
\displaystyle
{\rm div}_{z'}(\widehat w^i)=0&\hbox{in }  Z_f,\\
\noame
\displaystyle \  \widehat w^i=0&\hbox{on } T,\\
\noame
\displaystyle \  \widehat w^i=0&\hbox{on } \widehat \Gamma_0\cup \widehat \Gamma_1,\\
\noame
\displaystyle \widehat \pi^i\in L^{2}_{0,\#}(Z'_f),
\end{array}\right.
\end{equation}
where $\{e_1,e_2\}$ is the canonical basis of $\mathbb{R}^2$. 
However, because local problems (\ref{local_problems_1}) can be solved, by means of integration and boundary conditions on $z_3=\{0,1\}$, we obtain:
$$\widehat w^i(z)={1\over \eta}(e_i+\nabla_{z'}\widehat q^i(z'))(z_3^2-z_3),$$
and so
\begin{equation}\label{meanw}\int_0^1\widehat w^i(z)\,dz_3=-{1\over 6\eta}(e_i+\nabla_{z'}\widehat q^i(z')).
\end{equation}
 By using the condition ${\rm div}_{z'}\left(\int_0^1\widehat w^i(z)dz_3\right)=0$ in $Z'_f$, $\widehat w^i$ can be explicitly obtained by means of functions $ \widehat q^i(z')$, $i=1,2$, which are the unique solution in $H^1_\#(Z'_f)$ of the local problem (\ref{local_problems_0}).\\

{\it Step 4. Identification of filtration velocities (\ref{FiltrationVelcovcity0}) and (\ref{Reynolds_equation0})}.  Following  the proof of Theorem 2.1-$(iii)$ in \cite{Anguiano_SG_Transition}, using identification
$$\widehat u(x',z)=\sum_{i=1}^2\left(f_i(x')-\partial_{x_i}\widetilde p(x')\right)\widehat w^i(z),\quad \widehat \pi(x',z)=\sum_{i=1}^2\left(f_i(x')-\partial_{x_i}\widetilde p(x')\right)\widehat q^i(z),$$
with $(\widehat w^i, \widehat q^i)$, $i=1,2$, the unique  solution of (\ref{local_problems_1}), 
we can deduce that 
$$\widetilde V'(x')= A(f'(x')-\nabla_{x'}\widetilde p(x')),\quad \widetilde V_3(x')\equiv 0,$$
where $A\in \mathbb{R}^{2\times 2}$ is symmetric and definite positive defined by its entries
$$A_{ij}=\int_{Z_f}\partial_{z_3}\widehat w^i(z)\cdot \partial_{z_3}\widehat w^j(z)\,dz=\int_{Z_f}\widehat w^i(z)\cdot e_j\,dz,\quad i=1, 2,\  j=1,2.$$
However, because $\widehat w^i$ can be expressed by means of functions $\widehat q^i$ (as seen in Step 3), we have
$$A_{ij}=\int_{Z_f}\widehat w^i(z)\cdot e_j\,dz=\int_{Z'_f}\left(\int_0^1\widehat w^i(z)\,dz_3\right)\cdot e_j\,dz',\quad i=1, 2,\  j=1,2.$$
Thus, using expression (\ref{meanw}), we can deduce expressions (\ref{FiltrationVelcovcity0}) for $\widetilde V$ and (\ref{permeabilityA0}) for $A$ where $\widehat q^i$, $i=1,2$ denotes the unique solution of (\ref{local_problems_0}).

  Finally, from the divergence conditions in (\ref{limit_model_gamma_neq_1}) and the expression for $\widetilde V'$, we deduce the equation~(\ref{Reynolds_equation0}) for the pressure, which is well-posed,
because it is simply a second-order elliptic equation.

\qed

\subsection{Limit models in case $r>2$ (dilatant fluids)}
\begin{theorem}[Case $r>2$]  \label{thm_dilatant} For $r>2$, the pair $(\widehat u, \widetilde p)$  given in Lemmas \ref{lem_conv_vel} and \ref{lem_conv_press} is the unique solution to a reduced two-pressure problem, which depends on the value of $\gamma$.
\begin{itemize}
\item If $\gamma< 1$, then $(\widehat u, \widetilde p)\in V_{z_3,\#}^2(\omega\times Z_f)^2\times (L^{2}_0(\omega)\cap H^1(\omega))$ satisfies problem (\ref{limit_model_gamma_neq_1}) with $\eta=\eta_0$.
 
 \item If $\gamma > 1$, then $(\widehat u, \widetilde p)\in V_{z_3,\#}^r(\omega\times Z_f)^2\times (L^{r'}_0(\omega)\cap W^{1,r'}(\omega))$ satisfies  the following reduced power-law Stokes problem
\begin{equation}\label{limit_model_gamma_sup_1_r}
\left\{\begin{array}{rl}
\displaystyle
-2^{-{r\over 2}}(\eta_0-\eta_\infty)\lambda^{r-2\over 2}\partial_{z_3}(|\partial_{z_3}\widehat u'|^{r-2}\partial_{z_3}\widehat { u}')  +\nabla_{z'}\widehat \pi(z')={ f}'(x')-\nabla_{x'}\widetilde p(x') &\hbox{in }\omega\times Z_f,\\
\noame
\displaystyle
{\rm div}_{z'}(\widehat { u}')=0&\hbox{in }\omega\times Z_f,\\
\noame
\displaystyle
{\rm div}_{x'}\left(\int_{Z_f}\widehat u'\,dz\right)=0&\hbox{in }\omega,\\
\noame
\displaystyle \left(\int_{Z_f}\widehat u'\,dz\right)\cdot n=0&\hbox{on }\partial\omega,
\\
\noame
\displaystyle \widehat u'=0&\hbox{on }  \omega\times T,\\
\noame
\displaystyle \widehat u'=0&\hbox{on } \omega\times (\widehat\Gamma_0\cup\widehat\Gamma_1),\\
\noame
\displaystyle \widehat \pi\in L^{r'}(\omega;L^{r'}_{0,\#}(Z'_f)).
\end{array}\right.
\end{equation}

\item If $\gamma = 1$, then $(\widehat u, \widetilde p)\in V_{z_3,\#}^r(\omega\times Z_f)^2\times (L^{r'}_0(\omega)\cap W^{1,r'}(\omega))$ satisfies  problem (\ref{limit_model_gamma_eq_1}).

\end{itemize}
\end{theorem}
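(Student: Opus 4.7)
The proof follows the same monotonicity-and-Minty strategy as in Theorem~\ref{thm_pseudoplastic}. Starting from the unfolded variational formulation~(\ref{form_var_hat}), I would insert a test function of the form $v_\eps=\alpha_\eps(\varphi-\beta_\eps\widehat u_\eps)$, where $\beta_\eps$ is the velocity normalization identified in Lemma~\ref{lem_conv_vel} (so that $\beta_\eps\widehat u_\eps\rightharpoonup \widehat u$) and $\alpha_\eps$ is selected so that, after division by $\alpha_\eps$, each term of the resulting inequality admits a meaningful $\eps\to 0$ limit. The monotonicity~(\ref{monotonicity}) of $A_r$ gives the requisite one-sided inequality, the limit is then taken using the convergences from Lemmas~\ref{lem_conv_vel}--\ref{lem_conv_press} together with $\eps^{1-\ell}\to 0$ (which collapses the operator $\eps^{1-\ell}\mathbb{D}_{z'}[\cdot]+\partial_{z_3}[\cdot]$ appearing in $A_r$ to $\partial_{z_3}[\cdot]$), Minty's lemma yields the equality, and the identities~(\ref{equatlitysimmetruic}) convert the tensorial form into the vectorial one stated in the theorem. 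Uniqueness in each case follows from the strict monotonicity of the corresponding limit operator and implies convergence of the whole sequence.

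For $\gamma<1$, the unfolded estimates~(\ref{estim_sol_hat1}) are dimensionally the $L^2$ ones, and the scalings $\alpha_\eps=\eps^{1-\gamma}$, $\beta_\eps=\eps^{\gamma-2}$ from the pseudoplastic proof apply verbatim. In this regime the Carreau factor satisfies $(1+\lambda\eps^{2(1-\gamma)}|\eps^{1-\ell}\mathbb{D}_{z'}[\varphi]+\partial_{z_3}[\varphi]|^2)^{r/2-1}\to 1$, so both the $(\eta_0-\eta_\infty)$ and the $\eta_\infty$ contributions survive and add up to $\eta_0$, giving back system~(\ref{limit_model_gamma_neq_1}) with $\eta=\eta_0$. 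For $\gamma=1$, the choice $\alpha_\eps=1$, $\beta_\eps=\eps^{-1}$ leaves the Carreau factor untouched, and the argument of Theorem~\ref{thm_pseudoplastic} gives~(\ref{limit_model_gamma_eq_1}); the only change is the functional framework, which is now of $W^{1,r'}$--$L^{r'}$ type in accordance with Lemma~\ref{lem_conv_press} applied with $q=r$.

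The principal difficulty, and the genuinely new regime, is $\gamma>1$, where the Carreau viscosity must be pushed into its large-argument power-law asymptotics. Matching the expansion $(1+\lambda\alpha_\eps^2|\cdot|^2)^{r/2-1}\sim \lambda^{(r-2)/2}\alpha_\eps^{r-2}|\cdot|^{r-2}$ with the prefactor $\eps^{\gamma-1}$ in~(\ref{form_var_hat}) forces the choices
\[
\alpha_\eps=\eps^{(1-\gamma)/(r-1)},\qquad \beta_\eps=\eps^{(\gamma-r)/(r-1)},
\]
the latter being consistent with the velocity rescaling of Lemma~\ref{lem_conv_vel}. Two algebraic balances are crucial: $\eps^{\gamma-1}\alpha_\eps^{r-1}=1$, which produces a finite non-trivial limit for the power-law contribution, and $\eps^{\gamma-1}\alpha_\eps=\eps^{(\gamma-1)(r-2)/(r-1)}\to 0$, which makes the Newtonian $\eta_\infty$ contribution vanish. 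Combining these with the divergence-free conditions on $\varphi$ (which eliminate the pressure terms exactly as in the pseudoplastic case), the strong convergence $\widehat p_\eps^0\to \widetilde p$ from Lemma~\ref{lem_conv_press}, and the identities~(\ref{equatlitysimmetruic}) that generate the factor $2^{-r/2}$, one arrives at the variational inequality
\begin{equation*}
2^{-r/2}(\eta_0-\eta_\infty)\lambda^{(r-2)/2}\!\!\int_{\omega\times Z_f}\!|\partial_{z_3}\varphi'|^{r-2}\partial_{z_3}\varphi'\cdot \partial_{z_3}(\varphi'-\widehat u')\,dx'dz\,\geq\, \int_{\omega\times Z_f}\!f'\cdot (\varphi'-\widehat u')\,dx'dz,
\end{equation*}
valid for every $\varphi$ in the $L^r$-analogue of the space~(\ref{Vspace2}). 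Minty's lemma and a density argument then yield~(\ref{limit_model_gamma_sup_1_r}), and strict monotonicity of the $r$-power-law operator (for $r>2$) ensures uniqueness. The main obstacle is therefore the identification of the scalings $\alpha_\eps,\beta_\eps$ and the verification of the two algebraic balance relations; once these are in hand, the limit passage in the remaining terms proceeds exactly as in the proof of Theorem~\ref{thm_pseudoplastic}.
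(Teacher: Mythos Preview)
Your proposal is correct and follows essentially the same approach as the paper: the same monotonicity--Minty scheme starting from~(\ref{form_var_hat}), the same choice of scalings $\alpha_\eps,\beta_\eps$ in each regime (in particular $\alpha_\eps=\eps^{(1-\gamma)/(r-1)}$, $\beta_\eps=\eps^{(\gamma-r)/(r-1)}$ for $\gamma>1$), and the same two algebraic balances $\eps^{\gamma-1}\alpha_\eps^{r-1}=1$ and $\eps^{\gamma-1}\alpha_\eps\to 0$ that isolate the power-law term and kill the $\eta_\infty$ contribution. The only cosmetic difference is that the paper first writes the limiting inequality in the tensorial form $\partial_{z_3}[\cdot]$ and then invokes~(\ref{equatlitysimmetruic}) to obtain the $2^{-r/2}$ factor, whereas you fold this conversion into the statement of the inequality.
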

\begin{proof}
We divide the proof into three steps. 
\\

{\it Step 1. Case $\gamma<1$}. The proof of (\ref{limit_model_gamma_neq_1}) for $r>2$ and $\gamma<1$ is similar to the proof of Theorem \ref{thm_pseudoplastic} in case $1<r<2$ and $\gamma\neq 1$. We present only the main steps:
\begin{enumerate}
\item We deduce the variational inequality (\ref{Proof_2}) for every  $\varphi\in \mathcal{D}(\omega; C^\infty_{\#}(Z_f)^3)$ with $\varphi_3\equiv 0$, $\varphi'=0$ on $\omega\times T$ and  $\varphi'=0$ on $\omega\times (\widehat \Gamma_0\cup \widehat \Gamma_1)$, satisfying the divergence conditions ${\rm div}_{x'}(\int_{Z_f} \varphi')\,dz=0$ in $\omega$, $(\int_{Z_f}\varphi'\,dz )\cdot n=0$ on $\partial\omega$, and ${\rm div}_{z'}(\varphi')=0$ in $\omega\times Z_f$. Here, $|O_\ep|\leq C\ep^\ell$, which tends to zero when $\ep$ tends to zero.

\item We can pass to the limit in every term of (\ref{Proof_2}) when $\ep$ tends to  zero. The proof is similar to that given for case $1<r<2$ and $\gamma\neq 1$,  considering convergences~(\ref{conv_vel_tilde2}) and (\ref{conv_vel_hat2}). We notice that because $\gamma<1$, $\lambda \ep^{2(1-\gamma)}$ tends to zero, and thus, since $r>2$, it holds that 
$$(1+\lambda\ep^{2(1-\gamma)}|\varepsilon^{1-\ell} \mathbb{D}_{z'}[  \varphi'] + \partial_{z_3}[ \varphi']|^2)^{{r\over 2}-1}\to 1.
$$
As a result, the first and second terms of (\ref{Proof_2}) converge to 
$$\eta_0\int_{\omega\times Z_f}\partial_{z_3}[\varphi']:\partial_{z_3}[\varphi'-\widehat u']\,dx'dz.$$

\item We conclude that $(\widehat u, \widetilde p)$ satisfies the variational formulation 
$$\eta_0\int_{\omega\times Z_f}\partial_{z_3}[\widehat u']:\partial_{z_3}[v']\,dx'dz=\int_{\omega\times Z_f}f'\cdot v'\,dx'dz,$$
for every function $v'$ in Hilbert space $\mathbb{V}_2$ defined by (\ref{Vspace2}). This variational formulation is equivalent to problem (\ref{limit_model_gamma_neq_1}) with $\eta=\eta_0$.

\end{enumerate}

{\it Step 2. Case $\gamma>1$}. Conditions (\ref{limit_model_gamma_sup_1_r})$_{2,3,4,5}$ follow from Lemma \ref{lem_conv_vel}. To prove that $(\widehat u, \widetilde p)$ satisfies the momentum equation (\ref{limit_model_gamma_sup_1_r})$_{1}$, we follow the lines of the proof to obtain (\ref{Proof_2}), but choose now $v_\ep$ and $\varphi$ such that 
$$v_\ep=\ep^{1-\gamma\over r-1}\Big(\varphi-\ep^{\gamma-r\over r-1}\widehat u_\ep\Big),$$
 with $\varphi\in \mathcal{D}(\omega; C^\infty_{\#}(Z_f)^3)$ with $\varphi_3\equiv 0$, $\varphi'=0$ on $\omega\times T$ and  $\varphi'=0$ on $\omega\times (\widehat \Gamma_0\cup \widehat \Gamma_1)$, satisfying the divergence conditions ${\rm div}_{x'}(\int_{Z_f} \varphi')\,dz=0$ in $\omega$, $(\int_{Z_f}\varphi'\,dz )\cdot n=0$ on $\partial\omega$, and ${\rm div}_{z'}(\varphi')=0$ in $\omega\times Z_f$. Then, we get
\begin{equation}\label{Proof_1_super}\begin{array}{l}
\displaystyle
\medskip
\ep^{\gamma-1+2{1-\gamma\over r-1}}(\eta_0-\eta_\infty)\int_{\omega\times Z_f}(1+\lambda\ep^{2{1-\gamma\over r-1}}|\varepsilon^{1-\ell} \mathbb{D}_{z'}[  \varphi'] + \partial_{z_3}[ \varphi']|^2)^{{r\over 2}-1}(\varepsilon^{1-\ell} \mathbb{D}_{z'}[  \varphi'] + \partial_{z_3}[ \varphi'])\\
\noame
 \qquad\qquad\qquad\qquad\displaystyle :(\varepsilon^{1-\ell} \mathbb{D}_{z'}[  \varphi'-\ep^{{\gamma-r\over r-1}}\widehat u_\ep'] + \partial_{z_3}[ \varphi'-\ep^{\gamma-r\over r-1}\widehat u_\ep'])dx'dz\\
\noame
\medskip
\displaystyle+ \ep^{\gamma-1+2{1-\gamma\over r-1}}\eta_\infty\int_{\omega\times Z_f}(\varepsilon^{1-\ell} \mathbb{D}_{z'}[  \varphi'] + \partial_{z_3}[ \varphi']):(\varepsilon^{1-\ell} \mathbb{D}_{z'}[   \varphi'-\ep^{{\gamma-r\over r-1}}\widehat u_\ep'] + \partial_{z_3}[  \varphi'-\ep^{{\gamma-r\over r-1}}\widehat u_\ep'])dx'dz\\
\medskip
\displaystyle+\ep^{1-\gamma\over r-1}\int_{\omega\times Z_f} \widehat p_\varepsilon^0\,{\rm div}_{x'}( \varphi'-\ep^{{\gamma-r\over r-1}}\widehat u_\ep')\,dx'dz  \ge\ep^{1-\gamma\over r-1}\int_{\omega\times Z_f} { f}'\cdot (  \varphi'-\ep^{{\gamma-r\over r-1}}\widehat u_\ep')\,dx'dz+O_\varepsilon,
\end{array}
\end{equation}
where $|O_\ep|\leq C\ep^{\ell+{1-\gamma\over r-1}}$. Dividing by $\ep^{1-\gamma\over r-1}$, we deduce the inequality  
\begin{equation}\label{Proof_2_super}\begin{array}{l}
\displaystyle
\medskip
\ep^{(\gamma-1){r-2\over r-1}}(\eta_0-\eta_\infty)\int_{\omega\times Z_f}(1+\lambda\ep^{2{1-\gamma\over r-1}}|\varepsilon^{1-\ell} \mathbb{D}_{z'}[  \varphi'] + \partial_{z_3}[ \varphi']|^2)^{{r\over 2}-1}(\varepsilon^{1-\ell} \mathbb{D}_{z'}[  \varphi'] + \partial_{z_3}[ \varphi'])\\
\noame
 \qquad\qquad\qquad\qquad\displaystyle :(\varepsilon^{1-\ell} \mathbb{D}_{z'}[  \varphi'-\ep^{{\gamma-r\over r-1}}\widehat u_\ep'] + \partial_{z_3}[ \varphi'-\ep^{\gamma-r\over r-1}\widehat u_\ep'])dx'dz\\
\noame
\medskip
\displaystyle+ \ep^{(\gamma-1){r-2\over r-1}}\eta_\infty\int_{\omega\times Z_f}(\varepsilon^{1-\ell} \mathbb{D}_{z'}[  \varphi'] + \partial_{z_3}[ \varphi']):(\varepsilon^{1-\ell} \mathbb{D}_{z'}[   \varphi'-\ep^{{\gamma-r\over r-1}}\widehat u_\ep'] + \partial_{z_3}[  \varphi'-\ep^{{\gamma-r\over r-1}}\widehat u_\ep'])dx'dz\\
\medskip
\displaystyle+ \int_{\omega\times Z_f} \widehat p_\varepsilon^0\,{\rm div}_{x'}( \varphi'-\ep^{{\gamma-r\over r-1}}\widehat u_\ep')\,dx'dz  \ge \int_{\omega\times Z_f} { f}'\cdot (  \varphi'-\ep^{{\gamma-r\over r-1}}\widehat u_\ep')\,dx'dz+O_\varepsilon,
\end{array}
\end{equation}
where $|O_\ep|\leq C\ep^\ell$, which tends to zero when $\ep$ tends to zero. Because $\gamma>1$ and $r>2$, we have $(\gamma-1){r-2\over r-1}>0$ and from 
$$
(1+\lambda\ep^{2{1-\gamma\over r-1}}|\varepsilon^{1-\ell} \mathbb{D}_{z'}[  \varphi'] + \partial_{z_3}[ \varphi']|^2)^{{r\over 2}-1}=(\ep^{2{\gamma-1\over r-1}}+\lambda|\varepsilon^{1-\ell} \mathbb{D}_{z'}[  \varphi'] + \partial_{z_3}[ \varphi']|^2)^{{r\over 2}-1},
$$
and convergences (\ref{conv_vel_tilde_r_super}) and (\ref{conv_vel_hat_r_super}), passing to the limit in (\ref{Proof_1_super}) when $\ep$ tends to zero, we deduce that
\begin{equation}\label{Proof_3_super}\begin{array}{l}
\displaystyle
\medskip
 (\eta_0-\eta_\infty)\lambda^{r-2\over 2}\int_{\omega\times Z_f}  | \partial_{z_3}[ \varphi']|^{r-2}\partial_{z_3}[ \varphi'] : \partial_{z_3}[ \varphi'-\widehat u']\,dx'dz\\
\noame
\displaystyle+ \int_{\omega\times Z_f} \widetilde p\,{\rm div}_{x'}( \varphi'-\widehat u')\,dx'dz  \ge \int_{\omega\times Z_f} { f}'\cdot (  \varphi'- \widehat u')dx'dz.
\end{array}
\end{equation}
Because $\widetilde p$ does not depend on $z$, from the divergence conditions ${\rm div}_{x'}\int_{Z_f}\varphi'dz=0$ and ${\rm div}_{x'}\int_{Z_f}\widehat u'dz=0$, we deduce that
$$\int_{\omega\times Z_f} \widetilde p\,{\rm div}_{x'}( \varphi'-\widehat u')\,dx'dz=0.$$
As a result, we obtain the following variational inequality:
\begin{equation}\label{Proof_3_super}\begin{array}{l}
\displaystyle
\medskip
 (\eta_0-\eta_\infty)\lambda^{r-2\over 2}\int_{\omega\times Z_f}  | \partial_{z_3}[ \varphi']|^{r-2}\partial_{z_3}[ \varphi'] : \partial_{z_3}[ \varphi'-\widehat u']\,dx'dz  \ge \int_{\omega\times Z_f} { f}'\cdot (  \varphi'- \widehat u')dx'dz.
\end{array}
\end{equation}
Using Minty's lemma and a density argument, we conclude that the equality 
\begin{equation}\label{Proof_4_super}\begin{array}{l}
\displaystyle
\medskip
 (\eta_0-\eta_\infty)\lambda^{r-2\over 2}\int_{\omega\times Z_f}  | \partial_{z_3}[ \widehat u]|^{r-2}\partial_{z_3}[ \widehat u'] : \partial_{z_3}[ v']\,dx'dz  = \int_{\omega\times Z_f} { f}'\cdot v'\,dx'dz,
\end{array}
\end{equation}
is valid for every $v'$ in the Banach space $\mathbb{V}_r$ defined by
\begin{equation}\label{Vspacer}
\mathbb{V}_r=\left\{\begin{array}{l}
v'(x',z)\in V_{z_3,\#}^r(\omega\times Z_f)^2\quad\hbox{such that } \\
\noame
\displaystyle
{\rm div}_{x'}\left(\int_{Z_f}v'(x',z)\,dz\right)=0\quad\hbox{in }\omega,\quad \left(\int_{Z_f}v'(x',z)\,dz\right)\cdot n=0\quad\hbox{on }\partial\omega,\\
\noame
\displaystyle
{\rm div}_{z'}(v')=0\quad\hbox{in }\omega\times Z_f,\quad v'(x',z)=0\quad\hbox{in }(\omega\times T)\hbox{ and }\omega\times (\widehat \Gamma_0\cup\widehat \Gamma_1)
\end{array}\right\}.
\end{equation}
Reasoning as in \cite{Anguiano_Bonn_SG_Carreau2}, and taking into account (\ref{equatlitysimmetruic}), we can deduce that the limit variational formulation (\ref{Proof_4_super}) is equivalent to problem (\ref{limit_model_gamma_sup_1_r}), which by \cite{Tapiero}, it can be proven that it admits a unique solution $(\widehat u', \widehat \pi, \widetilde p)\in V_{z_3,\#}^r(\omega\times Z_f)^2\times L^{r'}(\omega;L^{r'}_{0,\#}(Z'_f))\times (L^{r'}_0(\omega)\cap W^{1,r'}(\omega))$, hence the entire sequence $(\widehat u_\ep, \widehat p_\ep^0)$ converges to $(\widehat u', \widetilde p)$.
\\

{\it Step 3. Case $\gamma=1$}.  Proceeding similarly to Step 2, but taking $\gamma=1$, we obtain:
\begin{equation}\label{Proof_1_super_eq}\begin{array}{l}
\displaystyle
\medskip
 (\eta_0-\eta_\infty)\int_{\omega\times Z_f}(1+\lambda|\varepsilon^{1-\ell} \mathbb{D}_{z'}[  \varphi'] + \partial_{z_3}[ \varphi']|^2)^{{r\over 2}-1}(\varepsilon^{1-\ell} \mathbb{D}_{z'}[  \varphi'] + \partial_{z_3}[ \varphi'])\\
\noame
 \qquad\qquad\qquad\qquad\displaystyle :(\varepsilon^{1-\ell} \mathbb{D}_{z'}[  \varphi'-\ep^{-1}\widehat u_\ep'] + \partial_{z_3}[ \varphi'-\ep^{-1}\widehat u_\ep'])dx'dz\\
\noame
\medskip
\displaystyle+  \eta_\infty\int_{\omega\times Z_f}(\varepsilon^{1-\ell} \mathbb{D}_{z'}[  \varphi'] + \partial_{z_3}[ \varphi']):(\varepsilon^{1-\ell} \mathbb{D}_{z'}[   \varphi'-\ep^{-1}\widehat u_\ep'] + \partial_{z_3}[  \varphi'-\ep^{-1}\widehat u_\ep'])dx'dz\\
\medskip
\displaystyle+ \int_{\omega\times Z_f} \widehat p_\varepsilon^0\,{\rm div}_{x'}( \varphi'-\ep^{-1}\widehat u_\ep')\,dx'dz \ge \int_{\omega\times Z_f} { f}'\cdot (  \varphi'-\ep^{-1}\widehat u_\ep')\,dx'dz+O_\varepsilon,
\end{array}
\end{equation}
where $|O_\ep|\leq C\ep^\ell$.  Passing to the limit  in (\ref{Proof_1_super_eq}) by using convergences (\ref{conv_vel_tilde_r_eq}) and (\ref{conv_vel_hat_r_eq}) when $\ep$ tends to zero, we deduce that
\begin{equation}\label{Proof_3_super_eq}\begin{array}{l}
\displaystyle
\medskip
 (\eta_0-\eta_\infty)\int_{\omega\times Z_f}(1+\lambda| \partial_{z_3}[ \varphi']|^2)^{{r\over 2}-1}\partial_{z_3}[ \varphi'] :\partial_{z_3}[ \varphi'- \widehat u']dx'dz\\
\noame
\medskip
\displaystyle+  \eta_\infty\int_{\omega\times Z_f} \partial_{z_3}[ \varphi']: \partial_{z_3}[  \varphi'- \widehat u']\,dx'dz\\
\medskip
\displaystyle+ \int_{\omega\times Z_f} \widehat p\,{\rm div}_{x'}( \varphi'- \widehat u')\,dx'dz  \ge \int_{\omega\times Z_f} { f}'\cdot (  \varphi'- \widehat u')\,dx'dz.
\end{array}
\end{equation}
We remark that the rest of the proof to derive (\ref{limit_model_gamma_eq_1}) is the same as the proof of Theorem \ref{thm_pseudoplastic} in case $1<r<2$ and $\gamma=1$, taking into account that here the exponent of the Sobolev space is $q=r$ (i.e., $(\widehat u', \widehat \pi, \widetilde p)\in V_{z_3,\#}^r(\omega\times Z_f)^2\times L^{r'}(\omega;L^{r'}_{0,\#}(Z'_f))\times (L^{r'}_0(\omega)\cap W^{1,r'}(\omega))$), so we omit it.

\end{proof}

Finally, we use Theorem~\ref{thm_dilatant} to derive the expression for filtration velocity $\widetilde V$ given in Theorem~\ref{expresiones_finalesrsup2}.

\emph{Proof of Theorem~\ref{expresiones_finalesrsup2}.} The proof of cases $\gamma<1$ and $\gamma=1$ is similar to that of case $1<r<2$, so we omit it. 

The proof of case $\gamma>1$ is similar to the proof of Theorem 3.1 (case $a_\ep\geq \ep$) in~\cite{Anguiano_SG}, so we only provide some details. By means of identification 
$$\widehat u(x',z)=\widehat w_{f'(x')-\nabla_{x'}\widetilde p(x')},\quad \widehat \pi(x',z)=\widehat q_{f'(x')-\nabla_{x'}\widetilde p(x')},$$
with $\widehat w_{\delta'}$, for every $\delta'\in\mathbb{R}^2$, the unique solution in $W^{1,r}_{\#}(Z_f)^2$ of the local reduced power-law Stokes problem 
\begin{equation}\label{limit_model_local_power}
\left\{\begin{array}{rl}
\displaystyle
-2^{-{r\over 2}}(\eta_0-\eta_\infty)\lambda^{r-2\over 2}\partial_{z_3}(|\partial_{z_3} \widehat {  w}_{\delta'}|^{r-2}\partial_{z_3}\widehat  {  w}_{\delta'})   +\nabla_{z'}\widehat q_{\delta'}(z')=-\delta' &\hbox{in }  Z_f,\\
\noame
\displaystyle
{\rm div}_{z'}(\widehat {  w}_{\delta'})=0&\hbox{in }  Z_f,\\
\noame
\displaystyle \widehat { w}_{\delta'}=0&\hbox{on } \widehat\Gamma_0\cup\widehat\Gamma_1,
\\
\noame
\displaystyle \widehat { w}_{\delta'}=0&\hbox{in }  T,\\
\noame
\displaystyle \widehat q_{\delta'}\in L^{r'}_{0,\#}(Z'_f),
\end{array}\right.
\end{equation}
we can obtain that
\begin{equation}\label{Vpower}
\widetilde V'(x')= \mathcal{U}(f'(x')-\nabla_{x'}\widetilde p(x')),\quad\widetilde V_3(x')=0,\quad \hbox{in }\omega,
\end{equation}
where the permeability function $\mathcal{U}:\mathbb{R}^2\to \mathbb{R}^2$ is defined by
\begin{equation}\label{permeabilitypower}
\mathcal{U}(\delta')=\int_{Z_f}\widehat w_{\delta'}(z)\,dz,\quad \forall\delta'\in\mathbb{R}^2.
\end{equation}

However, because problem (\ref{limit_model_local_power}) can be solved, we can provide a more explicit expression for the filtration velocity. Using~\cite[Proposition 3.4]{Tapiero}, we obtain: 
$$\widehat w_{\delta'}(z)=-{2^{r'\over 2}\over \lambda^2 (\eta_0-\eta_\infty)^{r'-1} r'}\left({1\over 2^{r'}}-\left|{1\over 2}-z_3\right|^{r'}\right)\left|\delta'+\nabla_{z'}\widehat q_{\delta'}\right|^{r'-2}(\delta'+\nabla_{z'}\widehat q_{\delta'}),
$$
A simple calculation shows  
\begin{equation}\label{intpower}\int_0^1\widehat w_{\delta'}(z)dz_3=-{1\over \lambda^2 (\eta_\infty-\eta_0)^{r'-1}2^{r'\over 2}( r'+1)}|\delta'+\nabla_{z'}\widehat q_{\delta'}|^{r'-2}(\delta'+\nabla_{z'}\widehat q_{\delta'}), 
\end{equation}
and so, from the condition ${\rm div}_{z'}(\int_0^1\widehat w_{\delta'}\,dz_3)=0$ in $Z_f'$,  we deduce that $\widehat q_{\delta'}$, $\delta'\in\mathbb{R}^2$ is the solution of (\ref{Reynolds_power}). 

From expressions (\ref{intpower}),  (\ref{Vpower}) and (\ref{permeabilitypower}), we deduce expression (\ref{FiltrationVelcovcity0r}) for $\widetilde V'$ and expression~(\ref{permeabilityA0r}) for $\mathcal{U}$, which is monotone and coercive (see Remark 3.2. in \cite{Anguiano_SG}). Finally, from the divergence conditions in (\ref{limit_model_gamma_sup_1_r}) and the expression for $\widetilde V'$, we deduce equation~(\ref{Reynolds_equation0r2}) for the pressure.

\qed

%%% Agradecimientos
\section*{Acknowledgments}
The authors would like to thank Guy Bayada for suggesting this study on very thin porous media. M.~Bonnivard was partially supported by the ANR Project Stoiques (ANR-24-CE40-2216).  M.~Anguiano belongs to the ``Mathematical Analysis" Research Group (FQM104) at Universidad de Sevilla.

%%% Conflict of interest
\section*{Conflict of interest declaration}
The authors declare no conflicts of interest.
%On behalf of all authors, the corresponding author states that there is no conflict of interest. 
 
 %%% Data availaility statemen
 %\section*{Data availability statement}
%Data sharing not applicable to this article as no datasets were generated or analysed during the current study.

\end{document}